\theoremstyle{plain}
\newtheorem{theorem}{Theorem}[section]
\newtheorem*{theorem*}{Theorem}
\newtheorem{corollary}[theorem]{Corollary}
\newtheorem{lemma}[theorem]{Lemma}
\newtheorem{proposition}[theorem]{Proposition}
\theoremstyle{definition}
\theoremstyle{remark}
\newtheorem{remark}[theorem]{Remark}
\numberwithin{equation}{section}
\newcommand{\sd}{\mathbin{\triangle}}
\newcommand{\grass}[2]{\mathbf{G}(#1,#2)}
\newcommand{\orthg}{\mathbf{O}}
\newcommand{\oball}{\mathbf{U}}
\newcommand{\cball}{\mathbf{B}}
\newcommand{\HM}{\mathcal{H}}
\newcommand{\HD}{{d_{\HM}}}
\newcommand{\size}{\mathsf{Size}} 
\newcommand{\mass}{\mathsf{M}} 
\newcommand{\fn}{\mathsf{W}} 
\newcommand{\RC}{\mathscr{R}} 
\newcommand{\IC}{\mathbf{I}} 
\newcommand{\NC}{\mathscr{N}} 
\newcommand{\PC}{\mathscr{P}} 
\newcommand{\FC}{\mathscr{F}} 
\newcommand{\LC}{\mathscr{L}} 
\DeclareMathOperator{\spt}{spt}
\DeclareMathOperator{\id}{id}
\DeclareMathOperator{\Tan}{Tan}
\DeclareMathOperator{\Lip}{Lip}
\DeclareMathOperator{\dist}{dist}
\DeclareMathOperator{\diam}{diam}
\DeclareMathOperator{\ap}{ap} 
\DeclareMathOperator{\intr}{int} 
\newcommand{\scale}[1]{\boldsymbol{\mu}_{#1}}
\newcommand{\trans}[1]{\boldsymbol{\tau}_{#1}}
\newcommand{\gm}[2]{\boldsymbol{\gamma}_{#1,#2}}
\newcommand{\mr}{\mathop{\vrule height 1.6ex depth 0pt width
0.13ex\vrule height 0.13ex depth 0pt width 1.3ex}\nolimits}
\newcommand{\Hom}{\mathsf{H}}
\newcommand{\cech}{\check{\mathsf{H}}}
\newcommand{\cc}{\check{\mathscr{C}}}
\newcommand{\ch}{\mathscr{C}_{\mathsf{H}}}
\newcommand{\ora}[1]{{\scriptstyle\xrightarrow{#1}}}
\begin{document}
\title{On the equality between the infimum obtained by solving various Plateau's
problem}
\author{Yangqin Fang \and Vincent Feuvrier \and Chunyan Liu}

\renewcommand{\thefootnote}{\fnsymbol{footnote}} 
\footnotetext{\emph{2010 Mathematics Subject Classification:} 49J99,
49Q20, 55N35.}
\footnotetext{\emph{Key words:} Plateau's problem; currents; flat $G$-chains;
Hausdorff measure; integrands; size minimization}
\renewcommand{\thefootnote}{\arabic{footnote}} 

\date{}
\maketitle
\begin{abstract}
	In this paper we will compare the Plateau's problem with \v{C}ech and
	singular homological boundary conditions, we also compare these with the size
	minimizing problem for integral currents with a given boundary. Finally we
	get the agreement on the infimum values for these Plateau's problem.
\end{abstract}

\section{Introduction and notation} 
Plateau's problem is to show the existence of minimal {\em surfaces} with a
given boundary. The classical form of the problem was solved independently by
J. Douglas \cite{Douglas:1931} and T. Rad\'o \cite{Rado:1930}, in which the
surfaces are understood as parametrizations of the unit disc in
$\mathbb{R}^3$, and such surfaces are analytic which is proved by Osserman
\cite{Osserman:1970}. More general settings for the problem have been
developed by many authors in various ways
\cite{FF:1960,Reifenberg:1960,David:2012}, in which the solutions are more
like models of soap films. The variation arises from the different
understanding  of surfaces. 

Integral currents, introduced by Federer and    Fleming \cite{FF:1960}, are
considered as an excellent generalization of surfaces, since they possess a
canonical boundary operator, and soap films could be considered as size
minimal integral currents. The existence of mass minimal (integral or
rectifiable) currents follows from a compactness theorem, see  \cite{FF:1960}.
However, the existence of size minimal integral or rectifiable currents with
given boundary is still open. A special case was solved by Morgan
\cite{Morgan:1989}. 

Surfaces could be directly understood as point sets, but 
one difficulty arise about how to understand the boundary. Reifenberg
\cite{Reifenberg:1960} proposed Plateau's problem with {\v C}ech homological 
boundary conditions. That is, given a  compact set $B$ and an abelian group 
$G$, a compact set $E\supseteq B$ is called a set with algebraic
boundary containing $L\subseteq \cech_{d-1}(B;G)$ (or called spanning $L$) if 
$\cech_{d-1} (i_{B,E})(L)=0$, where we denote by $\cech_{d-1}(i_{B,E})$ the 
natural homomorphism induced by the inclusion mapping $B\to E$, and in this
paper we always assume that $d$ is a positive integer which does not exceed 
the dimension of ambient euclidean space $\mathbb{R}^n$. Then we consider the 
problem of minimizing the Hausdorff measure $\HM^d(E)$ in the class consist of
all compact sets whose algebraic boundary contains $L$.  
Reifenberg \cite{Reifenberg:1960} got the first existence result for the
minimizing problem under the assumption
that $B$ is a $(d-1)$-dimensional compact set and $G$ is a compact abelian group.
Under a more general setting, Almgren \cite{Almgren:1968} claimed (with 
vague proof) the existence with Vietoris homology and any abelian group $G$ 
to minimize the integral of an elliptic integrand. And recently, a proof in
full detail was presented in \cite{FS:2018}, which indeed inherit Almgren's ideas.
De Pauw \cite{Pauw:2009} solved the minimization problem in the case that $B$ is a smooth Jordan curve in
$\mathbb{R}^3$ with $G=\mathbb{Z}$ in a totally different way. We obtained in
\cite{Fang:2013} a general existence result with a wild class of integrands 
which also implies the existence of solutions to the Plateau Problem with 
\v{C}ech homology conditions, and the proof is based on lots of work of the second
author \cite{Feuvrier:2008}. We can also replace the {\v C}ech homology with
other homology $\Hom_{\ast}$ in the minimization problem, in fact there are 
abundant existence results to Plateau's
problem with \v Cech homology, but very little with other homology. For
convenience, we call all such kinds of minimization problems the Plateau's problems
with homological boundary conditions. 

In this paper, we will show that the infimum values for the Plateau's problem
with the homological boundary conditions coincide if the homologies satisfy 
the seven axioms of Eilenberg and Steenrod \cite{ES:1945,ES:1952} and the 
additivity axiom \cite{Milnor:1962}, and that 
value also coincides with the infimum value in the corresponding size minimizing
problem for integral currents when $G=\mathbb{Z}$, see Theorem \ref{thm:hequ}
and Corollary \ref{co:scequ} for details. Let us introduce some notation. A 
real-valued function on a metric space is called of class Baire 1, if it is a 
pointwise limit of a sequence of continuous functions. For any set $E\subseteq
\mathbb{R}^n$, the $d$-dimensional Hausdorff measure $\HM^d(E)$ is defined by  
\[
	\HM^d(E)=\lim_{\delta\to 0}\inf\left\{\sum \diam(A_i)^d: E\subseteq
	\bigcup_{i=1}^{\infty} A_i, A_i\subseteq \mathbb{R}^n,\diam(A_i)\leq \delta\right\}.
\]
For any $d$-plane $T$ in $\mathbb{R}^n$, we denote by $T_{\natural}$ the
orthogonal projection of $\mathbb{R}^n$ onto $T$, and by $T_{\natural}^{\perp}$ 
the mapping $\id-T_{\natural}$. Let $\grass{n}{d}$ be the Grassmannian manifold, 
which consists of $d$-dimensional subspace in $\mathbb{R}^{n}$ equipped with 
the metric $d(T,P)=\|T_{\natural}-P_{\natural}\|$ for any $T,P\in \grass{n}{d}$. 
We denote by $\gm{n}{d}$ the $\orthg(n)$-invariant probability measure on
$\grass{n}{d}$. An integrand 
is a $\HM^d$-measurable function $F:\mathbb{R}^n\times \grass{n}{d}\to 
(0,+\infty)$, and we call it bounded above if $ \sup F<+\infty$.
For any $\HM^d$-measurable set $E\subseteq \mathbb{R}^n$ with $\HM^d(E)<+\infty$, we
can write $E=E_{irr} \sqcup E_{rec}$, where $E_{rec}$ and $E_{irr}$ are 
rectifiable and purely unrectifiable respectively, and both of them are 
$\HM^d$-measurable. For any set function 
$\lambda:2^{\mathbb{R}^n}\to \mathbb{R}$, $\lambda$ is called positive if
$\lambda(E)\geq 0$ for any $E\subseteq \mathbb{R}^n$, $\lambda$ is called
bounded above if $\lambda(E)\leq b<\infty$ for some $b\in (0,+\infty)$ and
all $E\subseteq \mathbb{R}^n$. Given any integrand $F$, set function
$\lambda$ and $\HM^d$-measurable $d$-set $E\subseteq \mathbb{R}^n$, we define 
$\Phi_{F,\lambda}(E)$ by
\[
	\Phi_{F,\lambda}(E)=\int_{x\in
	E_{rec}}F(x,\Tan(E,x))d\HM^d(x)+\lambda(E_{irr})\HM^d(E_{irr}).
\]
In particular, if $F\equiv 1$ and $\lambda \equiv 1$, then $\Phi_{F,\lambda}
=\HM^d$. Let $G$ be an abelian group, $\Hom_{\ast}$ be a homology. Let
$B\subseteq \mathbb{R}^n$ be a compact set, and $L\subseteq \Hom_{d-1}(B;G)$
be a subgroup. Let $f:\mathbb{R}^n\to \mathbb{R}^n$ be any continuous mapping,
we denote by $\Hom_{\ast}(f)$ the homomorphism induced by $f$. We denote by
$\mathscr{C}_{\Hom}(B,G,L)$ the collection of all  compact subsets in
$\mathbb{R}^{n}$ which span $L$ in homology $\Hom_{\ast}$,  for the \v{C}ech
homology, we will simply denote it by $\cc(B,G,L)$.  A set $X\subseteq
\mathbb{R}^n$ is called a Lipschitz neighborhood retract if there  is an open
set $U\supseteq X$ and a Lipschitz mapping $\varphi:U\to X$ such that
$\varphi\vert_{X}=\id_X$; furthermore, if $X$ is compact, we will call it a
compact Lipschitz neighborhood retract. Then we have the following theorem:
\begin{theorem} \label{thm:hequ}
	Let $B_0\subseteq \mathbb{R}^n$ be a compact Lipschitz neighborhood retract,
	and $\rho:U\to B_0$ be a Lipschitz retraction. 
	Let $G$ be any abelian group, and $\Hom_{\ast}$ be any homology 
	satisfying that 
	\begin{itemize}[topsep=0pt,noitemsep]
		\item $\Hom_{d-1}(\id)=\id$;
		\item $\Hom_{d-1}(g\circ f)=\Hom_{d-1}(g)\circ
			\Hom_{d-1}(f)$ for any Lipschitz mappings
			$f:X\to Y$ and $g:Y\to Z$;
		\item $\Hom_{d-1}(f)=\Hom_{d-1}(g)$ for any two homotopic Lipschitz 
			mappings $f,g:\mathbb{R}^n\to \mathbb{R}^n$;
		\item there is an isomorphism $h_0:\Hom_{d-1}(B_0;G)
			\to \cech_{d-1}(B_0;G)$, and for any polyhedron $E\supseteq B_0$ there
			is an isomorphism $h_{E}:\Hom_{d-1}(E;G)
			\to \cech_{d-1}(E;G)$ such that $h_E\circ \Hom_{d-1}(i_{B_0,E})=\cech_{d-1}
			(i_{B_0,E})\circ h_0$, and if $E\subseteq U$ then $h_0\circ\Hom_{d-1}
			(\rho\vert_E)= \cech_{d-1}(\rho\vert_E)\circ h_E$.
	\end{itemize}
	Let $F$ be any integrand of class Baire 1 bounded above, and $\lambda$ be any
	positive set function bounded above. Suppose that $L$ is a subgroup of 
	$\Hom_{d-1}(B_0;G)$, $L' =  h_0(L)$. Then we have that
	\begin{equation}\label{eq:hequ00}
		\inf\{\Phi_{F,\lambda}(E\setminus B_0):E\in \ch(B_0,G,L)\}=
		\inf\{\Phi_{F,\lambda}(E\setminus B_0):E\in \cc(B_0,G,L')\}.
	\end{equation}
\end{theorem}
In fact, we do not assume the homology in the above theorem satisfies all of 
the seven axioms of Eilenberg and Steenrod; but evidently, if it satisfies the
seven axioms and the additivity axiom \cite{Milnor:1962}, then the above four 
conditions hold. Since both {\v C}ech and singular homology satisfy all these
axioms, see for example \cite{ES:1945,ES:1952,Milnor:1962},
we get particularly that the above theorem holds for singular homology.  

Let $G$ be any complete normed abelian group. Let $\PC_d(\mathbb{R}^n;G)$
be the group of polyhedral chains of dimension $d$ in $\mathbb{R}^n$. For
any $P\in \PC_d(\mathbb{R}^n;G)$, let $\mass(P)$ and
$\size(P)$ be the mass and the size of $P$ defined by
\[
	\mass(P)=\inf\left\{\sum |g_i|\HM^d(\sigma_i):P=\sum g_i \sigma_i\right\},\ 
	\size(P)=\inf\left\{\sum \HM^d(\sigma_i):P=\sum g_i \sigma_i\right\}.
\]
We put
\[
	\fn(P)=\inf\{\mass(Q)+\mass(R):P=Q+\partial R, Q\in \PC_d(\mathbb{R}^n;G),
	R\in \PC_{d+1}(\mathbb{R}^n;G)\}.
\]
Then $\fn$ is a norm on $\mathscr{P}_d(\mathbb{R}^n;G)$, see Theorem 2.2 in \cite
{Fleming:1966}. Let $\FC_d(\mathbb{R}^n;G)$ be the $\fn$-completion of
$\PC_d(\mathbb{R}^n;G)$. The elements of $\FC_d(\mathbb{R}^n;G)$ are called 
flat $G$-chains \cite{Fleming:1966}. For any $T\in \FC_d(\mathbb{R}^n;G)$, the
mass of $T$ is defined by 
\[
	\mass(T)=\inf\left\{\liminf_{k\to \infty}\mass(P_k):P_k\ora{\fn}T, P_k\in
	\PC_d(\mathbb{R}^n;G)\right\},
\]
and the flat size of $T$ is defined by 
\[
	\size(T)=\inf\left\{\liminf_{m\to\infty}\size(P_m): P_m\ora{\fn} T, P_m\in
	\mathscr{P}_{d}(\mathbb{R}^n;G)\right\}.
\]
For any Lipschitz mapping $f:\mathbb{R}^n\to \mathbb{R}^n$, there is an induced
homomorphism $f_{\sharp}:\FC_d(\mathbb{R}^n;G)\to \FC_d(\mathbb{R}^n;G)$, see
Section 5 in \cite{Fleming:1966}.
For any $A\in \FC_{d}(\mathbb{R}^n;G)$, $A$ is called rectifiable, if for any
$\varepsilon>0$ there is a Lipschitz mapping $f:\mathbb{R}^n\to \mathbb{R}^n$
and a polyhedral chain $P\in \PC_d(\mathbb{R}^n;G)$ such that 
$\mass(A-f_{\sharp}P)<\varepsilon$, we denote
by $\RC_{d}(\mathbb{R}^n;G)$ the collection of all such chains, define the group of Lipschitz chains and
normal chains by 
\[
	\LC_{d}(\mathbb{R}^n;G)=\{f_{\sharp}P:P\in \PC_d(\mathbb{R}^n;G)\}
\]
and
\[
	\NC_{d}(\mathbb{R}^n;G)=\{A\in
	\FC_d(\mathbb{R}^n;G):\mathsf{N}(A)<\infty\},
\]
respectively, where $\mathsf{N}(A)=\mass(A)+\mass(\partial A)$.
Then we see that $\RC_{d}(\mathbb{R}^n;G)$ is the $\mass$-completion of
$\LC_{d}(\mathbb{R}^n;G)$, $\FC_{d} (\mathbb{R}^n;G)$ is the $\fn$-completion 
of $\LC_{d}(\mathbb{R}^n;G)$, and we will see that $\NC_{d}(\mathbb{R}^n;G)$ is
the $\mathsf{N}$-completion of $\LC_{d}(\mathbb{R}^n;G)$. 
For any $\mathscr{S}\in \{\NC,\RC,\FC\}$, we put 
\[
	\mathscr{S}_d^c(\mathbb{R}^n;G)=\{A\in \mathscr{S}_d(\mathbb{R}^n;G):
	\spt(A) \text{ is compact}\}.
\]

For any set $X\subseteq \mathbb{R}^n$ and $\mathscr{S}\in \{\LC,\NC,\RC,\FC,
\NC^c,\RC^c,\FC^c\}$, we put 
\[
	\mathscr{S}_d(X;G)=\{A\in \mathscr{S}_{d}(\mathbb{R}^n;G): \spt A\subseteq
	X\}.
\]
\begin{equation}\label{eq:hg0}
	\begin{gathered}
		\mathcal{Z}_{d-1}^{\mathscr{S}}(X,B;G)=\{T\in
		\mathscr{S}_{d-1}(\mathbb{R}^n;G):\spt T\subseteq X,
		\spt \partial T\subseteq B \text{ or }d=1\},\\
		\mathcal{B}_{d-1}^{\mathscr{S}}(X,B;G)=\{T+\partial S: T\in
		\mathscr{S}_{d-1}(\mathbb{R}^n;G),
		\spt T\subseteq B, S\in \mathscr{S}_{d}(\mathbb{R}^n;G), \spt S\subseteq X \},
	\end{gathered}
\end{equation}
and define the homology group
\[
	\Hom_{d-1}^{\mathscr{S}}(X,B;G)=	\mathcal{Z}_{d-1}^{\mathscr{S}}(X,B;G)/
	\mathcal{B}_{d-1}^{\mathscr{S}}(X,B;G);
\]
if $B=\emptyset$, then we simply denote $\mathcal{Z}_{d-1}^{\mathscr{S}}(X,
\emptyset;G) $, $\mathcal{B}_{d-1}^{\mathscr{S}}(X,\emptyset;G)$ and 
$\Hom_{d-1}^{\mathscr{S}}(X,\emptyset;G)$
by $\mathcal{Z}_{d-1}^{\mathscr{S}}(X;G)$, $\mathcal{B}_{d-1}^{\mathscr{S}}
(X;G)$ and $\Hom_{d-1}^{\mathscr{S}}(X;G)$ respectively.

If $B_0\subseteq \mathbb{R}^n$ is a compact Lipschitz neighborhood retract, then
for any $\mathscr{S}\in \{\LC,\NC,\RC,\FC\}$, there is an isomorphism 
$\Psi_{d-1}^{\mathscr{S}}:\Hom_{d-1}^{\mathscr{S}}(B_0;G)\to \cech_{d-1}(B_0;G)$, 
see Proposition \ref{prop:hfc} for details. If $L\subseteq
\Hom_{d-1}^{\mathscr{S}}(B_0;G)$ is a subgroup, for simplifying notation, we use
$\cc(B_0,G,L)$ to denote the collection $\cc(B_0,G,\Psi_{d-1}^{\mathscr{S}}(L))$.
For any $T\in \mathcal{Z}_{d -1}(B_0;G)$, if we denote by $[T]=T+
\mathcal{B}_{d-1}^{\mathscr{S}}(B_0;G)$, and denote by $\langle [T] \rangle$ the
subgroup in $\Hom_{d-1}^{\mathscr{S}}(B_0;G)$ generated by $[T]$, then 
we use $\cc(B_0,G,T)$ to denote the collection $\cc(B_0,G,\Psi_{d-1}^{\mathscr{S}}
(\langle [T] \rangle))$.
\begin{theorem} \label{thm:scequ}
	Let $B_0\subseteq \mathbb{R}^{n}$ be a compact Lipschitz neighborhood retract 
	with $\HM^d(B_0)=0$. Let $G$ be a discrete normed abelian group. 
	Then for any $\mathscr{S}\in \{\LC,\NC,\RC,\FC,\NC^c,\RC^c,\FC^c\}$ and
	$T\in\mathscr{S}_{d-1}(B_0;G)$ with $\partial T=0$,	we have that 
	\begin{equation}\label{eq:scequ}
		\inf\{\size(S): \partial S=T,S\in\mathscr{S}_{d}(\mathbb{R}^{n};G)\}
		=\inf\{\HM^{d}(E): E\in \cc(B_0,G,T)\}.
	\end{equation}
\end{theorem}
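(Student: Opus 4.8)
\medskip
\noindent\textbf{Plan of proof.}
The plan is to prove the two inequalities in \eqref{eq:scequ} separately, after reducing to the single class $\mathscr{S}=\FC$. Throughout one identifies a flat cycle $T\in\FC_{d-1}(\mathbb{R}^n;G)$ with $\spt T\subseteq B_0$ with the \v{C}ech class $[T]\in\cech_{d-1}(B_0;G)$ it determines: this is legitimate because a compact Lipschitz neighbourhood retract is an ANR, so $\cech_{\ast}(B_0;G)=\varprojlim_{U}H_{\ast}(U;G)$ over the open neighbourhoods $U$ of $B_0$ and a flat cycle has a well-defined image in each $H_{\ast}(U;G)$; thus $E\in\cc(B_0,T,G)$ means that $E\supseteq B_0$ is compact and $\cech_{d-1}(i_{B_0,E})[T]=0$. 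Two remarks are used repeatedly. \emph{(Easy half.)} If a compact $E\supseteq B_0$ supports some $S\in\FC_d(\mathbb{R}^n;G)$ with $\partial S=T$, then $E\in\cc(B_0,T,G)$, since for every open $U\supseteq E$ the cycle $T$ bounds $S$ inside $U$ and $\cech_{d-1}(i_{B_0,E})[T]$ is exactly the compatible family of these vanishing images. \emph{(Inverse-limit triviality.)} Conversely $\cech_{d-1}(i_{B_0,E})[T]=0$ forces $[T]$ to bound in $H_{d-1}(U;G)$ for \emph{every} open $U\supseteq E$, because the zero element of an inverse limit is the family of zeros. One also uses the standard facts, available because $G$ is discrete, that a flat $d$-chain supported in a set of finite $\HM^d$ measure is rectifiable with flat size at most that measure, and that a polyhedral chain $P=\sum g_i\sigma_i$ written with non-overlapping simplices and $g_i\ne0$ has $\HM^d(\spt P)=\size(P)$. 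Finally it suffices to treat $\mathscr{S}=\FC$: since $\mathscr{S}_d\subseteq\FC_d$ the inequality ``$\ge$'' is then automatic for the other six classes, while the filling produced for ``$\le$'' is rectifiable with compact support (hence lies in $\RC_d^c$) and can be arranged to be normal, resp.\ Lipschitz, when $T$ is taken in $\NC_{d-1}$, resp.\ $\LC_{d-1}$; in particular one may also assume $\mass(T)<\infty$.

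\medskip
\noindent\emph{The inequality $\inf\size\ge\inf\HM^d$.} Given $S$ with $\partial S=T$ and $\size(S)<\infty$, one would choose polyhedral $P_m\ora{\fn}S$ with $\size(P_m)\to\size(S)$, so that $T_m:=\partial P_m\ora{\fn}T$ and hence $\fn(T_m-T)\to0$; then write $T_m-T=Q_m+\partial R_m$ with $Q_m,R_m$ of compact support and $\mass(Q_m)+\mass(R_m)<\fn(T_m-T)+1/m=:\delta_m$. Filling the compactly supported small-mass $(d-1)$-cycle $Q_m$ by a chain $R_m'$, the chain $S_m:=P_m-R_m-R_m'$ satisfies $\partial S_m=T$. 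By the Federer--Fleming deformation and isoperimetric inequalities (\cite{FF:1960,Fleming:1966}), one may moreover take $R_m$ and $R_m'$ supported in sets of $\HM^d$ measure $\le C\,\delta_m^{d/(d-1)}\to0$: deform $R_m$ into a fine grid, whose deformation chain sweeps $\HM^d$ measure comparable to mesh times mass, and fill a polyhedral approximation of $Q_m$ by a polyhedral chain, whose $\HM^d$ equals its size (a mass‑minimising filling, which is rectifiable with a lower density bound, also serves). Then
\[
 E_m:=\spt P_m\cup\spt R_m\cup\spt R_m'\cup B_0
\]
is compact, contains $B_0$, lies in $\cc(B_0,T,G)$ by the easy half, and, using $\HM^d(B_0)=0$,
\[
 \HM^d(E_m)\le\HM^d(\spt P_m)+\HM^d(\spt R_m)+\HM^d(\spt R_m')\le\size(P_m)+2C\,\delta_m^{d/(d-1)}\,;
\]
letting $m\to\infty$ gives $\inf\{\HM^d(E):E\in\cc(B_0,T,G)\}\le\size(S)$, and then the infimum over $S$.

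\medskip
\noindent\emph{The inequality $\inf\size\le\inf\HM^d$.} Here the plan is to reduce to an $\HM^d$-minimiser. Applying the existence theory \cite{Fang:2013,Feuvrier:2008} with $F\equiv1$ and $\lambda\equiv0$ — so $\Phi_{F,\lambda}(E\setminus B_0)=\HM^d((E\setminus B_0)_{rec})$, which for a minimiser equals $\HM^d(E)$ once its (topologically negligible, cf.\ \cite{Fang:2013}) purely unrectifiable part is discarded and $\HM^d(B_0)=0$ is used — one obtains $E_0\in\cc(B_0,T,G)$ that is compact, rectifiable, of finite $\HM^d$ measure, with $\HM^d(E_0)=\inf\{\HM^d(E):E\in\cc(B_0,T,G)\}$ and, being a minimal hence Ahlfors regular set, carrying a regular family of dyadic cubes adapted to it in the sense of \cite{Feuvrier:2008}. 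By the inverse-limit triviality, $T$ bounds a flat $d$-chain supported in the $1/k$-neighbourhood $U_k$ of $E_0$ for every $k$; Federer--Fleming projecting such a chain onto the adapted skeleton of \cite{Feuvrier:2008} produces $S_k$ with $\partial S_k=T$ (the boundary preserved because $\spt T\subseteq B_0\subseteq E_0$), with $\spt S_k\subseteq U_{k'}$ for some $k'\to\infty$, and with the key bound $\mass(S_k)\le C\,\HM^d(E_0)$ coming from the Ahlfors regularity. Since $\mass(\partial S_k)=\mass(T)$ is fixed, a subsequence converges in the flat norm to $S$ with $\partial S=T$ and $\spt S\subseteq\bigcap_k\overline{U_k}=E_0$; as $\HM^d(E_0)<\infty$ this $S$ is rectifiable with $\size(S)\le\HM^d(E_0)$. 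Hence $\inf\{\size(S):\partial S=T\}\le\HM^d(E_0)$, which together with the previous paragraph gives \eqref{eq:scequ} for $\mathscr{S}=\FC$, and therefore for all seven classes.

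\medskip
\noindent\textbf{The main obstacle.} I expect the hard part to be this last construction: realising a flat filling of $T$ that is supported in the \v{C}ech spanning set itself — not merely in neighbourhoods shrinking to it — and whose size is controlled by $\HM^d$. The neighbourhood fillings coming from the inverse limit carry no a priori mass bound, and the closure of the carrier of a general flat chain may have far larger $\HM^d$ measure than the chain's size; passing to the minimiser $E_0$ is what tames both, its Ahlfors regularity supplying the mass bound for the Federer--Fleming/Feuvrier projection and its finite measure turning the flat limit back into a size estimate. One must still check carefully that the boundary $T$ survives these projections unchanged, that mass‑minimising fillings have $\HM^d(\spt)=\size$, and that the reductions — to $\mathscr{S}=\FC$, to $\mass(T)<\infty$, and the removal of purely unrectifiable parts — are valid for each of the seven classes in the statement.
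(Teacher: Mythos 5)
Your ``$\inf\size\ge\inf\HM^d$'' half is broadly in the spirit of what the paper actually does (polyhedral approximation of $S$ with size control, discreteness of $G$ to turn small flat/mass errors into small size errors, and the easy half of the homology comparison), although the steps you wave at --- the identification of $[T]$ with a \v Cech class, the near-$B_0$ bookkeeping, and the claim that the fillings ``can be arranged to be normal, resp.\ Lipschitz'' so as to cover all seven classes --- are exactly the content of Proposition \ref{prop:hfc}, Lemma \ref{le:fap}, and Lemmas \ref{le:appl}, \ref{le:fnsm}, \ref{le:MS}, \ref{le:LF}, \ref{le:sizediff}, not one-line remarks. The genuine gap is in your ``$\inf\size\le\inf\HM^d$'' half. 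The pivotal claim there, $\mass(S_k)\le C\,\HM^d(E_0)$ for a Federer--Fleming/Feuvrier projection of a neighbourhood filling onto a grid adapted to the minimizer $E_0$, is unjustified and cannot hold in the form you need: the deformation theorem bounds the projected mass by a constant times the mass of the chain being projected, and, as you yourself note, the fillings produced by the inverse-limit argument carry no mass bound whatsoever; projecting cannot prevent the multiplicities on the occupied $d$-faces from being arbitrarily large. Ahlfors regularity of $E_0$ controls at best the total measure of the occupied faces, i.e.\ $\size(S_k)\le c_0\,\HM^d(E_0)+o(1)$ with a dimensional constant $c_0>1$, which does not give the sharp inequality; and without mass bounds the flat-norm compactness extraction (itself delicate for a general discrete normed $G$) fails, so you never obtain the limit chain supported exactly in $E_0$ that was supposed to remove the constant. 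On top of this, the route presupposes existence, rectifiability-after-cleaning, and Ahlfors regularity of an $\HM^d$-minimizer in the \v Cech class, a heavy input that the theorem does not need and that makes the argument fragile (the statement is about infima, with no attainment hypothesis).

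The paper avoids this entirely by never trying to build a filling inside a \v Cech competitor. Proposition \ref{prop:flatsize} first identifies $\inf\size$ with $\inf\HM^d$ over sets spanning $T$ in the flat-chain homology $\Hom^{\mathscr S}$, where the ``$\le$'' half really is easy because spanning in that homology is \emph{defined} by the existence of a filling supported in the set, so $\size(S)\le\HM^d(E)$ is immediate. The remaining task --- passing from $\mathscr C_{\mathscr S}(B_0,T,G)$ to $\cc(B_0,T,G)$ --- is done by Theorem \ref{thm:hequ} together with Proposition \ref{prop:hfc}: an \emph{arbitrary} \v Cech competitor $E$ is deformed by Theorem \ref{thm:polyapp} onto a finite polyhedral complex, increasing (the rectifiable part of) its measure by at most $\varepsilon$, and on finite polyhedra all the homologies in play coincide, so the deformed set spans $T$ in the flat-chain sense and hence supports a filling of size at most $\HM^d(E)+\varepsilon$. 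If you want to salvage your plan, this deformation-of-the-competitor step (not a minimizer, not regularity theory, not a mass bound) is the missing idea; Lemma \ref{le:cnon} then handles the compact-support classes $\NC^c,\RC^c,\FC^c$.
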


We denote by $\mathscr{D}^d(\mathbb{R}^n)$ the topological vector space of
smooth differential forms of degree $d$ with compact support in
$\mathbb{R}^{n}$. The elements of dual $\mathscr{D}_d(\mathbb{R}^n)$ are
called $d$-dimensional currents, see \cite[4.1.7]{Federer:1969}. The mass of a
current $T\in \mathscr{D}_d(\mathbb{R}^n)$ is defined by 
\[
	\mass(T)=\sup\{\langle T,\omega \rangle: \omega\in
	\mathscr{D}^d(\mathbb{R}^n), \|\omega\|_{\infty}\leq 1\}.
\]
A $d$-dimensional locally rectifiable current is a current $T\in \mathscr{D}_{d}
(\mathbb{R}^n)$ which can be written as $T=(\HM^d\mr M)
\wedge \eta$, i.e.  
\[
	\langle T,\omega \rangle=\int_{x\in M}\langle \eta(x),\omega \rangle
	d\HM^d(x),\ \omega\in \mathscr{D}^d(\mathbb{R}^n),
\]
where $M$ is a $d$-rectifiable set, $\eta:\mathbb{R}^n\to
\wedge_d\mathbb{R}^n$ is locally summable  such that for $\HM^d\mr M$-a.e. 
$\eta(x)$ is a simple $d$-vector associated with the approximate tangent space 
of $M$ at $x$ and $|\eta(x)|$ is an integer. A rectifiable current is a
locally rectifiable current with compact support. A $d$-dimensional locally
integral current is a $d$-dimensional locally rectifiable current $T$ such that
$\partial T$ is a $(d-1)$-dimensional locally rectifiable current, an integral
current is a locally integral current with compact support, see \cite[4.1.28]
{Federer:1969}. We denote by $\RC_d^{loc}(\mathbb{R}^n)$ and $\IC_d^{loc}
(\mathbb{R}^n)$ the collections of all $d$-dimensional locally rectifiable and
locally integral currents respectively, and by $\RC_d(\mathbb{R}^n)$ and $\IC_d
(\mathbb{R}^n)$ the collections of all $d$-dimensional rectifiable and
integral currents respectively. For any $T\in \mathscr{R}_d^{loc}
(\mathbb{R}^n)$, we define the Hausdorff size of $T$ by 
\[
	\size(T)=\inf\{\HM^d(E): E \text{ are Borel sets such that }T\mr E =T\}.
\]
Since the group $\RC_{d}^{loc}(\mathbb{R}^n)$ of locally rectifiable currents 
and the group $\RC_{d}(\mathbb{R}^n;\mathbb{Z})$ of rectifiable flat 
$\mathbb{Z}$-chains are isometrically isomorphic, see Lemma \ref{le:isoR}, as
a consequence of Theorem \ref{thm:scequ}, we have the following corollary.
\begin{corollary}\label{co:scequ}
	Let $B_0\subseteq \mathbb{R}^{n}$ be a compact Lipschitz neighborhood
	retract with $\HM^d(B_0)=0$.
	Then for any $\mathscr{S}\in\{\IC,\RC,\IC^{loc},\RC^{loc}\}$,
	$\sigma\in \mathscr{S}_{d-1}(\mathbb{R}^n)$ with $\partial \sigma=0$ and 
	$\spt \sigma \subseteq B_0$, we have that
	\begin{equation}\label{eq:scequ00}
		\inf\{\size(T): T\in \mathscr{S}_d(\mathbb{R}^n),\ \partial T=\sigma\}=
		\inf\{\HM^d(E):E\in \cc(B_0,\mathbb{Z},\sigma)\}.
	\end{equation}
\end{corollary}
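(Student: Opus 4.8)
The plan is to deduce Corollary~\ref{co:scequ} from Theorem~\ref{thm:scequ} by transporting the left‑hand side of \eqref{eq:scequ00} to the setting of flat $\mathbb{Z}$‑chains through the isometric isomorphism $\Psi$ of Lemma~\ref{le:isoR}, taken in every dimension. The properties of $\Psi$ I will invoke are that it is the identity on the common subgroup $\LC_{d}(\mathbb{R}^n)=\LC_{d}(\mathbb{R}^n;\mathbb{Z})$ of Lipschitz chains, that it commutes with $\partial$, that it preserves supports, and that it carries the Hausdorff size of a locally rectifiable current to the flat size of the corresponding flat $\mathbb{Z}$‑chain; each of these is either part of Lemma~\ref{le:isoR} or follows from it together with the density of Lipschitz chains and the lower semicontinuity of $\mass$ and $\size$.

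The first step is a monotonicity reduction. For each $\mathscr{S}\in\{\IC,\RC,\IC^{loc},\RC^{loc}\}$ one has $\IC_{d}(\mathbb{R}^n)\subseteq\mathscr{S}_{d}(\mathbb{R}^n)\subseteq\RC_{d}^{loc}(\mathbb{R}^n)$, while the right‑hand side of \eqref{eq:scequ00} does not depend on $\mathscr{S}$; hence it suffices to prove \eqref{eq:scequ00} in the two extreme cases $\mathscr{S}=\RC^{loc}$ and $\mathscr{S}=\IC$. Moreover, since $\spt\sigma\subseteq B_0$ is compact and $\sigma$ is locally rectifiable, $\sigma$ is actually a rectifiable current of finite mass; consequently $\Psi(\sigma)$ lies in $\RC_{d-1}^{c}(\mathbb{R}^n;\mathbb{Z})$ and, as $\partial\Psi(\sigma)=\Psi(\partial\sigma)=0$, also in $\NC_{d-1}^{c}(\mathbb{R}^n;\mathbb{Z})$, with $\spt\Psi(\sigma)\subseteq B_0$, so $\Psi(\sigma)$ is an admissible boundary datum in Theorem~\ref{thm:scequ} for the chain classes $\RC$ and $\NC^{c}$.

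Theorem~\ref{thm:scequ} applies in what follows, because $\mathbb{Z}$ with its absolute‑value norm is a discrete normed abelian group and $B_0$ is a compact Lipschitz neighbourhood retract with $\HM^{d}(B_0)=0$. For $\mathscr{S}=\RC^{loc}$, Lemma~\ref{le:isoR} says precisely that $\Psi$ maps $\RC_{d}^{loc}(\mathbb{R}^n)$ onto $\RC_{d}(\mathbb{R}^n;\mathbb{Z})$, so $T\mapsto\Psi(T)$ is a size‑preserving bijection of $\{T\in\RC_{d}^{loc}(\mathbb{R}^n):\partial T=\sigma\}$ onto $\{S\in\RC_{d}(\mathbb{R}^n;\mathbb{Z}):\partial S=\Psi(\sigma)\}$, and Theorem~\ref{thm:scequ} applied with the class $\RC$ gives
\[
	\inf\{\size(T):T\in\RC_{d}^{loc}(\mathbb{R}^n),\ \partial T=\sigma\}=\inf\{\HM^{d}(E):E\in\cc(B_0,\Psi(\sigma),\mathbb{Z})\}.
\]
For $\mathscr{S}=\IC$, I would check that $\Psi$ restricts to a size‑preserving bijection of $\IC_{d}(\mathbb{R}^n)$ onto $\NC_{d}^{c}(\mathbb{R}^n;\mathbb{Z})$: an integral current is a compactly supported rectifiable current whose boundary is a rectifiable current, hence of finite mass, so its image lies in $\NC_{d}^{c}(\mathbb{R}^n;\mathbb{Z})$; conversely a compactly supported normal flat $\mathbb{Z}$‑chain has finite mass and finite boundary mass, so by the rectifiability of finite‑mass flat $\mathbb{Z}$‑chains it and its boundary are rectifiable, and therefore its $\Psi$‑preimage is a compactly supported rectifiable current with rectifiable boundary, i.e.\ an integral current. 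Theorem~\ref{thm:scequ} applied with the class $\NC^{c}$ then yields
\[
	\inf\{\size(T):T\in\IC_{d}(\mathbb{R}^n),\ \partial T=\sigma\}=\inf\{\HM^{d}(E):E\in\cc(B_0,\Psi(\sigma),\mathbb{Z})\}.
\]
It remains to observe that $\cc(B_0,\sigma,\mathbb{Z})=\cc(B_0,\Psi(\sigma),\mathbb{Z})$, i.e.\ that $\sigma$ and $\Psi(\sigma)$ induce the same class in $\cech_{d-1}(B_0;\mathbb{Z})$; this holds because that class is obtained by flat‑norm approximation of the cycle by Lipschitz cycles supported in a neighbourhood of $B_0$, and $\Psi$ fixes Lipschitz chains and respects $\partial$ and supports. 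Combining the two displays with the monotonicity reduction gives \eqref{eq:scequ00}.

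The logical skeleton above is short; the substance lies in the two class identifications. The step I expect to be most delicate is making sure that the word ``isometrically'' in Lemma~\ref{le:isoR} encodes equality of the Hausdorff size of a current with the flat size of the corresponding flat $\mathbb{Z}$‑chain, rather than merely equality of masses or of flat norms, and, to a lesser extent, verifying that $\Psi$ sends $\IC_{d}(\mathbb{R}^n)$ exactly onto $\NC_{d}^{c}(\mathbb{R}^n;\mathbb{Z})$ (which uses the rectifiability of finite‑mass flat $\mathbb{Z}$‑chains) and that the assignment of \v{C}ech homology classes to cycles supported in $B_0$ is natural with respect to $\Psi$. Granting these points, the corollary is a straightforward translation of Theorem~\ref{thm:scequ}.
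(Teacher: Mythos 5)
Your proposal is correct and follows essentially the same route as the paper, whose proof of Corollary \ref{co:scequ} simply invokes Theorem \ref{thm:scequ} together with Lemma \ref{le:isoR} and Corollary \ref{co:isoI} (the identification of $\IC_d$ with compactly supported normal $\mathbb{Z}$-chains); your squeeze between the extreme classes $\IC$ and $\RC^{loc}$ and your explicit attention to size preservation and to the compatibility of the \v{C}ech class under the isomorphism are just a more detailed write-up of the same argument.
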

For any $\mathscr{S}\in \{\IC,\RC,\FC,\mathbf{N}\}$ and sets $B\subseteq X
\subseteq \mathbb{R}^n$, we put 
\begin{equation}\label{eq:hg}
	\begin{gathered}
		\mathcal{Z}_{d-1}^{\mathscr{S}}(X,B)=\{T\in \mathscr{S}_{d-1}(\mathbb{R}^n):\spt T\subseteq X,
		\spt \partial T\subseteq B \text{ or }d=1\},\\
		\mathcal{B}_{d-1}^{\mathscr{S}}(X,B)=\{T+\partial S: T\in \mathscr{S}_{d-1}(\mathbb{R}^n),
		\spt T\subseteq B, S\in \mathscr{S}_{d}(\mathbb{R}^n), \spt S\subseteq X \},
	\end{gathered}
\end{equation}
and define the homology group
\[
	\Hom_{d-1}^{\mathscr{S}}(X,B)=	\mathcal{Z}_{d-1}^{\mathscr{S}}(X,B)/
	\mathcal{B}_{d-1}^{\mathscr{S}}(X,B).
\]
By Theorem 5.11 in \cite{FF:1960} and Proposition 3.7 in \cite{Pauw:2007}, we
see that $\Hom_{\ast}^{\IC}$, $\Hom_{\ast}^{\FC}$ and $\Hom_{\ast}^{\mathbf{N}}$ 
satisfy the seven axioms of Eilenberg and Steenrod \cite{ES:1945,ES:1952}. 
Thus we get the following existence of size minimal integral cycles from the
regularity of minimal sets.
\begin{proposition} \label{prop:com}
	Let $M\subseteq \mathbb{R}^3$ be a 2-dimensional compact submanifold of class
	$C^{1,\alpha}$ without boundary, $0<\alpha\leq 1$. Then for any 
	$\sigma\in \Hom_1^{\IC}(M)$, there exists $[T_0]\in \Hom_2^{\IC}(\mathbb{R}^3,M)$ 
	such that $[\partial T_0]= \sigma$ and 
	\[
		\size(T_0)=\inf\{\size(T):[T]\in \Hom_2^{\IC}(\mathbb{R}^3,M),
		[\partial T]=\sigma\}.
	\]
\end{proposition}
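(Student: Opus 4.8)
The plan is to recognize $\inf\{\size(T):\ [T]\in\Hom_2^{\IC}(\mathbb{R}^3,M),\ [\partial T]=\sigma\}$ as the infimum in a Plateau problem for the Hausdorff measure with a \v Cech homological boundary condition — a problem whose solvability is part of the existence theory recalled in the introduction — and then to convert the regular minimal set so produced into a competing integral current of no larger size.

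First I would observe that the competition takes place in a single relative homology class. Since $\Hom_{\ast}^{\IC}$ satisfies the Eilenberg--Steenrod axioms and $\mathbb{R}^3$ is contractible, $\Hom_2^{\IC}(\mathbb{R}^3)=\Hom_1^{\IC}(\mathbb{R}^3)=0$, so the connecting map $\partial\colon\Hom_2^{\IC}(\mathbb{R}^3,M)\to\Hom_1^{\IC}(M)$ in the long exact sequence of $(\mathbb{R}^3,M)$ is an isomorphism, and every admissible $T$ represents the one class $\partial^{-1}(\sigma)$. Write $m$ for the infimum, fix an integral $1$-cycle representing $\sigma$ (still denoted $\sigma$, with $\spt\sigma\subseteq M$), and set $m_R:=\inf\{\HM^2(E\setminus M):E\in\cc(M,\sigma,\mathbb{Z})\}$; note every such $E$ contains $M$. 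To get $m_R\le m$, take an admissible $T$, write $\partial T=\sigma+\partial W$ with $W$ an integral chain supported in $M$, and use Lemma~\ref{le:isoR} and the definition of the flat size to approximate $T$ in the flat norm by polyhedral chains $P_k$ with $\size(P_k)\to\size(T)$; after an isoperimetric correction of the small cycle $\partial P_k-\partial T$ one may assume $\partial P_k=\partial T$, so $\sigma=\partial(P_k-W)$ with $P_k-W$ supported in the compact set $E_k:=M\cup\spt P_k$. As in the proof of Theorem~\ref{thm:scequ}, a flat boundary of a chain supported in such a compact set vanishes in its \v Cech homology, so $E_k\in\cc(M,\sigma,\mathbb{Z})$, and $\HM^2(E_k\setminus M)\le\HM^2(\spt P_k)=\size(P_k)\to\size(T)$; taking infima gives $m_R\le m$.

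The heart of the matter is to realize $m_R$ and push it back to currents. The existence theory for the Plateau problem with \v Cech (equivalently Vietoris) homological boundary conditions recalled in the introduction, applied with boundary set the compact Lipschitz neighbourhood retract $M$, coefficients $\mathbb{Z}$, and constant integrand, yields a compact $E_0\in\cc(M,\sigma,\mathbb{Z})$ with $\HM^2(E_0\setminus M)=m_R$. Away from $M$, $E_0$ is an Almgren-minimal set, and along $M$ a sliding minimizer with the $C^{1,\alpha}$ boundary $M$; by the regularity theory of two-dimensional minimal sets in $\mathbb{R}^3$ — Taylor's local classification ($C^{1,\alpha}$ sheets, the $Y$-cone, the $T$-cone) together with its free-boundary counterpart along the $C^{1,\alpha}$ surface $M$ — the set $E_0$ is a compact local Lipschitz neighbourhood retract whose singular set and whose trace $\Gamma:=\overline{E_0\setminus M}\cap M$ are $\HM^2$-negligible, and $\overline{E_0\setminus M}=(E_0\setminus M)\cup\Gamma$ since $E_0$ is closed. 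Consequently $\Hom_{\ast}^{\IC}$ agrees with $\cech_{\ast}$ on $E_0$ and on $M$ compatibly, so $E_0\in\cc(M,\sigma,\mathbb{Z})$ says precisely that $\sigma$ dies in $\Hom_1^{\IC}(E_0)$; the long exact sequence of $(E_0,M)$ then gives $\tau_0\in\Hom_2^{\IC}(E_0,M)$ with $\partial\tau_0=\sigma$, represented by an integral current $T_0$ supported in $E_0$ with $\spt\partial T_0\subseteq M$ and $[\partial T_0]=\sigma$. Replacing $T_0$ by $T_0\mr(\mathbb{R}^3\setminus M)$ removes only the current $T_0\mr M$, which is supported in $M$ and hence does not change $\tau_0$, and leaves an integral current carried by $\overline{E_0\setminus M}$ with boundary supported in $\Gamma\subseteq M$; concretely one may instead take $T_0=\bigl(\partial\sum_\gamma\ell_\gamma[Q_\gamma]\bigr)\mr(\mathbb{R}^3\setminus M)$, where the $Q_\gamma$ are the connected components of $\mathbb{R}^3\setminus E_0$ and the integer labels $\ell_\gamma$ are chosen — via the duality between relative $2$-cycles of $(E_0,M)$ and locally constant functions on the complement — so that the class of $T_0$ is $\tau_0$. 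Either way $\size(T_0)\le\HM^2(\overline{E_0\setminus M})=\HM^2(E_0\setminus M)=m_R$, while $[T_0]\in\Hom_2^{\IC}(\mathbb{R}^3,M)$ has $[\partial T_0]=\sigma$, so $\size(T_0)\ge m$; combined with $m_R\le m$ this forces $\size(T_0)=m=m_R$, and $[T_0]$ is the required minimizing class.

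The hard part is this last block. One must know that the minimal set $E_0$ supplied by the general existence theory is regular enough — a local Lipschitz neighbourhood retract up to and along the free boundary on the $C^{1,\alpha}$ manifold $M$, with $\HM^2$-negligible singular set — for the \v Cech/integral-current identification to be legitimate, and one must carry out the restriction and slicing that move $T_0$ off $M$ without disturbing its relative homology class; once these are in place, the remaining steps are bookkeeping with the exact sequences and with the flat and size norms.
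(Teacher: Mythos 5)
Your proposal follows essentially the same route as the paper: identify the infimum with the \v{C}ech--Reifenberg value $\inf\{\HM^{2}(E\setminus M)\}$ (the paper invokes Proposition~\ref{prop:fssb}, one half of which you reprove by polyhedral approximation), take a \v{C}ech minimizer $E_0$, use Taylor's interior regularity together with the boundary regularity along the $C^{1,\alpha}$ surface $M$ to conclude that $E_0$ is a local Lipschitz neighborhood retract, and then transfer back to an integral current carried by $E_0$ of size at most $\HM^{2}(E_0\setminus M)$. The only difference is executional: you produce the competitor via the homology comparison (Proposition~\ref{prop:hfc}) and the long exact sequence of $(E_0,M)$ and then restrict off $M$, whereas the paper retracts the currents of a minimizing sequence onto the limit set; both amount to the same argument.
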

It is still valid if we replace the integral currents homology
$\Hom^{\IC}$ with $\Hom^{\mathbf{N}}$ or $\Hom^{\RC}$ in the above proposition. 

Let's quickly brief readers on the strategy of the paper. Section 2 is devoted
to giving some results that a $d$-set can be deformed into a polyhedral
network and the increment of Hausdorff measure can be  arbitrarily small, see
Theorem \ref{thm:polyapp}. That allows us to deformed a rectifiable flat chain
into polyhedral chain such that the  increment of the size can be
arbitrarily small, see Theorem \ref{thm:pca}. Following from this result, we
will get that the infimum value for size minimizing problem for flat chains is
equal to the infimum value for the corresponding  Plateau's problem with
\v{C}ech homology conditions. In Section 4, we will see that currents and flat
$\mathbb{Z}$ chains are the same. 
Theorem \ref{thm:hequ} is following from the deformation theorem which is
developed in Section 2, and the proof can be found in Section 6.

\section{Approximation by polyhedrons}
For any $x\in \mathbb{R}^n$, $r>0$, we denote by $\oball(x,r)$ and
$\cball(x,r)$ the open and the closed balls centered at $x$ with radius $r$
respectively, and define the mapping $\scale{x,r}:\mathbb{R}^n\to
\mathbb{R}^n$ by $\scale{x,r}(y)=x+r\cdot (y-x)$. For any subsets $A,B$ in
$\mathbb{R}^n$, we denote $A+B=\{x+y:x\in A,y\in B\}$, thus
$A+\oball(0,\varepsilon)$ is the $\varepsilon$ neighborhood of $A$ in
$\mathbb{R}^{n}$. We denote by $\omega_n$ and $\sigma_{n-1}$ the
$n$-dimensional Hausdorff measure of the unit ball and the $(n-1)$-dimensional
Hausdorff measure of the unit sphere in $\mathbb{R}^{n}$ respectively. Let
$H\subseteq \mathbb{R}^n$ be a linear subspace. If $d\leq \dim H \leq n$, then
$H$ is linearly isometric to $\mathbb{R}^{\dim H}$, we denote by
$\grass{H}{d}$ and $\gm{H}{d}$ the corresponding Grassmannian  manifold and
the invariant probability measure respectively.

Let $P$ be any convex polyhedron in $\mathbb{R}^n$. Suppose $H$ is the
smallest affine space which contains $P$. We denote by $\mathfrak{r}(P)$ the 
infimum of the radius of balls in $H$ that contains $P$, by $r(P)$ the 
supremum of the radius of balls in $H$ which is contained in $P$, and define 
the rotundity of $P$ by $\mathcal{R}(P)=r(P)/\mathfrak{r}(P)$. Let $\mathcal{K}$
be any polyhedral complex in $\mathbb{R}^n$. We denote by $|\mathcal{K}|$ the
realization of $\mathcal{K}$, and put 
\[
	\mathfrak{r}(\mathcal{K})=\max\{\mathfrak{r}(P):P\in \mathcal{K}\}
	\text{ and } \mathcal{R}(\mathcal{K})=\min\{\mathcal{R}(P):P\in
	\mathcal{K}\}.
\]
For any $x\in \intr(P)$, we denote by $\Pi_{P,x}$ the mapping $P\setminus
\{x\}\to \partial P$ defined by 
\[
	\Pi_{P,x}(y)=\{x+t(y-x):t\geq 0\}\cap \partial P.
\]
\begin{lemma}\label{le:projirrp}
	Let $E\subseteq [0,1]\times \mathbb{R}^{n-1}$ be a $\HM^d$-measurable bounded purely
	$d$-unrectifiable set. Then for $\HM^n$-a.e. $x\in (1,\infty)\times
	\mathbb{R}^{n-1}$, $\Psi_x(E)$ is purely $d$-unrectifiable, where the
	mapping $\Psi_x:[0,1]\times\mathbb{R}^{n-1} \to \{0\}\times
	\mathbb{R}^{n-1}$ is defined by $\Psi_x(z)=\{x+\lambda (z-x):
	\lambda\in \mathbb{R}\}\cap \{0\}\times \mathbb{R}^{n-1}$.
\end{lemma}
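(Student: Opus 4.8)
The plan is to reduce the statement to a one-parameter family of projections and then apply the classical fact (due to Federer, Besicovitch-type projection theorem, see \cite[3.3.13--3.3.15]{Federer:1969}) that a purely $d$-unrectifiable set in $\mathbb{R}^n$ projects to a $\HM^d$-null set on $\gm{n}{d}$-almost every $d$-plane, together with a Fubini-type argument to pass from ``almost every plane'' to ``almost every center of projection $x$''. The key observation is that for $x=(x_1,x')$ with $x_1>1$, the map $\Psi_x$ restricted to the slab $[0,1]\times \mathbb{R}^{n-1}$ is the central (radial) projection onto the hyperplane $\{0\}\times\mathbb{R}^{n-1}$ from the point $x$; on the slab this is a bijective bi-Lipschitz map onto its image (the denominators $x_1-z_1\ge x_1-1>0$ stay bounded away from $0$), so it is in particular a Lipschitz map, and Lipschitz images of purely unrectifiable sets need not be unrectifiable — hence one really does need to exploit the freedom in $x$.

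First I would fix notation: write points of $\mathbb{R}^n$ as $(t,y)$ with $t\in\mathbb{R}$, $y\in\mathbb{R}^{n-1}$, and for $z=(z_1,z')\in[0,1]\times\mathbb{R}^{n-1}$ compute $\Psi_x(z)=\bigl(0,\; z' + \tfrac{z_1}{x_1-z_1}(z'-x')\bigr)$ when $x=(x_1,x')$, $x_1>1$. Then I would slice the parameter space $(1,\infty)\times\mathbb{R}^{n-1}$: for each fixed direction through the origin — equivalently, after a harmless reparametrization, for each fixed line $\ell$ joining $x$ to a variable point — central projection from $x$ onto $\{0\}\times\mathbb{R}^{n-1}$ agrees, up to a bi-Lipschitz change of coordinates on the target, with an orthogonal projection onto a $d$-plane, or more efficiently: the composition ``radially project $E$ from $x$'' can be compared with orthogonal projection onto the $(n-1)$-plane $\{0\}\times\mathbb{R}^{n-1}$ followed by a diffeomorphism. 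The cleanest route is: (i) show $\Psi_x(E)$ purely $d$-unrectifiable is equivalent to $\HM^d(P_\pi(\Psi_x(E)))=0$ for $\gm{n-1}{d}$-a.e.\ $d$-plane $\pi\subseteq\{0\}\times\mathbb{R}^{n-1}$; (ii) observe $P_\pi\circ\Psi_x$, as $(x,\pi)$ ranges over $(1,\infty)\times\mathbb{R}^{n-1}\times\grass{n-1}{d}$, realizes (a full-measure family of) linear-fractional maps whose ``linear parts'' sweep out a positive-measure subset of $\grass{n}{d}$ for a.e.\ fixed $x$; (iii) invoke the projection theorem for $E$ in $\mathbb{R}^n$ to get $\HM^d(T_\natural(E))=0$ for $\gm{n}{d}$-a.e.\ $T$, and transfer this back through the bi-Lipschitz factor, which preserves $\HM^d$-nullity.

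The main technical step — and the one I expect to be the real obstacle — is (ii): controlling how the ``effective projection direction'' of $P_\pi\circ\Psi_x$ depends on $(x,\pi)$, and checking that the induced map into $\grass{n}{d}$ pushes Lebesgue measure on the parameter space to a measure absolutely continuous with respect to $\gm{n}{d}$ (or at least that null sets pull back to null sets). Concretely I would compute the differential of $\Psi_x$ at a point $z$, identify the $d$-plane spanned by $d\Psi_x(z)$ applied to a given $d$-plane, and verify that for fixed $z$ the assignment $x\mapsto$ (this $d$-plane) is a submersion onto an open subset of the relevant Grassmannian — this is a transversality/Sard-type computation. Once absolute continuity is in hand, a Fubini argument over $x$ (using that a $\HM^n$-null set of bad $x$ is exactly what the conclusion allows) finishes the proof: for $\HM^n$-a.e.\ $x$, $\HM^d(P_\pi(\Psi_x(E)))=0$ for $\gm{n-1}{d}$-a.e.\ $\pi$, hence $\Psi_x(E)$ is purely $d$-unrectifiable by the converse direction of the projection theorem. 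A subtlety to handle along the way is measurability of the set of ``good'' $x$ in $(1,\infty)\times\mathbb{R}^{n-1}$, which follows from the joint Borel measurability of $(x,\pi)\mapsto\HM^d(P_\pi(\Psi_x(E)))$ via a standard monotone-class argument on the slicing.
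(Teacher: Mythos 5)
Your plan has the same endpoints as the paper's proof (characterize pure unrectifiability of $\Psi_x(E)$ by nullity of almost all $d$-plane projections inside $H=\{0\}\times\mathbb{R}^{n-1}$, and ultimately invoke the Besicovitch--Federer projection theorem for $E$ in $\mathbb{R}^n$ together with a Fubini argument over $x$), but the decisive transfer step (ii)--(iii) is not correct as you describe it, and this is a genuine gap. First, $\Psi_x$ is not ``bijective bi-Lipschitz onto its image'' on the slab: it collapses every segment in which a line through $x$ meets $[0,1]\times\mathbb{R}^{n-1}$, so it is a genuine projection-type map; in particular central projection is not an orthogonal projection followed by a diffeomorphism of the target. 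More importantly, for fixed $x$ and $\pi$ the map $P_\pi\circ\Psi_x$ is \emph{not} an orthogonal projection onto a single $d$-plane up to a bi-Lipschitz change of coordinates: its fibers are the slab's intersections with the pencil of $(n-d)$-planes spanned by $x$ and $v+\pi_H^{\perp}$ as $v$ varies in $\pi$, and these planes are not parallel, so nullity of $P_\pi(\Psi_x(E))$ cannot be ``transferred back through the bi-Lipschitz factor'' from nullity of one projection $T_{\natural}(E)$. Likewise, the proposed transversality/Sard computation on $d\Psi_x(z)$ tells you where tangent $d$-planes are sent, which gives no control whatsoever on $\HM^d$ of the image of a purely unrectifiable set; what is needed is a quantitative bound of $\HM^d(P_\pi(\Psi_x(E)))$ by an integral over the fibers recording whether $E$ meets them, i.e.\ an integral-geometric argument, and that is precisely the content your sketch leaves open.

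The paper fills exactly this hole. It integrates $\HM^d(V_{\natural}\Psi_x(E))$ over $x$ in a ball $\cball$ and over $V\in\grass{H}{d}$, writes the integrand by Fubini as $\int_{v\in V}\chi\big(\Psi_x(E)\cap(v+V_H^{\perp})\big)\,d\HM^d(v)$, and uses the elementary observation that $\Psi_x(E)$ can meet $v+V_H^{\perp}$ only if $E$ meets the $(n-d)$-plane $v+P(u)$ spanned by $V_H^{\perp}$ and the direction $u=W_{\natural}(x-v)/|W_{\natural}(x-v)|$. The coarea formula \cite[3.2.22]{Federer:1969} applied to the angular map $f_v$, whose Jacobian is bounded below on $\cball$, converts the average over centers $x$ into an average over directions $u$; integrating in $v$ then yields a bound by $\int\HM^d\big(P(u)_{\natural}^{\perp}(E)\big)$ against an absolutely continuous weight, and this vanishes by Besicovitch--Federer applied to $E$. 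Chebyshev/Fubini in $x$ then gives that for $\HM^n$-a.e.\ $x$ almost every projection of $\Psi_x(E)$ inside $H$ is $\HM^d$-null, hence $\Psi_x(E)$ is purely $d$-unrectifiable. So the ``absolute continuity of the induced map to the Grassmannian'' you flagged as the main obstacle is indeed the heart of the matter, but it is established by this fiberwise counting plus coarea computation, not by a submersion argument on differentials; to make your proposal into a proof you would have to supply essentially this computation.
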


\begin{proof}
	Take any closed ball $\cball\subseteq (1,\infty)\times \mathbb{R}^{n-1}$,
	and we will show that for $\HM^n$-a.e. $x\in \cball$, $\Psi_x(E)$ is purely
	$d$-unrectifiable. We put $H=\{0\}\times \mathbb{R}^{n-1}$.  For any $d$-plane $V\subseteq H$ through 0, we denote by
	$V_H^{\perp}$ the space of all vectors in $H$ which are perpendicular to $V$.
	For any vector $u\in \mathbb{R}^n\setminus \{0\}$, we denote by $P(u)$ the 
	vector space generated by $u$ and $V_H^{\perp}$, and denote by $W$ the 
	$(d+1)$-dimensional vector space $V+\{te:t\in \mathbb{R}\}$, where $e=(1,0,
	\cdots,0)\in \mathbb{R}^n$. We put 
	\[
		M(\cball)=\int_{x\in \cball} \int_{V\in \grass{H}{d}}
		\HM^d(V_{\natural}\circ\Psi_x(E)) d \gm{H}{d}(V) d \HM^n(x).
	\]
	We assume $\cball=\cball(x_0,R_0)$. Let $\chi$ be the set function given by
	$\chi(\emptyset)=0$ and $\chi(X)=1$ if $X\neq \emptyset$. Then we get that 
	\[
		\begin{aligned}
			M(\cball)&=\int_{V\in \grass{H}{d}}\int_{x\in \cball} 
			\HM^d(V_{\natural}\circ\Psi_x(E)) d \HM^n(x) d \gm{H}{d}(V) \\
			&=\int_{V\in \grass{H}{d}}\int_{x\in \cball} 
			\int_{v\in V} \chi\big(\Psi_x(E)\cap (v+ V_H^{\perp})\big) d \HM^d(v) d
			\HM^n(x) d \gm{H}{d}(V) \\
			&=\int_{V\in \grass{H}{d}} 
			\int_{v\in V}\int_{x\in \cball} \chi\big(\Psi_x(E)\cap (v+ V_H^{\perp})\big)d
			\HM^n(x)  d \HM^d(v) d \gm{H}{d}(V). 
		\end{aligned}
	\]
	For any $v\in V$, let $f_v:\cball\to W\cap \partial \cball(0,1)$ be the	
	mapping given by	$f_v(x)=W_{\natural}(x-v)/|W_{\natural}(x-v)|$.
	We see that for any $u\in W\cap \partial \cball(0,1)$, $f_v^{-1}(u)= \cball
	\cap (v+P(u))$. If $E\cap
	(v+P(u))=\emptyset$, then for any $x\in f_v^{-1}(u)$, $\Psi_x(E)\cap
	(v+V_H^{\perp})=\emptyset$, thus
	\[
		\int_{y\in f_v^{-1}(u)}\chi\big(\Psi_y(E)\cap (v+ V_H^{\perp})\big) d
		\HM^{n-d}(y)\leq \chi\big(E\cap (v+P(u))\big)\cdot \omega_{n-d} R_0^{n-d}.
	\]
	Since $E\subseteq [0,1]\times \mathbb{R}^{n-1}$ is bounded and
	$\cball(x_0,R_0)\subseteq (1,+\infty)\times \mathbb{R}^{n-1}$, there is a
	ball $\cball(0,R_1)$ such that 
	\[
		\bigcup_{x\in \cball} \Psi_x(E)\subseteq H\cap \cball(0,R_1).
	\]
	Since $\ap J_d f_v(x) \geq 1/|x-v|\geq 1/(|x_0-v|+R_0)\geq 1/C_1$ for any $x\in
	\cball$ and $v\in V\cap \cball(0,R_1)$, and by Theorem 3.2.22 in
	\cite{Federer:1969}, we see that 
	\[
		\begin{aligned}
			&\int_{x\in \cball} \chi\big(\Psi_x(E)\cap (v+ V_H^{\perp})\big) \ap J_d
			f_v(x)d \HM^n(x)\\
			&=\int_{W\cap \partial \cball(0,1)}\int_{y\in f_v^{-1}(u)}
			\chi\big(\Psi_y(E)\cap (v+ V_H^{\perp})\big)d \HM^{n-d}(y) d \HM^d(u)\\
			&\leq \omega_{n-d}R_0^{n-d} \int_{u\in W\cap \partial \cball(0,1)}
			\chi\big(E\cap (v+P(u))\big) d \HM^d(u),
		\end{aligned}
	\]
	thus
	\[
		\begin{aligned}
			\int_{x\in \cball} \chi\big(\Psi_x(E)\cap (v+ V_H^{\perp})\big)d
			\HM^n(x)&\leq C_2 \int_{u\in W\cap \partial \cball(0,1)}
			\chi\big(E\cap (v+P(u))\big) d \HM^d(u),
		\end{aligned}
	\]
	where $C_2=\omega_{n-d}R_0^{n-d} C_1$.
	Hence 
	\[
		\begin{aligned}
			M(\cball)&\leq C_2\int_{V\in \grass{H}{d}}\int_{v\in V}
			\int_{u\in W\cap \partial \cball(0,1)}
			\chi\big(E\cap (v+P(u))\big) d \HM^d(u)d \HM^d(v) d \gm{H}{d}(V)\\
			&= C_2 \int_{V\in \grass{H}{d}}\int_{u\in W\cap \partial
			\cball(0,1)}\HM^d\big(P(u)_{\natural}^{\perp}(E)\big)/|V_{\natural}(u)|
			d \HM^d(u)d \gm{H}{d}(V).
		\end{aligned}
	\]

	Since $E$ is purely unrectifiable, we see that $\HM^d(T_{\natural}(E))=0$
	for $\gm{n}{d}$-a.e. $T\in \grass{n}{d}$, thus
	$\HM^d(T_{\natural}(E))=0$ for $\gm{W}{d}$-a.e. $T\in \grass{W}{d}$. We see
	that $P(u)^{\perp}=\{z\in W:z\perp u\}$. Thus 
	\[
		\int_{u\in W\cap \partial
		\cball(0,1)}\HM^d\big(P(u)_{\natural}^{\perp}(E)\big)
		d \HM^d(u)=\sigma_d\int_{T\in \grass{W}{d}}
		\HM^d(T_{\natural}(E))d \gm{W}{d}(T)=0.
	\]
	Hence
	\[
		\int_{V\in \grass{H}{d}}\int_{u\in W\cap \partial
		\cball(0,1)}\HM^d\big(P(u)_{\natural}^{\perp}(E)\big)
		d \HM^d(u)d \gm{H}{d}(V)=0.
	\]
	 Therefore $M(\cball)=0$, and $\Psi_x(E)$ is purely unrectifiable for
	$\HM^d$-a.e. $x\in \cball$.
\end{proof}
\begin{lemma}\label{le:projirr}
	Suppose $1\leq d< k\leq n$.
	Let $\Delta \subseteq \mathbb{R}^k$ be a closed convex polyhedron of dimension
	$k$. If $E\subseteq \Delta$ is  purely $d$-unrectifiable, $\HM^d$-measurable, 
	and satisfying that $\HM^k(\overline{E})=0$, then for $\HM^k$-a.e. $x\in \intr(\Delta)$, 
	$\Pi_{\Delta,x}(E)$ is purely $d$-unrectifiable.
\end{lemma}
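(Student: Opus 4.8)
The plan is to reduce, one facet at a time, to Lemma~\ref{le:projirrp}. Write $\partial\Delta=\Delta_1\cup\dots\cup\Delta_m$ as the finite union of its $(k-1)$-dimensional facets, and for each $i$ let $H_i$ be the hyperplane spanned by $\Delta_i$, so $\Delta$ lies in one of the two closed half-spaces bounded by $H_i$. Since $\Pi_{\Delta,x}(E)=\bigcup_{i=1}^{m}\bigl(\Pi_{\Delta,x}(E)\cap\Delta_i\bigr)$ and a finite union of purely $d$-unrectifiable sets is again purely $d$-unrectifiable, it suffices to prove: for every $i$ and for $\HM^{k}$-a.e.\ $x\in\intr(\Delta)$, the set $\Pi_{\Delta,x}(E)\cap\Delta_i$ is purely $d$-unrectifiable.

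Fix $i$. After applying a suitable isometry of $\mathbb{R}^{k}$ we may assume $H_i=\{0\}\times\mathbb{R}^{k-1}$ and $\Delta\subseteq[0,\infty)\times\mathbb{R}^{k-1}$; choose $M>0$ with $\Delta\subseteq\cball(0,M)$, so $\intr(\Delta)\subseteq(0,M)\times\mathbb{R}^{k-1}$. Let $\Psi_x$ be, as in Lemma~\ref{le:projirrp} with $n=k$, the central projection onto $\{0\}\times\mathbb{R}^{k-1}$ along the lines through $x$. Two elementary facts will be used: if $x\in\intr(\Delta)$ and $y\in E$ with $z:=\Pi_{\Delta,x}(y)\in\Delta_i$, then, writing $z=x+t(y-x)$ with $t\ge1$, the condition $z\in H_i$ forces $t=x_1/(x_1-y_1)$, hence $0\le y_1<x_1$ and $z=\Psi_x(y)$; moreover $t\,|y-x|=|z-x|\le 2M$, so $|y-x|\le 2M(x_1-y_1)/x_1$.

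The only point at which the configuration of Lemma~\ref{le:projirrp} is not immediately available is that the center $x$ need not lie strictly beyond the slab containing $E$; this is where the hypothesis $\HM^{k}(\overline E)=0$ enters. Since $\{x:\dist(x,\overline E)=0\}=\overline E$ is $\HM^{k}$-null, it is enough to handle $\HM^{k}$-a.e.\ $x\in\intr(\Delta)\setminus\overline E$. Given such an $x_0$, set $a=x_0^{(1)}>0$, $\delta^{*}=\dist(x_0,\overline E)>0$, $\delta_0=\delta^{*}/2$, and pick $\rho>0$ small enough that $\cball(x_0,\rho)\subseteq\intr(\Delta)$, $\rho<a/2$, $\rho<\delta^{*}/2$ and $\rho<\delta_0 a/(8M)$. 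For $x\in\cball(x_0,\rho)$ one has $x_1\ge a-\rho$ and $\dist(x,\overline E)\ge\delta_0$, hence $|y-x|\ge\delta_0$ for all $y\in E$; substituting into the estimate above, any $y\in E$ with $\Pi_{\Delta,x}(y)\in\Delta_i$ satisfies $x_1-y_1\ge\delta_0 x_1/(2M)\ge\delta_0(a-\rho)/(2M)$, and therefore $y_1\le b:=(a+\rho)-\delta_0(a-\rho)/(2M)$, with $b<a-\rho\le x_1$ by the choice of $\rho$. (If $b\le0$ there is no such $y$, or else $y\in E\cap H_i\subseteq E$ and the claim is trivial; so assume $b>0$.) Thus, putting $\tilde E:=E\cap\bigl([0,b]\times\mathbb{R}^{k-1}\bigr)$ — a bounded, $\HM^{d}$-measurable, purely $d$-unrectifiable set lying in a slab strictly below $\cball(x_0,\rho)$ — we obtain $\Pi_{\Delta,x}(E)\cap\Delta_i\subseteq\Psi_x(\tilde E)$ for every $x\in\cball(x_0,\rho)$.

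It remains to rescale and invoke Lemma~\ref{le:projirrp}. Let $T(z_1,\dots,z_k)=(z_1/b,z_2,\dots,z_k)$, a linear automorphism of $\mathbb{R}^{k}$ fixing $\{0\}\times\mathbb{R}^{k-1}$; then $T(\tilde E)\subseteq[0,1]\times\mathbb{R}^{k-1}$ is bounded, $\HM^{d}$-measurable and purely $d$-unrectifiable, $T(\cball(x_0,\rho))\subseteq(1,\infty)\times\mathbb{R}^{k-1}$, and $T\circ\Psi_x=\Psi_{T(x)}\circ T$. By Lemma~\ref{le:projirrp}, $\Psi_{\xi}(T(\tilde E))$ is purely $d$-unrectifiable for $\HM^{k}$-a.e.\ $\xi\in(1,\infty)\times\mathbb{R}^{k-1}$, so, pulling back through the bi-Lipschitz map $T$, the set $\Psi_x(\tilde E)=T^{-1}\bigl(\Psi_{T(x)}(T(\tilde E))\bigr)$, and hence its subset $\Pi_{\Delta,x}(E)\cap\Delta_i$, is purely $d$-unrectifiable for $\HM^{k}$-a.e.\ $x\in\cball(x_0,\rho)$. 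As $x_0$ runs over $\intr(\Delta)\setminus\overline E$, the balls $\cball(x_0,\rho)$ cover it, so by Lindel\"of's theorem countably many suffice; hence $\Pi_{\Delta,x}(E)\cap\Delta_i$ is purely $d$-unrectifiable for $\HM^{k}$-a.e.\ $x\in\intr(\Delta)$, and a final union over the finitely many facets finishes the proof. The main obstacle, as indicated, is the isolation of the sub-slab $\tilde E$ below the ball of admissible centers: this is what both the boundedness of $\Delta$ (to bound the factor $t$) and the hypothesis $\HM^{k}(\overline E)=0$ (to keep $\HM^{k}$-a.e.\ center at positive distance from $\overline E$) are there to make possible.
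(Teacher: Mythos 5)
Your proof is correct and follows essentially the same route as the paper's: discard the $\HM^{k}$-null set $\overline{E}$ of centers, localize to a small ball about an a.e.\ center lying at positive distance from $\overline{E}$, and reduce, after an affine normalization, to Lemma \ref{le:projirrp}. Your facet-by-facet decomposition and the quantitative estimate $x_1-y_1\ge \delta_0 x_1/(2M)$ (which confines the part of $E$ that can project onto a given facet to a slab strictly below the ball of centers) simply make explicit the reduction that the paper's terse proof leaves implicit.
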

\begin{proof}
	Let $A$ be the smallest affine space which contains $\Delta$. We put
	$O=\intr(\Delta)\setminus \overline{E}$. For any $x\in O$, there is a ball
	$\oball(x,r)$ such that $A\cap \oball(x,r)\subseteq O$. Since $O$ is open in
	$A$, there is positive number $\delta>0$ such that
	$\scale{x,\delta}(\Delta)\subseteq O$. We assume $A\cap
	\oball(x,r_1)\subseteq \Delta$, $r_1>0$, then $A\cap \oball(x,\delta
	r_1)\subseteq \scale{x,\delta}(\Delta)$. By Lemma \ref{le:projirrp}, for 
	$\HM^k$-a.e. $x\in \oball(x,\delta r_1)$, $\Pi_{\Delta,x}(E)$ is purely
	$d$-unrectifiable. Therefore, $\Pi_{\Delta,x}(E)$ is purely
	$d$-unrectifiable for $\HM^k$-a.e. $x\in O$. Since $\HM^k(\overline{E})=0$,
	we get that $\Pi_{\Delta,x}(E)$ is purely $d$-unrectifiable for $\HM^k$-a.e.
	$x\in \intr(\Delta)$.
\end{proof}
\begin{lemma}\label{le:projrec}
	Suppose $1\leq d< k\leq n$. There exists a constant $C=C(k,d)>0$ such that 
	for any closed convex polyhedron $\Delta$ of dimension $k$, $\HM^d$-measurable
	set $E\subseteq \Delta$, and $0<\beta<1$, we can find a ball
	$\cball_{\Delta}=\cball(x_0,r_0)$  and a set $Y_{\Delta}\subseteq 
	\cball(x_0,r_0)$ satisfying that $r_0\geq \mathfrak{r}(\Delta)\mathcal{R}
	(\Delta)/4$, $\cball(x_0,2r_0)\subseteq \Delta$, $\HM^k(Y_{\Delta})\geq
	(1-\beta) \omega_k r_0^k$, and for any $x\in Y_{\Delta}$
	\begin{equation}\label{eq:projrec1}
		\HM^d(\Pi_{\Delta,x}(E))\leq C \beta^{-1}\cdot 
		\mathcal{R}(\Delta)^{-2d}\HM^d(E).
	\end{equation}
\end{lemma}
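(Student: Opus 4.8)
The plan is to choose a single ball $\cball_\Delta$ adapted to $\Delta$ (all balls and the measures $\HM^k$, $\omega_k$ below being understood inside the $k$-dimensional affine hull of $\Delta$), prove one integral estimate of the shape
\[
 \int_{\cball_\Delta}\HM^d(\Pi_{\Delta,x}(E))\,d\HM^k(x)\le C(k,d)\,\mathcal R(\Delta)^{-2d}\,\HM^d(E)\,\HM^k(\cball_\Delta),
\]
and then extract \eqref{eq:projrec1} on a large subset of $\cball_\Delta$ by Chebyshev's inequality. One may assume $\HM^d(E)<+\infty$, the statement being trivial otherwise with $Y_\Delta=\cball_\Delta$. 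For the ball I would take $x_0$ to be the centre of a largest inscribed ball of $\Delta$ (so $\cball(x_0,r(\Delta))\subseteq\Delta$) and set $r_0=r(\Delta)/2=\mathfrak r(\Delta)\mathcal R(\Delta)/2$, $\cball_\Delta=\cball(x_0,r_0)$; then $r_0\ge\mathfrak r(\Delta)\mathcal R(\Delta)/4$, $\cball(x_0,2r_0)\subseteq\Delta$, and every $x\in\cball_\Delta$ satisfies $\dist(x,\partial\Delta)\ge r_0$ and $\Delta\subseteq\cball(x,2\mathfrak r(\Delta))$. Write $\mathfrak r=\mathfrak r(\Delta)$ and $\mathcal R=\mathcal R(\Delta)$ from now on.

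The first ingredient is an elementary Lipschitz estimate for radial projections: for $x\in\cball_\Delta$ and $y,y'\in\Delta$ with $|y-x|,|y'-x|\ge\varepsilon$,
\[
 |\Pi_{\Delta,x}(y)-\Pi_{\Delta,x}(y')|\le\frac{C_1(k)\,\mathfrak r^2}{r_0\,\varepsilon}\,|y-y'|.
\]
To get this I would translate so that $x=0$, write $\Pi_{\Delta,0}(y)=y/\|y\|_\Delta$ with $\|\cdot\|_\Delta$ the Minkowski gauge of $\Delta$, and use that $\cball(0,r_0)\subseteq\Delta\subseteq\cball(0,2\mathfrak r)$ makes $\|\cdot\|_\Delta$ both $(1/r_0)$-Lipschitz and comparable to $|\cdot|$ with constants $r_0$ and $2\mathfrak r$; expanding $y/\|y\|_\Delta-y'/\|y'\|_\Delta$ then yields the bound at once. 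The decisive features are that the blow-up rate involves $\dist(x,\partial\Delta)\ge r_0$ rather than $\diam\Delta$, and that the circumradius appears squared against a single power of $r_0$ — this is what ultimately produces the exponent $-2d$.

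For the integral estimate, fix $x\in\cball_\Delta$ and decompose $E\setminus\{x\}=\bigcup_{j\ge0}A_j^x$ with $A_j^x=E\cap\{\,2^{-j}\mathfrak r<|y-x|\le2^{-j+1}\mathfrak r\,\}$, which exhausts $E$ up to the single point $x$ because $\Delta\subseteq\cball(x,2\mathfrak r)$. On $A_j^x$ the map $\Pi_{\Delta,x}$ is $L_j$-Lipschitz with $L_j=C_1\mathfrak r 2^{j}/r_0$, so the elementary bound $\HM^d(f(S))\le(\Lip f)^{d}\HM^d(S)$ gives $\HM^d(\Pi_{\Delta,x}(E))\le\sum_{j\ge0}L_j^{\,d}\HM^d(A_j^x)=:\phi(x)$. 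The function $\phi$ is $\HM^k$-measurable, and integrating it over $\cball_\Delta$ by Fubini's theorem for the finite measures $\HM^k\mr\cball_\Delta$ and $\HM^d\mr E$, combined with $\HM^k\big(\{x\in\cball_\Delta:2^{-j}\mathfrak r<|x-y|\le2^{-j+1}\mathfrak r\}\big)\le\omega_k\min\{(2^{-j+1}\mathfrak r)^k,r_0^k\}$, yields
\[
 \int_{\cball_\Delta}\phi\,d\HM^k\le\frac{C_1^{\,d}\omega_k\mathfrak r^{d}}{r_0^{d}}\,\HM^d(E)\sum_{j\ge0}2^{jd}\min\{(2^{-j+1}\mathfrak r)^k,r_0^k\}.
\]
The series is governed by its terms near the crossover index $2^{j}\sim 4/\mathcal R$, and because $r_0=\mathfrak r\mathcal R/2$ a routine computation collapses the right-hand side to $C(k,d)\,\mathcal R^{-2d}\,\HM^d(E)\,\omega_k r_0^k$. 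Chebyshev's inequality then shows that $\{x\in\cball_\Delta:\phi(x)>C(k,d)\beta^{-1}\mathcal R^{-2d}\HM^d(E)\}$ has $\HM^k$-measure at most $\beta\omega_k r_0^k$; its complement $Y_\Delta$ in $\cball_\Delta$ then has $\HM^k(Y_\Delta)\ge(1-\beta)\omega_k r_0^k$ and, for every $x\in Y_\Delta$, $\HM^d(\Pi_{\Delta,x}(E))\le\phi(x)\le C(k,d)\beta^{-1}\mathcal R^{-2d}\HM^d(E)$, which is \eqref{eq:projrec1}.

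I expect the one genuinely delicate point to be the balancing of exponents: the estimate must be scale invariant and must not deteriorate when $E$ is tiny relative to $\Delta$, so all powers of $\mathfrak r$ have to cancel exactly. This is why the Lipschitz estimate must be of the precise form $\mathfrak r^2/(r_0\varepsilon)$, and why the hypothesis $d<k$ is indispensable — it is exactly what makes the shells $A_j^x$ near $x$, on which $L_j$ blows up, contribute a convergent series once weighed against the shrinking volume factor $2^{-jk}$. Checking that the constants from the Lipschitz bound and the dyadic bookkeeping conspire to give precisely $\mathcal R^{-2d}$, and verifying the measurability of $\phi$ used in the last step, is the part that requires attention; everything else is routine.
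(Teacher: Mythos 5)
Your proposal is correct, and its skeleton is the one the paper uses: place a ball $\cball(x_0,r_0)$ with $\cball(x_0,2r_0)\subseteq\Delta$ and $r_0$ comparable to $\mathfrak r(\Delta)\mathcal R(\Delta)$, average $\HM^d(\Pi_{\Delta,x}(E))$ over centers $x$ in that ball via Fubini, and conclude by Chebyshev. The difference lies in how the pointwise bound on $\HM^d(\Pi_{\Delta,x}(E))$ is produced. The paper factors $\Pi_{\Delta,x}=(p_x\vert_{\partial\Delta})^{-1}\circ p_x$ through the unit sphere, where $p_x(z)=(z-x)/|z-x|$, uses the kernel estimate $\HM^d(p_x(E))\leq\int_E|z-x|^{-d}\,d\HM^d(z)$ together with $\Lip\big((p_x\vert_{\partial\Delta})^{-1}\big)\lesssim R_0^2/r_0$, and then exploits $\int_{\cball(x_0,r_0)}|z-x|^{-d}\,d\HM^k(x)\leq C_1 r_0^{k-d}$ (this is where $d<k$ enters) to reach the same bound $C\,\mathcal R(\Delta)^{-2d}\HM^d(E)\,r_0^k$ before Chebyshev. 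You instead prove a direct Lipschitz estimate $\Lip\big(\Pi_{\Delta,x}\vert_{\{|y-x|\geq\varepsilon\}}\big)\lesssim\mathfrak r^2/(r_0\varepsilon)$ via the Minkowski gauge of $\Delta$ centered at $x$, decompose $E$ into dyadic annuli about $x$, and let the convergence of the resulting geometric series (again exactly the hypothesis $d<k$) play the role of the kernel integrability; your $\phi(x)$ is, up to constants, the paper's $\big(8R_0^2/r_0\big)^d\int_E|z-x|^{-d}\,d\HM^d(z)$. Your route is marginally more self-contained — it avoids having to justify both the spherical-projection kernel bound and the biLipschitz parametrization of $\partial\Delta$ by the sphere, and it makes the measurability needed for Chebyshev easy, since $\phi$ is a countable sum of functions of the form $x\mapsto(\HM^d\mr E)(\cball(x,r))-(\HM^d\mr E)(\cball(x,s))$, which are Borel — whereas the paper applies Chebyshev to $x\mapsto\HM^d(\Pi_{\Delta,x}(E))$ itself without comment. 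The bookkeeping you flag does close: with $r_0=\mathfrak r\mathcal R/2$ the prefactor $(\mathfrak r/r_0)^d$ gives one factor $\mathcal R^{-d}$ and the dyadic sum, dominated near the crossover $2^{j}\sim 4/\mathcal R$, gives the other, so the exponent $-2d$ and the constant $C(k,d)$ come out as required.
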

\begin{proof}
	Without loss of generality, we assume that $k=n$, $\diam(\Delta)\leq 1$ and
	$0<\HM^d(E)<+\infty$. 
	Take	$\cball(x_1,2r_0)\subseteq \Delta\subseteq \cball(x_2,R_0)$ such
	that $2r_0/R_0\geq \mathcal{R}(\Delta)/2$. For any $x\in \cball(x_1,r_0)$, 
	we define the mapping $p_x:\Delta\setminus \{x\}\to \mathbb{R}^n\cap 
	\partial \cball(0,1)$ by 
	\[
		p_x(z)=\frac{z-x}{|z-x|}.
	\]
	Then $p_x\vert_{\partial \Delta}: \partial \Delta \to \partial
	\cball(0,1)$ is biLipschitz, and $\Pi_{\Delta,x}=(p_x\vert_{\partial
	\Delta})^{-1}\circ p_x$. Since $\Lip(p_x\vert_{\partial \Delta})^{-1}\leq
	8R_0^2/r_0$ and $\|D p_x(z)\|=1/|z-x|$, we get that
	\[
		\HM^d(p_x(E))\leq \int_{z\in E}\|D p_x(z)\|^d d \HM^d(z)\leq \int_{z\in E}\frac{1}{|z-x|^d} d \HM^d(z),
	\]
	\[
		\HM^d(\Pi_{\Delta,x}(E))\leq \left(\Lip(p_x\vert_{\partial \Delta})^{-1}\right)^d
		\HM^d(p_x(E))\leq (8R_0^2/r_0)^d\int_{z\in E}\frac{1}{|z-x|^d} d
		\HM^d(z),
	\]
	and 
	\[
		\int_{x\in \cball(x_0,r_0)}\HM^d(\Pi_{\Delta,x}(E)) d \HM^k(x)\leq
		(8R_0^2/r_0)^d\int_{z\in E}\int_{x\in \cball(x_0,r_0)}\frac{1}{|z-x|^d}
		d \HM^k(x) d \HM^d(z).
	\]
	Since there exist a constant $C_1>0$ which only depends on $k$ and $d$ such
	that
	\[
		\int_{x\in \cball(x_0,r_0)}\frac{1}{|z-x|^d} d \HM^k(x)\leq C_1
		r_0^{k-d},
	\]
	we get that 
	\[
		\int_{x\in \cball(x_0,r_0)}\HM^d(\Pi_{\Delta,x}(E)) d \HM^k(x)\leq
		C_1(8R_0^2/r_0)^dr_0^{k-d}\HM^d(E)\leq 2^{7d}C_1
		\mathcal{R}(\Delta)^{-2d}\HM^d(E) r_0^{k}.
	\]
	Appling the Chebyshev's inequality, we get that 
	\[
		\HM^k(\{x\in \cball(x_0,r_0):\HM^d(\Pi_{\Delta,x}(E))\leq C
		\beta^{-1}\cdot \mathcal{R}(\Delta)^{-2d}\HM^d(E)\})\geq
		(1-\beta)\omega_k r_0^k,
	\]
	where $C=2^{7d}\omega_k^{-1} C_1$.
\end{proof}
\begin{lemma}\label{le:intden}
	Let $F:\mathbb{R}^n\times\grass{n}{d}\to \mathbb{R}_{+}$ be an integrand of
	class Baire 1, $\lambda:2^{\mathbb{R}^n}\to \mathbb{R}$ be any positive set
	function, and $X\subseteq \mathbb{R}^n$ be any $\HM^d$-measurable
	$d$-rectifiable set with $\HM^d(X)<\infty$ and $\Phi_{F,\lambda}(X)<\infty$.
	Then for $\HM^d$-a.e. $x\in X$,
	\begin{equation}\label{eq:intden}
		\lim_{r\to 0+}\frac{\Phi_{F,\lambda}(X\cap \cball(x,r))}{\omega_d r^d\cdot
		F(x,\Tan(X,x))}=1.
	\end{equation}
\end{lemma}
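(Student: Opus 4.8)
The plan is to reduce \eqref{eq:intden} to a statement about Lebesgue points of the density of a measure, using the rectifiability of $X$ together with the Baire 1 hypothesis on $F$. First I would fix a countable family $(F_j)$ of continuous functions on $\mathbb{R}^n\times\grass{n}{d}$ converging pointwise to $F$; since $F$ takes values in $(0,+\infty)$ and we only need the conclusion $\HM^d$-a.e., I may assume after passing to a Borel modification that $x\mapsto F(x,\Tan(X,x))$ is a Borel function, bounded and bounded away from $0$ on the portions of $X$ where $\Phi_F(X)<\infty$ forces integrability. The key classical input is the density theorem for rectifiable sets: for $\HM^d$-a.e.\ $x\in X$ one has $\lim_{r\to0+}\HM^d(X\cap\cball(x,r))/(\omega_d r^d)=1$, and moreover $X$ has an approximate tangent plane at such $x$, so that the blow-ups of $X$ at $x$ converge (as measures) to $\HM^d\mr\Tan(X,x)$.

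Next I would handle the integrand. Write $g(y)=F(y,\Tan(X,y))$ for $y\in X$. The measure $\mu=g\,\HM^d\mr X$ is finite (that is exactly $\Phi_F(X)<\infty$), so by the Lebesgue–Besicovitch differentiation theorem for the pair of measures $\mu$ and $\nu=\HM^d\mr X$, for $\HM^d$-a.e.\ $x\in X$ the limit $\lim_{r\to0+}\mu(\cball(x,r))/\nu(\cball(x,r))$ exists and equals $g(x)$; combined with the density statement this gives $\lim_{r\to0}\Phi_F(X\cap\cball(x,r))/(\omega_d r^d)=g(x)=F(x,\Tan(X,x))$, which is \eqref{eq:intden} once we know $F(x,\Tan(X,x))\in(0,\infty)$ — and that holds $\HM^d$-a.e.\ because $F>0$ everywhere and $g$ is $\mu$-a.e.\ finite. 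Care is needed only because $\Phi_F$ integrates $F(y,\Tan(X,y))$, not a fixed Borel function: I would note that $y\mapsto\Tan(X,y)$ is $\HM^d\mr X$-measurable (tangent planes depend measurably on the point for a rectifiable set), and that $F$, being Baire 1, is Borel measurable on the product, so the composition $g$ is genuinely $\HM^d\mr X$-measurable and the differentiation theorem applies verbatim.

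The main obstacle — or rather the only place where the Baire 1 assumption is actually used rather than mere Borel measurability — is to make sure the differentiation argument is not circular: we want the \emph{specific} Lebesgue value to be $F(x,\Tan(X,x))$ with the tangent plane of $X$ at $x$, not some averaged quantity over a varying family of tangent planes near $x$. Here I would exploit the approximate tangent plane: at a point $x$ of approximate tangency, for small $r$ almost all of the mass of $X\cap\cball(x,r)$ concentrates, after rescaling, near $\Tan(X,x)$, and the approximate tangent planes $\Tan(X,y)$ for $\HM^d$-a.e.\ $y$ in that portion converge to $\Tan(X,x)$ as $r\to0$ (this is continuity of $y\mapsto\Tan(X,y)$ at $\HM^d$-a.e.\ point, in the approximate sense, which is standard for rectifiable sets). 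Then, writing $F=F_j+(F-F_j)$ and using continuity of $F_j$ near $(x,\Tan(X,x))$ to control the $F_j$-part, while the remainder is controlled by $\int_{X\cap\cball(x,r)}|F-F_j|(y,\Tan(X,y))\,d\HM^d(y)$, whose density at $x$ can be made small by first choosing $j$ large (pointwise convergence plus dominated convergence on $\mu$) and invoking the differentiation theorem for the finite measure $|F-F_j|\,g^{-1}\mu$ — actually more simply for $|F-F_j|(y,\Tan(X,y))\HM^d\mr X$, which is finite since $F$ is bounded above. Balancing the two error terms and letting $r\to0$ then $j\to\infty$ yields \eqref{eq:intden}. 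The bounded-above hypothesis on $F$ is what guarantees the remainder measures are finite and the dominated convergence step is legitimate; without it one would need $\Phi_F(X)<\infty$ to do a little more bookkeeping, but that hypothesis is available too.
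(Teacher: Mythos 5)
Your second paragraph is exactly the paper's argument: the Baire 1 hypothesis together with the $\HM^d$-measurability of $x\mapsto(x,\Tan(X,x))$ makes $g(x)=F(x,\Tan(X,x))$ an $\HM^d\mr X$-measurable function, and then differentiating the finite measure $g\,\HM^d\mr X$ against $\HM^d\mr X$ and using the a.e.\ density $\HM^d(X\cap\cball(x,r))/(\omega_d r^d)\to 1$ (the paper cites Corollary 2.14 in \cite{Mattila:1995} for this step) gives \eqref{eq:intden}. The machinery in your final paragraph (approximate continuity of $y\mapsto\Tan(X,y)$, the splitting $F=F_j+(F-F_j)$) is superfluous, since $\Phi_F(X\cap\cball(x,r))$ is literally the integral of the fixed measurable function $g$ so no circularity arises; and note that, as sketched, that extra step has an order-of-limits slip (making the total mass of $|F-F_j|(\cdot,\Tan(X,\cdot))\,\HM^d\mr X$ small does not by itself control its density at a given point $x$), so it is fortunate that it is not needed.
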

\begin{proof}
	Since $X$ is $d$-rectifiable and $\HM^d(X)<\infty$, we get that the function
	$X\to X\times \grass{n}{d}$ given by $x\mapsto (x,\Tan(X,x))$ is
	$\HM^d$-measurable. Since $F$ is of class Baire 1, we get that the
	function $X\to \mathbb{R}$ given by $x\mapsto F(x,\Tan(X,x))$ is
	$\HM^d$-measurable. By Corollary 2.14 in \cite{Mattila:1995}, we get that
	\eqref{eq:intden} holds for $\HM^d$-a.e. $x\in X$.
\end{proof}
\begin{lemma} \label{le:mp}
	Let $A\subseteq \oball(x,r)$ be a convex set. Let $P$ be a $d$-plane
	through $x$. Suppose that $0<\varepsilon, \delta<1/2$, $A+\cball(0,\delta
	r)\subseteq \oball(x,r)$ and $A\subseteq P+\cball(0,\varepsilon r)$. Then 
	there is a Lipschitz mapping $\varphi:\mathbb{R}^n\to \mathbb{R}^n$ such that 
	\[
		\varphi\vert_{\mathbb{R}^n\setminus
		\oball(x,r)}=\id_{\mathbb{R}^n\setminus \oball(x,r)},\
		\varphi\vert_{A}=P_{\natural}\vert_A, \varphi(\oball(x,r))\subseteq
		\oball(x,r)\text{ and }\Lip(\varphi)\leq 3+\varepsilon/\delta.
	\]
\end{lemma}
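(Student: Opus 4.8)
The plan is to write down an explicit interpolation between the orthogonal projection $P_\natural$ on $A$ and the identity outside $\oball(x,r)$, using a single scalar cutoff that switches off across a collar of width $\delta r$ around $A$. After a translation I may assume $x=0$, so that $P$ is a linear subspace and $q:=P_\natural^\perp=\id-P_\natural$ is the linear orthogonal projection onto the orthogonal complement of $P$; in particular $\Lip(q)\le1$, $q(y)\perp P_\natural(y)$ and $|q(y)|\le|y|$, the hypothesis $A\subseteq P+\cball(0,\varepsilon r)$ reads $|q(y)|\le\varepsilon r$ for $y\in A$, and hence $|q(y)|\le(\varepsilon+\delta)r$ for every $y$ in the closed $\delta r$-neighbourhood $K:=A+\cball(0,\delta r)$ of $A$. (The hypothesis $A+\cball(x,\delta r)\subseteq\oball(x,r)$, which I read as $A+\cball(0,\delta r)\subseteq\oball(x,r)$, says exactly $K\subseteq\oball(0,r)$.) I would then take $\theta\colon\mathbb{R}^n\to[0,1]$, $\theta(y)=\max\{0,\,1-\dist(y,A)/(\delta r)\}$, which is $(\delta r)^{-1}$-Lipschitz, equals $1$ on $A$ and vanishes off $K$, and set
\[
	\varphi(y)=y-\theta(y)\,q(y).
\]

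The three pointwise requirements are then immediate. Since $\theta$ vanishes off $K\subseteq\oball(0,r)$, $\varphi=\id$ on $\mathbb{R}^n\setminus\oball(0,r)$; on $A$ we have $\theta\equiv1$, so $\varphi=\id-q=P_\natural$ there; and for $y\in\oball(0,r)$ the orthogonal decomposition $\varphi(y)=P_\natural(y)+(1-\theta(y))q(y)$ gives $|\varphi(y)|^2=|P_\natural(y)|^2+(1-\theta(y))^2|q(y)|^2\le|y|^2$, so $\varphi$ maps $\oball(0,r)$ into itself. The only real work is the Lipschitz bound, which reduces to $\Lip(\varphi)\le1+\Lip(g)$ with $g=\theta\,q$. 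I would split into cases: if $y_1,y_2\notin K$ then $g$ vanishes at both; if $y_1,y_2\in K$ then from $g(y_1)-g(y_2)=\theta(y_1)(q(y_1)-q(y_2))+(\theta(y_1)-\theta(y_2))q(y_2)$ together with $\theta\le1$, $\Lip(q)\le1$, $\Lip(\theta)\le(\delta r)^{-1}$ and $|q(y_2)|\le(\varepsilon+\delta)r$ one gets $|g(y_1)-g(y_2)|\le(2+\varepsilon/\delta)|y_1-y_2|$; and in the mixed case $y_1\in K$, $y_2\notin K$ one has $g(y_2)=0$ and $\dist(y_2,A)\ge\delta r$, so $\theta(y_1)=(\delta r-\dist(y_1,A))/(\delta r)\le|y_1-y_2|/(\delta r)$ because $\dist(\cdot,A)$ is $1$-Lipschitz, giving $|g(y_1)|\le(1+\varepsilon/\delta)|y_1-y_2|$. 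In all cases $\Lip(g)\le2+\varepsilon/\delta$, hence $\Lip(\varphi)\le3+\varepsilon/\delta$.

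The construction itself is routine; the one step that needs care is the mixed case of the Lipschitz estimate, where the cutoff value $\theta(y_1)$ must be traded against the distance $|y_1-y_2|$ using that $\dist(\cdot,A)$ is $1$-Lipschitz. This is precisely what forces the collar width to be of order $\delta r$ and produces the $\varepsilon/\delta$ term in the final bound, while the choice of the orthogonal displacement $q$ is what makes the self-map property of $\oball(x,r)$ come for free.
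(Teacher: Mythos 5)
Your proof is correct and follows essentially the same route as the paper: the same map $z\mapsto z-\eta(\dist(z,A)/(\delta r))P_{\natural}^{\perp}(z)$ with a linear cutoff over the $\delta r$-collar of $A$, and the same bookkeeping (cutoff slope $1/(\delta r)$ against $|P_{\natural}^{\perp}|\leq(\varepsilon+\delta)r$ on the collar) giving $\Lip(\varphi)\leq 3+\varepsilon/\delta$. You are in fact slightly more explicit than the paper in checking $\varphi\vert_A=P_{\natural}\vert_A$, the self-map property of $\oball(x,r)$ via the orthogonal decomposition, and the mixed case of the Lipschitz estimate.
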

\begin{proof}
	Let $\eta:\mathbb{R}\to \mathbb{R}$ be the function defined by 
	\[
		\eta(t)=\begin{cases}
			1,&t\leq 0,\\
			1-t/\delta,& 0<t\leq \delta,\\
			0,& t> \delta.
		\end{cases}
	\]
	Then $\eta$ is Lipschitz with $\Lip(\eta)=1/\delta$.
	Let $\varphi:\mathbb{R}^n\to \mathbb{R}^n$ be the mapping defined by 
	\[
		\varphi(z)=z-\eta(\dist(z,A)/r)P_{\natural}^{\perp}(z),\ \forall z\in
		\mathbb{R}^n.
	\]
	For any $z_1,z_2\in \mathbb{R}^n$, if $\min\{\dist(z_1,A),\dist(z_2,A)\}\geq
	\delta r$, then $\varphi(z_1)=z_1$ and $\varphi(z_2)=z_2$, thus
	$|\varphi(z_1)-\varphi(z_2)|=|z_1-z_2|$.
	If $\min\{\dist(z_1,A),\dist(z_2,A)\}< \delta r$, we assume that
	$\dist(z_2,A)< \delta r$, then $|P_{\natural}^{\perp}(z_2)|\leq
	(\varepsilon+\delta)r$ and
	\[
		\begin{aligned}
			\varphi(z_1)-\varphi(z_2)&=(z_1-z_2)-
			\eta(\dist(z_1,A)/r)P_{\natural}^{\perp}(z_1-z_2)\\
			&\quad+
			P_{\natural}^{\perp}(z_2)(\eta(\dist(z_1,A)/r)-\eta(\dist(z_2,A)/r)),
		\end{aligned}
	\]
	thus 
	\[
		\begin{aligned}
			|\varphi(z_1)-\varphi(z_2)|&\leq 2|z_1-z_2|+|
			P_{\natural}^{\perp}(z_2)| \Lip(\eta) \cdot \frac{1}{r}
			|\dist(z_1,A)-\dist(z_2,A)|\\
			&\leq (3+\varepsilon/\delta)|z_1-z_2|.
		\end{aligned}
	\]
	We get that $\varphi$ is Lipschitz and $\Lip(\varphi)\leq 3+\varepsilon/\delta$. 
\end{proof}
\begin{theorem}
	\label{thm:polyapp}
	Let $F_{\ell}:\mathbb{R}^n\times \grass{n}{d}\to \mathbb{R}_+$, $\ell\in
	\mathbb{Z}\cap [1,\ell_0]$, be a family of integrands of class Baire 1 and 
	bounded above. Let $\lambda_{\ell}$, $\ell\in \mathbb{Z}\cap [1,\ell_0]$, be
	positive set functions bounded above.
	Let $U\subseteq \mathbb{R}^n$ be an open set. Then there exist constants 
	$c_1=c_1(n)>0$, and $c_2=c_2(n,d)>0$ such that for any $\varepsilon_1>0$, $\varepsilon_2>0$, compact set
	$K\subseteq U$, and $\HM^d$-measurable set $X$ satisfying that $\HM^d(X\cap
	U)<\infty$ and $\HM^{d+1}(\overline{X}\cap U)=0$, we 
	can find a polyhedral complex $\mathcal{K}$ and a Lipschitz mapping
	$\varphi:\mathbb{R}^n\to \mathbb{R}^n$ satisfying that
	\begin{enumerate}[label*=(\arabic*),ref=\emph{(\arabic*)}]
		\item \label{polyapp1}$\mathcal{R}(\mathcal{K})\geq c_1,
			\mathfrak{r}(\mathcal{K})\leq \varepsilon_1$, $|\mathcal{K}|\supseteq K$;
		\item \label{polyapp2} $\varphi\vert_{\mathbb{R}^n\setminus U}=
			\id_{\mathbb{R}^n\setminus U}$, $\varphi(U)\subseteq U$, 
			$\varphi\vert_{U}$ is homotopic to $\id_U$,
			and $\|\varphi-\id\|_{\infty}\leq \varepsilon_1$;
		\item \label{polyapp3} $\varphi(X)\cap |\mathcal{K}|\subseteq
			|\mathcal{K}_d|$ and $|\mathcal{K}_d|\setminus
			\overline{\varphi(X)}=\emptyset$ for
			some sub-complex $\mathcal{K}_d$ with $\dim(\mathcal{K}_d)\leq d$;
		\item \label{polyapp4} $\Phi_{F_{\ell},\lambda_{\ell}}\left(\varphi(X\cap
			U)\right)\leq \Phi_{F_{\ell},\lambda_{\ell}}(X_{rec})+\varepsilon_2$, 
			where $X_{rec}$ is the $d$-rectifiable part of $X\cap U$;
		\item \label{polyapp5} $\HM^d(\varphi(X\cap P))\leq c_2\HM^d(X\cap P)$ for any
			$P\in \mathcal{K}$. 
	\end{enumerate}
\end{theorem}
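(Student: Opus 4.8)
The plan is to split $X\cap U$ into its $d$-rectifiable part $X_{rec}$ and its purely $d$-unrectifiable part $X_{irr}$, to annihilate the latter --- push it into an $\HM^{d}$-negligible subset of the $d$-skeleton via a radial (Almgren--Federer--Fleming) deformation --- and to handle the former by first ``flattening'' it onto a finite family of flat $d$-disks lying in its approximate tangent planes, building the polyhedral complex $\mathcal K$ \emph{after} these disks are known so that they sit in its $d$-skeleton. The map $\varphi$ will be a composition $\varphi=\varphi_{2}\circ\varphi_{1}$ of the flattening map $\varphi_{1}$ with the radial deformation $\varphi_{2}$.

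I would first prepare the rectifiable part. Since $F_{\ell}$ is bounded above and $\HM^{d}(X\cap U)<\infty$ we have $\Phi_{F_{\ell},\lambda_{\ell}}(X_{rec})<\infty$ for each $\ell$, and since $F_{\ell}$ is of class Baire~$1$ the map $x\mapsto F_{\ell}(x,\Tan(X_{rec},x))$ is $\HM^{d}$-measurable; Lusin's theorem then lets me discard a subset of $X_{rec}$ of arbitrarily small $\HM^{d}$-measure and assume $X_{rec}$ is a finite union of compact pieces on each of which $\Tan(X_{rec},\cdot)$ stays within a prescribed angle of a fixed $d$-plane and each $F_{\ell}(\cdot,\Tan(X_{rec},\cdot))$ is continuous. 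Applying Lemma~\ref{le:intden} on these pieces, for $\HM^{d}$-a.e.\ point $x$ there is a scale $r(x)>0$ such that inside $\cball(x,r(x))$ the set $X_{rec}$ is $\varepsilon$-flat with respect to $T=\Tan(X_{rec},x)$, has $\HM^{d}$-density close to $1$, and $\Phi_{F_{\ell}}(X_{rec}\cap\cball(x,r))$ lies within a factor $1+\varepsilon$ of $\omega_{d}r^{d}F_{\ell}(x,T)$ for all $\ell$. A Vitali covering argument selects finitely many pairwise disjoint such balls covering $X_{rec}$ up to $\HM^{d}$-measure $<\eta$, and inside each one Lemma~\ref{le:mp} supplies a Lipschitz map, equal to the identity off the ball, homotopic to it, and with controlled Lipschitz constant, which projects the $\varepsilon$-flat portion of $X_{rec}$ onto a flat $d$-disk in $x+T$. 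Composing these finitely many maps gives $\varphi_{1}$; the images of the selected balls form a finite family $\{D_{i}\}$ of flat $d$-disks with $\sum_{i}\HM^{d}(D_{i})\le(1+\varepsilon)\HM^{d}(X_{rec})$ and, for each $\ell$, $\sum_{i}\Phi_{F_{\ell}}(D_{i})\le\Phi_{F_{\ell}}(X_{rec})+\varepsilon_{2}/2$, while $\varphi_{1}$ leaves untouched a leftover $R\subseteq X\cap U$ with $\HM^{d}(R\cap X_{rec})<\eta$ together with all of $X_{irr}$.

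Next I would build the complex and the radial deformation. Using the construction of the second author \cite{Feuvrier:2008}, I choose a polyhedral complex $\mathcal K$ with $\mathcal R(\mathcal K)\ge c_{1}(n)$, $\mathfrak r(\mathcal K)\le\varepsilon_{1}$, $|\mathcal K|\supseteq K$, and such that each $D_{i}$ lies in the realization of a subcomplex of dimension $\le d$; adapting a uniformly rotund complex to the finitely many affine directions carrying the $D_{i}$ is exactly what Feuvrier's machinery delivers. Then I apply the radial deformation to $\varphi_{1}(X\cap U)$: descending from dimension $n$ down to $d+1$, in each cell $\Delta$ of dimension $>d$ I pick a centre $x_{\Delta}\in\intr(\Delta)$ admissible \emph{simultaneously} for Lemma~\ref{le:projrec} --- giving $\HM^{d}(\Pi_{\Delta,x_{\Delta}}(S\cap\Delta))\le C\mathcal R(\Delta)^{-2d}\HM^{d}(S\cap\Delta)$ for the relevant $S$, whence the dimensional constant $c_{2}(n,d)$ --- and for Lemma~\ref{le:projirr} applied to the purely unrectifiable part, so that the latter stays purely $d$-unrectifiable (here $\HM^{d+1}(\overline{X}\cap U)=0$, which persists under the Lipschitz maps involved, is used to meet the hypothesis $\HM^{k}(\overline{E})=0$); such $x_{\Delta}$ fill a set of positive measure by Lemmas~\ref{le:projrec}, \ref{le:projirr} and \ref{le:projirrp}. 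Set $\varphi=\varphi_{2}\circ\varphi_{1}$ with $\varphi_{2}$ the resulting map; since $\varphi_{2}$ fixes $|\mathcal K_{d}|$ it leaves the $D_{i}$ in place, and pruning $\mathcal K_{d}$ to the cells actually met by $\varphi(X)$ yields \ref{polyapp3}. Now \ref{polyapp1} and \ref{polyapp3} are built in; \ref{polyapp2} holds because each elementary move is homotopic to the identity relative to the complement of one small ball or cell and has small displacement; \ref{polyapp5} holds because on the disks $\varphi_{1}$ is an orthogonal projection and is otherwise Lipschitz with a bounded constant, $\varphi_{2}$ contributes the per-cell factor of Lemma~\ref{le:projrec}, and $\varphi$ is the identity on the rest of $X$; and \ref{polyapp4} holds because $\varphi(X\cap U)\subseteq|\mathcal K_{d}|$ is $d$-rectifiable, so $\Phi_{F_{\ell},\lambda_{\ell}}(\varphi(X\cap U))=\int_{\varphi(X\cap U)}F_{\ell}(y,\Tan)\,d\HM^{d}(y)$, which splits into the contribution of the $D_{i}$ ($\le\Phi_{F_{\ell}}(X_{rec})+\varepsilon_{2}/2$), the contribution of $\varphi(R\cap X_{rec})$ ($\le(\sup F_{\ell})\,c_{2}\,\HM^{d}(R\cap X_{rec})\le(\sup F_{\ell})\,c_{2}\,\eta<\varepsilon_{2}/2$ for $\eta$ small, since $F_{\ell}$ is bounded above), and the contribution of $\varphi(X_{irr})$, which vanishes as $\varphi(X_{irr})$ is a purely $d$-unrectifiable subset of the $d$-rectifiable set $|\mathcal K_{d}|$.

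The main obstacle is \ref{polyapp4}: controlling the $\Phi_{F_{\ell}}$-energy of the flattened rectifiable part when the integrands are only of class Baire~$1$ rather than continuous. This is where Lemma~\ref{le:intden} is indispensable: one must choose the Vitali cover so that each ball centre is at once a density point of $X_{rec}$, a point where the energy density of $X_{rec}$ equals $F_{\ell}(x,\Tan(X_{rec},x))$ for every $\ell$ in the sense of Lemma~\ref{le:intden}, and a good point for the target tangent plane, and then exploit the Baire~$1$ structure --- through the Lusin reduction above together with the density identity --- to ensure that projecting each $\varepsilon$-flat piece onto its tangent plane inflates the energy by at most the prescribed amount. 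Reconciling this refined choice of scales with the combinatorial demands on $\mathcal K$ (uniform rotundity, mesh $\le\varepsilon_{1}$, containing $K$, adapted to the $D_{i}$) and with the requirement that $\varphi$ be both close and homotopic to the identity is the delicate point of the argument.
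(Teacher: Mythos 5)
Your strategy is essentially the paper's: flatten the rectifiable part onto tangent $d$-disks through a Vitali family of balls using Lemma \ref{le:intden} and Lemma \ref{le:mp}, merge dyadic grids adapted to the finitely many tangent directions into a uniformly rotund complex via Feuvrier's theorem, then apply radial projections cell by cell, with centres chosen simultaneously by Lemma \ref{le:projrec} and Lemma \ref{le:projirr} so that the purely unrectifiable part stays purely unrectifiable and is therefore annihilated once it lands in the rectifiable $d$-skeleton. However, two steps are not justified as written.

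First, your accounting for \ref{polyapp4} rests on the claims that $\varphi(X\cap U)\subseteq|\mathcal{K}_d|$ and that the contribution of $\varphi(X_{irr})$ vanishes. But $\varphi$ is the identity off $|\mathcal{K}|$ and off the finitely many Vitali balls, and your complex is only required to contain $K$ and the disks $D_i$; the part of $X_{irr}$ (and of the rectifiable leftover) lying in $U\setminus|\mathcal{K}|$ is untouched and contributes $\lambda_{\ell}(\cdot)\HM^d(\cdot)$ to $\Phi_{F_{\ell},\lambda_{\ell}}(\varphi(X\cap U))$, which need not be small: $K$ is an arbitrary compact subset of $U$, and $X_{irr}$ may carry most of its measure far from $K$ and from $X_{rec}$. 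The paper repairs exactly this by first choosing a compact set $K_0\supseteq K$ with $\HM^d(X\cap U\setminus K_0)\leq\varepsilon$ and building the complex over a neighbourhood of $K_0$, so that the untreated portion costs only $b\varepsilon$; without this exhaustion step your estimate \ref{polyapp4} fails. Second, pruning $\mathcal{K}_d$ to the cells met by $\varphi(X)$ does not give the second half of \ref{polyapp3}: a $d$-cell can meet $\varphi(X)$ without being contained in $\overline{\varphi(X)}$. Your radial deformation stops at dimension $d+1$; one more projection is needed, inside each $d$-cell whose interior meets the deformed set but is not contained in its closure, centred at an interior point off that closure (the paper's $\varphi_{n-d+1}$), which empties the partially covered $d$-cells; only then does $|\mathcal{K}_d|\setminus\overline{\varphi(X)}=\emptyset$ hold. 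Both repairs fit inside your framework, so the approach is sound, but as stated \ref{polyapp3} and \ref{polyapp4} are not established.
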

\begin{proof}
	Since $F_{\ell}$ and $ \lambda_{\ell}$ are bounded above, we assume that
	$F_{\ell}\leq b <\infty$ and $\lambda_{\ell}\leq b<+\infty$ for any $1\leq 
	\ell \leq \ell_0$. We fix a small number $\varepsilon>0$, and chose it later.
	We take compact set $K_0$ and positive number $0<\delta_0< \min\{
	\varepsilon_1, 1\}$ such that $K\subseteq K_0\subseteq K_0+
	\oball(0,\delta_0)\subseteq U$ such that
	\[
		\HM^d(X\cap U\setminus K_0 )\leq \varepsilon.
	\]
	Let $\{C_i\}_{i\in I}$ be the family of finite many standard dyadic cubes with
	sidelength $2^{-k}\leq (100\sqrt{n})^{-1}\delta_0$ such that 
	\[
		8C_i\cap K_0\neq \emptyset,\ \forall i\in I.
	\]
	We write $X\cap U=X_{rec}\cup X_{irr}$, where $X_{rec}$ is $d$-rectifiable and
	$\HM^d$-measurable, $X_{irr}$ is purely $d$-unrectifiable and $\HM^d$-measurable. 
	By Lemma \ref{le:intden} and the existence of approximate tangent planes almost
	everywhere for rectifiable sets, we get that for $\HM^d$-a.e. $x\in X_{rec}$,
	there exist $d$-plane $P_x$ and $r_{\varepsilon}(x)>0$ such that 
	$\oball(x,2r_{\varepsilon}(x)) \subseteq U$ and for any $0<r<r_{\varepsilon}(x)$,
	\[
		\left|\frac{\Phi_{F_{\ell},\lambda_{\ell}}(X_{rec}\cap \cball(x,r))}{\omega_d r^d
		F_{\ell}(x,P_x)}-1\right|\leq \varepsilon,\ 
		\left|\frac{\HM^d(X_{rec}\cap \cball(x,r))}{\omega_d r^d}-1\right|\leq
		\varepsilon,
	\]
	\[
		\frac{\HM^d(X_{rec}\cap \cball(x,r)\setminus
		\mathcal{C}(x,P_x, r,\varepsilon))}{\omega_d r^d}\leq \varepsilon,\
		\frac{\HM^d(X_{irr}\cap\cball(x,r))}{\omega_d r^d}\leq \varepsilon,
	\]
	where $\mathcal{C}(x,P_x,r, \varepsilon)=\{z\in \cball(x,r):
	\dist(z-x,P_x)\leq \varepsilon |z-x|\}$, we denote by $X_{\varepsilon}$ all of such 
	points $x\in X_{rec}$. Then we see that 
	\[
		\left\{\cball(x,r):x\in X_{\varepsilon},
		0<r<\min\left\{r_{\varepsilon}(x),2^{-k}\right\}\right\} 
	\]
	is a Vitali covering of $X_{\varepsilon}$. By Vitali covering theorem,we can
	find a countable family disjoint balls $\{\cball(x_j,r_j)\}_{j\in J}$ 
	such that 
	\[
		\HM^d\left(X_{\varepsilon}\setminus \bigcup_{j\in J}\cball(x_j,r_j)\right)=0.
	\]
	Thus we can find finite many disjoint balls $\{\cball_j=\cball(x_j,r_j)\}_{1\leq
	j\leq	N}$ such that 
	\[
		\HM^d\left(X_{\varepsilon}\setminus \bigcup_{j=1}^{N}\cball_j\right)\leq
		\varepsilon .
	\]
	We put $P_j=P_{x_j}$, $A_j=\cball_j\cap (P_j+\cball(0,\varepsilon))$,
	$A_j'=\cball(x_j,(1-2 \varepsilon) r_j)\cap (P_j+\cball(0, \varepsilon r_j))$
	and $\oball_j=\oball(x_j,r_j)$. By Lemma \ref{le:mp}, there is a Lipschitz
	mapping	$\psi_j:\mathbb{R}^n\to \mathbb{R}^n$ such that 
	\begin{equation}\label{eq:polyapp105}
		\psi_j\vert_{\mathbb{R}^n\setminus \oball_j}=\id_{\mathbb{R}^n\setminus
		\oball_j}, \psi_j\vert_{A_j'}=(P_j)_{\natural}\vert_{A_j'},
		\psi_j(\oball_j)\subseteq \oball_j \text{ and }\Lip(\psi_j)\leq 4.
	\end{equation}
	We put $\psi=\psi_N\circ\cdots\circ \psi_1$. Since $\cball_j$, $1\leq j\leq N$,
	are disjoint, we have that $\Lip(\psi)\leq 4$,
	$\|\psi-\id\|_{\infty}\leq \max\{r_j:1\leq j\leq N\}\leq 2^{-k}$,
	$\psi(\cball_j)\subseteq \cball_j$ and $\psi(x)=x$ for $x\in \mathbb{R}^n
	\setminus \cup_{1\leq j\leq N}\cball_j$. Thus
	\begin{equation}\label{eq:polyapp120}
		\begin{aligned}
			\HM^d(X_{irr}\sd \psi(X_{irr}))&\leq \left(1+\Lip(\psi)^d\right)
			\sum_{j=1}^{N} \HM^d(X_{irr}\cap \cball_j)\leq (1+4^d)
			\varepsilon\sum_{j=1}^{N}\omega_dr_j^{d}\\
			&\leq (1+4^d)\frac{\varepsilon}{1-\varepsilon}
			\sum_{j=1}^{N}\HM^d(X_{rec}\cap \cball_j)\leq 2(1+4^d) 
			\HM^d(X_{rec})\varepsilon.
		\end{aligned}
	\end{equation}
	Since $\psi\vert_{A_j'}=\psi_j\vert_{A_j'}=(P_j)_{\natural}\vert_{A_j'}$, we
	get that
	\begin{equation}\label{eq:polyapp122}
		\HM^d(X_{rec}\cap \cball_j\setminus A_j')\leq (1+\varepsilon) \omega_d
		r_j^{d} -(1-\varepsilon)\omega_d ((1-2 \varepsilon)r_j)^d + \varepsilon
		\omega_d r_j^d\leq 2(d+1) \varepsilon \omega_d r_j^d
	\end{equation}
	and 
	\[
		\begin{aligned}
			\Phi_{F_{\ell},\lambda_{\ell}}(\psi(X_{rec}\cap \cball_j))&\leq
			\Phi_{F_{\ell},\lambda_{\ell}}(\psi(X_{rec}\cap A_{j'}))+
			\Lip(\psi)^db\HM^d(X_{rec}\cap \cball_j\setminus A_j')\\
			&\leq \omega_d(1-2 \varepsilon)^d r_j^d F(x_j,P_j)+2\cdot 4^d (d+1)b
			\varepsilon \omega_d r_j^d \\
			&\leq \Phi_{F_{\ell},\lambda_{\ell}}(X_{rec}\cap \cball_j)+ 2\cdot 4^d (d+1)
			b\varepsilon \omega_d r_j^d,
		\end{aligned}
	\]
	by setting $B=\cup_{1\leq j\leq N}\cball_j$, $\eta_1=2\cdot 4^d
	(d+1)b$, we have that $\psi(B)\subseteq B$, $\psi(x)=x$ for $x\in
	\mathbb{R}^n\setminus B$, and
	\begin{equation}\label{eq:polyapp130}
		\Phi_{F_{\ell},\lambda_{\ell}}(\psi(X_{rec}\cap B))\leq
		\Phi_{F_{\ell},\lambda_{\ell}}(X_{rec}\cap B) +\eta_1 \varepsilon
		\sum_{j=1}^N \omega_d r_j^{d}\leq \Phi_{F_{\ell},\lambda_{\ell}}(X_{rec})
		+2\eta_1 \HM^d(X_{rec})\varepsilon.
	\end{equation}

	For any $1\leq j\leq N$, let $\{S_{j,i}\}_{i\in I_j}$ be a family of finite 
	many dyadic cubes which are parallel to $P_j$ and of sidelength $2^{-k_j}\leq
	(100\sqrt{n})^{-1}\varepsilon r_j$ and have nonempty intersection with
	$A_j'$. We put $\mathcal{O}=\cup_{i\in I}C_i$ and $k_0=\max\{k_j:1\leq j\leq N\}$. 
	Let $\{S_{0,i}\}_{i\in I_0}$ be the collection of standard dyadic cubes $C$
	of sidelength $2^{-k_0}$ such that the $k$-th ancestor $\widehat{C}$ of
	$C$ is contained in $\{C_i\}_{i\in I}$ and $\widehat{C}\cap \cball_j=
	\emptyset$ for any $1\leq j\leq N$. Let $\mathcal{K}_j$, 
	$0\leq j\leq N$ be the polyhedral complex associate with 
	$\{S_{j,i}\}_{i\in I_j}$. Applying Th\'eor\`eme 1 in \cite{Feuvrier:2012}, 
	there is a polyhedral complex $\mathcal{K}'$ and a
	constant $c_1=c_1(n)>0$ such that $\mathcal{K}_j$, $1\leq j\leq N$, are 
	subcomplexes of $\mathcal{K}'$, $|\mathcal{K}'|=\mathcal{O}$,
	$\mathcal{R}(\mathcal{K}')\geq c_1$, and $\mathfrak{r}(\mathcal{K}')\leq
	\sqrt{n}\max\{2^{-k_j}:0\leq j\leq N\}$.
	For any $\Delta\in \mathcal{K}'$, we denote $\mathring{\Delta}=\Delta\setminus
	\cup\{\beta\in \mathcal{K}',\dim(\beta)<\dim(\Delta)\}$. We denote by
	$\mathcal{K}^{(\ell)}$ the collection of all simplexes $\Delta\in
	\mathcal{K}'$ such that $\dim(\Delta)=\ell$ and $\intr(\Delta)\cap \partial
	\mathcal{O}=\emptyset$. We set
	$\mathcal{K}=\{\Delta\in \mathcal{K}':
	\Delta\cap \partial \mathcal{O}=\emptyset\}$. Then \ref{polyapp1} holds for
	complex $\mathcal{K}$.

	We put $E=\psi(X)$. Then $E$ is also
	$\HM^d$-measurable. Since $\psi$ is Lipschitz and $\{x\in
	\mathbb{R}^n:\psi(x)\neq x\}$ is bounded, we get that
	$\psi(\overline{X})=\overline{\psi(X)}$ and $\HM^{d+1}(\overline{E}\cap
	U)=0$. We write $E\cap U=E_{rec}\cup E_{irr}$, where
	$E_{irr}\subseteq \psi(X_{irr})$ is purely $d$-unrectifiable and
	$\HM^d$-measurable, $E_{rec}$ is $d$-rectifiable and $\HM^d$-measurable.
	Then 
	\[
		\HM^d(E_{irr}\cap \mathcal{O})\leq \HM^d(X_{irr}\cap \mathcal{O})+\eta_1\HM^d(X_{irr}) \varepsilon,
	\] 
	and 
	\[
		\Phi_{F_{\ell},\lambda_{\ell}}(E_{rec}\cap \mathcal{O})\leq
		\Phi_{F_{\ell},\lambda_{\ell}}(X_{rec}\cap \mathcal{O})+2\eta_1\HM^d(X_{rec}) \varepsilon.
	\]

	Applying Lemma \ref{le:projrec} with $\beta=3/4$, there is a constant
	$\alpha_1=\alpha_1(n,d)>0$ such that for any $\Delta\in \mathcal{K}^{(n)}$, 
	$x\in Y_{\Delta}\setminus \overline{E}$ and any $\HM^d$-measuable set
	$E'\subseteq E$, 
	\begin{equation}\label{eq:polyapp200}
		\HM^d(\Pi_{\Delta,x}(E'\cap \Delta))\leq
		\alpha_1c_1^{-2d}\HM^d(E'\cap \Delta).
	\end{equation}
	By Lemma \ref{le:projirr}, we get that $\Pi_{\Delta,x}(E_{irr}\cap \Delta)$ is
	purely $d$-unrectifiable for $\HM^n$-a.e. $x\in Y_{\Delta}$. So we pick one
	point $x_{\Delta}\in Y_{\Delta}\setminus \overline{E}$ such that  
	$\Pi_{\Delta,x_{\Delta}}(E_{irr} \cap \Delta)$ is purely $d$-unrectifiable 
	and \eqref{eq:polyapp200} holds.

	Since $\mathring{\Delta}\setminus \overline{E}$ is open, we can find small 
	open ball $\oball_{\Delta}=\oball(x_{\Delta}, r_{x_{\Delta}})$ such that
	$\oball_{\Delta}\cap \Delta\subseteq \mathring{\Delta}\setminus \overline{E}$.
	By Lipschitz extension theorem, we can find a Lipschitz mapping
	$\varphi_1:\mathbb{R}^n\to \mathbb{R}^n$ such that 
	\[
		\varphi_1(z)=\begin{cases}
			z,& z\in \mathbb{R}^n\setminus \mathcal{O},\\
			\Pi_{\Delta,x_{\Delta}}(z), &z\in \Delta\setminus \oball_{\Delta},\
			\Delta\in \mathcal{K}^{(n)}.
		\end{cases}
	\]
	Then $\varphi_1(E_{irr})$ is  purely $d$-unrectifiable. Since
	$\Pi_{\Delta,x_{\Delta}}=\id$ on $\Delta\setminus \mathring{\Delta}$, from
	\eqref{eq:polyapp122} and \eqref{eq:polyapp200}, we get that
	\[
		\Phi_{F_{\ell},\lambda_{\ell}}(\varphi_1(E_{rec}\cap \mathcal{O}))\leq
		\Phi_{F_{\ell},\lambda_{\ell}}(E_{rec}\cap \mathcal{O})+ \eta_1 \alpha_1 
		c_1^{-2d}\HM^d(E_{rec})
		\varepsilon.
	\]

	Similar to the argument above, there is a constant $\alpha_2=\alpha_2(n,d)$
	such that, for any $\Delta\in \mathcal{K}^{(n-1)}$, we can find
	$\oball_{\Delta}=\oball(x_{\Delta},r_{x_{\Delta}})$ such that $\Delta\cap
	\oball_{\Delta}\subseteq \mathring{\Delta}\setminus \overline{E}$,
	$\Pi_{\Delta,x_{\Delta}}(\psi(E_{irr}))$ is purely $d$-unrectifiable and 
	\[
		\HM^d(\Pi_{\Delta,x_{\Delta}}(\psi(E')\cap \Delta))\leq
		\alpha_2c_1^{-2d}\HM^d(\psi(E')\cap \Delta) 
	\]
	for any $\HM^d$-measurable set $E'\subseteq E$. There is also a Lipschitz 
	mapping $\varphi_2:\mathbb{R}^n\to \mathbb{R}^n$ such that 
	\[
		\varphi_2(z)=\begin{cases}
			z,& z\in \mathbb{R}^n\setminus \mathcal{O},\\
			\Pi_{\Delta,x_{\Delta}}(z), &z\in \Delta\setminus \oball_{\Delta},\
			\Delta\in \mathcal{K}^{(n-1)}.
		\end{cases}
	\]
	By a similar procedure as above, we can find constants
	$\alpha_3,\cdots,\alpha_{n-d}$ and Lipschitz mappings $\varphi_3,\cdots,
	\varphi_{n-d}$. Finally, by setting $g=\varphi_{n-d}\circ\cdots\circ
	\varphi_1$, then the following hold:
	\begin{itemize}
		\item $g(E\cap \Delta)\subseteq \cup \mathcal{K}^{(d)}$ for any 
			$\Delta\in \mathcal{S}$,
		\item $\HM^d(g(E'\cap \Delta))\leq \alpha\HM^d(E'\cap
			\Delta)$ for any $E'\subseteq E$ and $\Delta\in \mathcal{S}$,
		\item $g(E_{irr}\cap \Delta)$ is 
			purely $d$-unrectifiable for any $\Delta\in
			\mathcal{K}^{(n-d)}\cup \cdots\cup \mathcal{K}^{(n)}$,
		\item $\Phi_{F_{\ell},\lambda_{\ell}}(g(E_{rec}))\leq
			\Phi_{F_{\ell},\lambda_{\ell}}(E_{rec})+\eta_2 \HM^d(E_{rec})
			\varepsilon$,
	\end{itemize}
	where $\mathcal{S}=\{\Delta\in \mathcal{K}:\dim(\Delta)\geq d+1\}$, $\alpha=\alpha_1\cdots \alpha_{n-d}
	c_1^{-2d(n-d)}$, and $\eta_2=\eta_1 \alpha$.

	For any $\Delta\in \mathcal{K}^{(d)}$, if $\mathring{\Delta}\cap
	g(E)\neq \emptyset$ and $\mathring{\Delta}\setminus \overline{g(E)}
	\neq \emptyset$, then we pick a point $x_{\Delta}\in\mathring{\Delta}
	\setminus\overline{ g(E)}$ and a ball
	$\oball_{\Delta}=\oball(x_{\Delta},r_{x_{\Delta}})$ such that
	$\oball_{\Delta}\cap g(E)=\emptyset$,
	and denote by $\mathcal{S}^{(d)}$ the colloection of all such facest $\Delta\in
	\mathcal{K}^{(d)}$. We let $\varphi_{n-d+1}$ be a Lipschitz 
	mapping such that 
	\[
		\varphi_{n-d+1}(z)=\begin{cases}
			z,&z\in \mathbb{R}^n\setminus \mathcal{O},\\
			\Pi_{\Delta,x_{\Delta}}(z),& z\in \Delta\setminus \oball_{\Delta},\
			\Delta\in \mathcal{S}^{(d)},
		\end{cases}
	\]
	and put $\varphi=\varphi_{n-d+1}\circ g\circ\psi$, $c_2=4^d
	\alpha$.

	By the construction of $\varphi_{i}$, $1\leq i\leq n-d+1$, we may assume
	that $\|\varphi_i-\id\|_{\infty}\leq \mathfrak{r}(\mathcal{K}')$ and
	$\|\varphi_{n-d+1}\circ g-\id\|_{\infty}\leq \mathfrak{r}(\mathcal{K}')$,
	we get that $\|\varphi-\id\|_{\infty}\leq \max\{r_j:1\leq j\leq N\}\leq
	\varepsilon_1$, $\varphi(x)=x$ for $x\in \mathbb{R}^n\setminus \mathcal{O}$,
	$\varphi(\mathcal{O})\subseteq \mathcal{O}$. Thus \ref{polyapp2} holds for
	Lipschitz mapping $\varphi$. Since $g(E\cap \Delta)\subseteq \cup
	\mathcal{K}^{(d)}$ for any $\Delta\in \mathcal{S}$, by the construction 
	$\varphi_{n-d+1}$, we get that \ref{polyapp3} holds. We put $D=\cup 
	\mathcal{K}$. Then $K_0\subseteq D\subseteq \mathcal{O}\subseteq
	K_0+\oball(0,\delta_0)$ and $g(D\cap E_{irr})\subseteq g(D\cap E) 
	\subseteq \cup \mathcal{K}^{(d)}$, thus $g(D\cap E_{irr})$ is 
	$d$-rectifiable. But $g(E_{irr}\cap \Delta)$ is purely $d$-unrectifiable
	for any $\Delta\in \mathcal{K}^{(n-d)}\cup \cdots\cup \mathcal{K}^{(n)}$, we
	get that $\HM^d(g(D\cap E_{irr}))=0$, and
	\[
		\HM^d(g(E_{irr}\cap \mathcal{O}))=\HM^d(g(E_{irr}\cap
		\mathcal{O}\setminus D)) \leq \alpha\HM^d(E\cap
		\mathcal{O}\setminus D)\leq \alpha \varepsilon.
	\]
	Since 
	\[
		\begin{aligned}
			\Phi_{F_{\ell},\lambda_{\ell}}(g(E_{rec}\cap \mathcal{O}))&\leq
			\Phi_{F_{\ell},\lambda_{\ell}}(E_{rec}\cap \mathcal{O})+
			\eta_2\HM^d(E_{rec})\varepsilon\\
			&\leq \Phi_{F_{\ell},\lambda_{\ell}}(X_{rec}\cap \mathcal{O})+(\eta_1+
			4^d\eta_2)\HM^d(X_{rec})\varepsilon,
		\end{aligned}
	\]
	we get that 
	\[
		\begin{aligned}
			\Phi_{F_{\ell},\lambda_{\ell}}(\varphi(X\cap U))&\leq
			\Phi_{F_{\ell},\lambda_{\ell}}(g\circ\psi(X\cap U))=
			\Phi_{F_{\ell},\lambda_{\ell}}(g(E\cap U))\\
			&\leq \Phi_{F_{\ell},\lambda_{\ell}}(X\cap U \setminus \mathcal{O})+
			\Phi_{F_{\ell},\lambda_{\ell}}(g(E\cap \mathcal{O}))\\
			&\leq b \varepsilon + \Phi_{F_{\ell},\lambda_{\ell}}(X_{rec}\cap \mathcal{O})+(\eta_1+
			4^d\eta_2)\HM^d(X_{rec})\varepsilon +b \alpha \varepsilon\\
			&\leq \Phi_{F_{\ell},\lambda_{\ell}}(X_{rec})+
			(b+b \alpha+(\eta_1+4^d\eta_2)\HM^d(X\cap U))\varepsilon.
		\end{aligned}
	\]
	If we take $\varepsilon>0$ such that $(b+b
	\alpha+(\eta_1+4^d\eta_2)\HM^d(X\cap U))\varepsilon \leq \varepsilon_2$,
	then we get that 
	\[
		\Phi_{F_{\ell},\lambda_{\ell}}(\varphi(X\cap U))\leq
		\Phi_{F_{\ell},\lambda_{\ell}}(X_{rec})+\varepsilon_2.
	\]
	The inequality \ref{polyapp5} follows from \eqref{eq:polyapp105}
	and $\HM^d(g(E'\cap \Delta))\leq \alpha\HM^d(E'\cap \Delta)$. 

\end{proof}

\begin{remark}
	If $\ell_0=1$, $F_1\equiv 1$ and $\lambda_1\equiv 1$, then
	$\Phi_{1,1}(E)=\HM^d(E)$ for any $d$-dimensional set $E\subseteq
	\mathbb{R}^n$, and the inequality \ref{polyapp4} can be written as 
	\[
		\HM^d(\varphi(X\cap U))\leq \HM^d(X_{rec})+ \varepsilon_2.
	\]
\end{remark}

\section{Flat $G$-chains}
A subset $X\subseteq \mathbb{R}^n$ is called a neighborhood retract if there is 
an open set $U\supseteq X$ and a continuous mapping $\rho:U\to X$ such that
$\rho\vert_X=\id_X$. If $\rho$ is Lipschitz, then we say that $X$ is a Lipschitz
neighborhood retract. If $U=\mathbb{R}^n$ and $\rho$ is Lipschitz, then $X$ is
called a Lipschitz retract.

Let $G$ be a complete abelian group. That is, an abelian group $G$ equipped 
with a norm $|\cdot|:G\to [0,\infty)$ satisfying that 
\begin{itemize}
	\item $|-g|=|g|$ for any $g\in G$;
	\item $|g+h|\leq |g|+|h|$ for any $g,h\in G$;
	\item $|g|\geq 0$ with equality if and only if $g=0$,
\end{itemize}
and which is a complete metric space with respect to the induced metric.

For any $A\in \FC_{m}(\mathbb{R}^n;G)$, there is a sequence of polyhedral
chains $\{P_k\}\subseteq \PC_{m}(\mathbb{R}^n;G)$ such that $\fn(P_k-A)\to 0$. Since
$\fn(\partial P_k-\partial P_{\ell})\leq \fn(P_k-P_{\ell})$, we see that
$\{\partial P_k\}$ is a Cauchy sequence with respect to $\fn$, so there is a
limit, and the boundary $\partial A$ is defined to be the limit. It is easy
to check that the definition of the boundary does not depend on the choice of
$\{P_k\}$.

For any polyhedral chain $Q=\sum g_i \sigma_i\in \PC_k(\mathbb{R}^n;G)$ and any
polyhedron $\Delta\subseteq \mathbb{R}^n$, we define $Q\mr \Delta=\sum g_i
(\sigma_i \cap \Delta)$. By the Carath\'eodory construction, there is a Radon
measure $\mu_Q$ such that $\mu_Q(\Delta)=\mass(Q \mr \Delta)$. Let $A\in
\FC_k(\mathbb{R}^n;G)$ be any flat chain of finite mass. Then there is a 
sequence of polyhedral chains $\{Q_j\}\subseteq \PC_k(\mathbb{R}^n;G)$ such that
$Q_j\ora{\fn}A$ and $\mass(Q_j)\to \mass(A)$. With $A$ is
associated a Radon measure $\mu_A$, which is defined by the  weak limit of
$\mu_{Q_j}$. Indeed, $\mu_A$ is well defined since that does not depend on the
choice of sequence $\{Q_j\}$, see Section 4 in \cite{Fleming:1966}.
There is also a set function $A\mr E$ with value in $\FC_k(\mathbb{R}^n;G)$, 
which extend $Q\mr \Delta$ to any flat chain $A$ of finite mass and with any 
Borel set $E$, such that $\mu_A(E)= \mass(A\mr E)$. We define the Hausdorff 
size of $A$ to be the value 
\[
	\inf\{\HM^k(E): A\mr E =A, E\text{ is Borel}\}.
\]
Proposition 8.2 in \cite{White:1999:Ann} indicates that flat size coincide
with Hausdorff size in case of finite mass. 

\subsection{Size optimal deformation theorem}
\begin{theorem}\label{thm:pca}
	Let $G$ be any complete normed abelian group. Let $U\subseteq \mathbb{R}^n$ 
	be an open set, $K\subseteq U$ be any compact set. Let $Z\subseteq \mathbb{R}^n$
	be a set such that $0<\HM^d(Z \cap U)<+\infty$ and $\HM^{d+1}(\overline{Z}
	\cap U)=0$. Then there exist constants $c_1=c_1(n,d)\in (0,1)$ and 
	$c_0=c_0(n,d)\geq 1$ such that for any 
	$0< \varepsilon<1/10$ and flat chain $S\in \FC_d(\mathbb{R}^n;G)$ with 
	$0<\mass(S)<+\infty$ and $S\mr Z=S$, we can find a polyhedral complex 
	$\mathcal{K}$ and a Lipschitz mapping $h:I\times \mathbb{R}^n\to 
	\mathbb{R}^n$ satisfying that 
	\begin{enumerate}[label*=(\arabic*),ref=\emph{(\arabic*)}]
		\item \label{pca1} $\mathfrak{r}(\mathcal{K})\leq \varepsilon$,
			$\mathcal{R}(\mathcal{K})\geq c_1$, $|\mathcal{K}|\supseteq K$,
		\item \label{pca2} $h(0,\cdot)=\id$, $\|h(t,\cdot)-\id\|_{\infty}\leq
			\varepsilon$, $h(t,x)=x$ for $x\in \mathbb{R}^n\setminus U$ and $t\in I$,
		\item \label{pca3} $\varphi_{\sharp}S-S=\partial h_{\sharp}(I
			\times S)+h_{\sharp}(I\times\partial S)$, 
		\item \label{pca4} $(\varphi_{\sharp}S) \mr |\mathcal{K}|\in 
			\PC_d(\mathbb{R}^n; G)$,
		\item \label{pca5} $\mass(\varphi_{\sharp}S)\leq c_0 \mass(S)$, 
			$\mass( h_{\sharp}(I\times S)) \leq \varepsilon c_0 \mass(S)$,
		\item \label{pca6} $\mass(h_{\sharp}(I\times\partial S))\leq 
			\varepsilon c_0\mass(\partial S)$ in case $\mass(\partial S)<+\infty$,
		\item \label{pca7} $\size(\varphi_{\sharp}S)\leq \size(S)+\varepsilon$,
		\item \label{pca8} $\size((\varphi_{\sharp}S)\mr \Delta)\leq c_0 
			\size(S\mr (\Delta+\oball(0,\varepsilon)))$ for any $\Delta\in
			\mathcal{K}$,
		\item \label{pca9} $\size((\varphi_{\sharp}S)\mr (U\setminus
			|\mathcal{K}|))\leq c_0 \size(S\mr (U\setminus K))$ in case $S\in
			\RC_d(\mathbb{R}^n;G)$,
	\end{enumerate}
	where $\varphi=h(n-d+2,\cdot)$, $I=[0,n-d+2]$.
\end{theorem}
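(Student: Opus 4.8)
The plan is to combine the polyhedral approximation of Theorem \ref{thm:polyapp}, applied to the carrier $Z$ of $S$, with the classical Federer--Fleming deformation of flat chains, but performed along the \emph{same} Lipschitz maps $\varphi_i$ produced there, so that the geometric estimates on $\HM^d$ transfer verbatim to $\size$. First I would fix the integrand data: take $\ell_0=2$, $F_1\equiv 1$, $\lambda_1\equiv 1$ (so that $\Phi_{F_1,\lambda_1}=\HM^d$), and let $F_2,\lambda_2$ record the flat-size density if needed; then apply Theorem \ref{thm:polyapp} with this $Z$ in place of $X$, with $K$ and $U$ as given, and with $\varepsilon_1=\varepsilon$ and $\varepsilon_2=\varepsilon$. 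This yields a complex $\mathcal{K}$ satisfying \ref{pca1} (with $c_1$ the constant $c_1(n)$ from there, relabelled) and a Lipschitz map $\varphi=\varphi_{n-d+1}\circ\psi_1\circ\psi$ pushing $Z$ into the $d$-skeleton $|\mathcal{K}_d|$, with $\|\varphi-\id\|_\infty\le\varepsilon$ and $\varphi=\id$ off $U$.

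Next I would construct the homotopy $h$. Each of the building maps $\psi_j$, $\varphi_1,\dots,\varphi_{n-d+1}$ is either a radial retraction $\Pi_{\Delta,x_\Delta}$ on a polyhedron minus a ball, or the cylinder-flattening map of Lemma \ref{le:mp}; both are straight-line homotopic to the identity \emph{within} the ball/polyhedron where they act nontrivially. So I set $h$ to be the concatenation, over the $n-d+2$ unit time-intervals of $I=[0,n-d+2]$, of the affine homotopies $h_i(t,x)=(1-t)x+t\,\varphi_i(x)$ (composed appropriately with the earlier maps), reparametrised; this is Lipschitz in $(t,x)$ because each $\varphi_i$ is Lipschitz and the supports are bounded, and it gives \ref{pca2} since $\|h(t,\cdot)-\id\|_\infty\le\max_i\|\varphi_i-\id\|_\infty\le\varepsilon$ and $h(t,x)=x$ off $U$. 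Property \ref{pca3} is then just the homotopy formula $\varphi_\sharp S-S=\partial h_\sharp(I\times S)+h_\sharp(I\times\partial S)$ for flat chains under Lipschitz homotopies (Section 5 of \cite{Fleming:1966}), applied to each stage and telescoped. Property \ref{pca4} holds because $\spt(\varphi_\sharp S)\subseteq\varphi(Z)\cap|\mathcal{K}|\subseteq|\mathcal{K}_d|$, and a flat $d$-chain with finite mass carried by a $d$-dimensional polyhedral set whose restriction to each top simplex is affine-image of a polyhedral chain is polyhedral (here one uses $S\mr Z=S$ and the constancy theorem on each $d$-face).

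For the mass and size estimates, \ref{pca5}, \ref{pca6} follow from the standard deformation bounds: each radial projection on a $k$-polyhedron with rotundity bounded below by $c_1$ multiplies mass by at most a dimensional constant (the factor $\alpha_i c_1^{-2d}$ already appearing in the proof of Theorem \ref{thm:polyapp}), and $h_\sharp(I\times S)$ lives in a thin neighbourhood of width $\le\varepsilon$, giving the extra $\varepsilon$; multiplying the finitely many stage-constants yields $c_0=c_0(n,d)$. The size statements \ref{pca7}, \ref{pca8}, \ref{pca9} are the crux: for \ref{pca7} I would run the size bookkeeping exactly as in \ref{polyapp4}, tracking a Borel carrier $E$ of $S$ with $\HM^d(E)$ close to $\size(S)$ and checking that $\varphi(E)$ is a carrier of $\varphi_\sharp S$ with $\HM^d(\varphi(E))\le\HM^d(E_{rec})+\varepsilon\le\size(S)+\varepsilon$ — here the purely unrectifiable part is killed by Lemmas \ref{le:projirr}, \ref{le:projirrp} as in Theorem \ref{thm:polyapp}, and the rectifiable part is controlled by Lemma \ref{le:intden}; for \ref{pca8} one localises: $(\varphi_\sharp S)\mr\Delta$ has a carrier contained in $\varphi(E\cap(\Delta+\oball(0,\varepsilon)))$ because $\|\varphi-\id\|_\infty\le\varepsilon$, and \ref{polyapp5} gives the factor $c_0$; \ref{pca9} is the same localisation with $\Delta$ replaced by the complement of $K$, using that $\varphi=\id$ there when $S$ is rectifiable so no new carrier mass is created outside. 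The main obstacle I anticipate is \ref{pca4} together with the \emph{simultaneous} validity of \ref{pca3}: one must arrange the homotopy so that no intermediate stage destroys the support condition needed to conclude polyhedrality, and so that the flat-chain homotopy formula composes cleanly across the $n-d+2$ stages — in particular one must verify that $h_\sharp(I\times\partial S)$ and $\partial h_\sharp(I\times S)$ remain well-defined flat chains of finite mass at each stage, which forces the careful choice of the time reparametrisation and of the bounded supports above.
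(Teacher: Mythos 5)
Your overall route is the same as the paper's: reuse the maps $\psi,\varphi_1,\dots,\varphi_{n-d+1}$ from Theorem \ref{thm:polyapp} applied to a Borel carrier of $S$ inside $Z$, concatenate the straight-line homotopies over $I=[0,n-d+2]$, get \ref{pca3} from Fleming's homotopy formula, \ref{pca4} from the support lying in the $d$-skeleton, and \ref{pca7}--\ref{pca9} from the carrier bookkeeping. But there is a genuine gap at \ref{pca5} and \ref{pca6}. You claim the mass bounds ``follow from the standard deformation bounds,'' citing the factor $\alpha_i c_1^{-2d}$ from the proof of Theorem \ref{thm:polyapp}. That factor controls only the Hausdorff measure of the image of the \emph{carrier set} under $\Pi_{\Delta,x_\Delta}$, because the centers $x_\Delta$ there were chosen by a Chebyshev argument with respect to $\HM^d$ restricted to the carrier. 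It does not control $\mass\big((\Pi_{\Delta,x_\Delta})_\sharp (S_{i-1}\mr\Delta)\big)$: the radial projection $\Pi_{\Delta,x}$ is not uniformly Lipschitz (its differential blows up like $1/|z-x|$ near the center), and the mass measure $\mu_S$ of a flat $G$-chain need not be comparable to $\HM^d$ on the carrier --- the group coefficients can have large norm concentrated arbitrarily close to a center that is perfectly good for the set-level estimate. With such a center the pushforward mass, and hence also the homotopy terms in \ref{pca5}--\ref{pca6}, can be arbitrarily large.

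This is exactly the point the paper flags (``we should carefully select the points $x_\Delta\in Y_\Delta$'') and fixes by a \emph{second} averaging argument: for each $\Delta$ of dimension $\ell\ge d+1$ it estimates $\int_{\cball_\Delta}\int_\Delta\|D\Pi_{\Delta,x}(z)\|^k\,d\mu_S(z)\,d\HM^\ell(x)$ and, by Chebyshev with respect to $\mu_S$, produces a set $Y_\Delta'$ of centers satisfying \eqref{eq:pca190}; since $Y_\Delta$ (from Lemma \ref{le:projrec} with $\beta=1/4$) and $Y_\Delta'$ each fill more than half of $\cball_\Delta$, one can pick $x_\Delta\in Y_\Delta\cap Y_\Delta'$ as in \eqref{eq:pca200}, still compatible with the pure-unrectifiability requirement of Lemma \ref{le:projirr}. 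Only with this simultaneous choice do the iterated bounds $\mass(S_i)\le 4C c_1^{-2(n-i)}\mass(S_{i-1})$ hold, giving $c_0=c_0(n,d)$ in \ref{pca5}, and then (6.5) of \cite{Fleming:1966} together with $\|\varphi_i-\id\|_\infty\le\varepsilon$ yields the $\varepsilon$-factors in \ref{pca5}--\ref{pca6}. Your proposal, as written, never averages against $\mu_S$, so this step is missing; the rest of your outline (including taking $F_1\equiv1$, $\lambda_1\equiv1$ --- the auxiliary $F_2,\lambda_2$ you mention play no role) matches the paper's argument.
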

\begin{proof}
	Put $\mu=\mu_{S}$. Let $X\subseteq Z$ be a Boral set such
	that  $S\mr X =S$ and $\HM^d(X)=\size(S)$. Write $X\cap U=X_{rec}\cup X_{irr}$,
	where $X_{rec}$ is $d$-rectifiable and $X_{irr}$ is purely $d$-unrectifiable.
	Let us apply Theorem \ref{thm:polyapp} with $\ell_0=1$, $F_1\equiv 1$,
	$\lambda_1\equiv 1$ and $\varepsilon_1=\varepsilon_2=\varepsilon/c_0$, and 
	construct the Lipschitz mapping $h$.
	Let the complexes $\mathcal{K}'$ and $\mathcal{K}$, the constant $c_1$, and
	the Lipschitz mapping $\varphi$ be the same as in Theorem \ref{thm:polyapp}.
	But we should carefully select the points $x_{\Delta}\in Y_{\Delta}$ to make
	that hold with \ref{pca5} and \ref{pca6}. Actually, that
	will be done in \eqref{eq:pca200}. 

	Let $\psi, \varphi_i$, $1\leq i\leq n-d+1$, $\mathcal{O}=|\mathcal{K}|$,
	$\mathcal{K}^{(\ell)}=\{\Delta\in \mathcal{K}':
	\dim(\Delta)=\ell,\mathring{\Delta}\cap \partial \mathcal{O}=\emptyset\}$,
	$0\leq \ell\leq n$, be the same as in the proof of Theorem
	\ref{thm:polyapp}.	Define $g:[0,1]\times \mathbb{R}^n\to \mathbb{R}^n$
	by $g(t,\cdot)=(1-t)\id + t\psi$. By (6.3), (6.4), (6.5) and (6.6) in 
	\cite{Fleming:1966}, we get that 
	\[
		\psi_{\sharp}S-S=\partial g_{\sharp}([0,1]\times S)+
		g_{\sharp}([0,1]\times \partial S),
	\]
	\[
		\mass(\psi_{\sharp}S)\leq \Lip(\psi)^d\mass(S)\leq 4^d\mass(S),
	\]
	\[
		\size(\psi_{\sharp}S)\leq \HM^d(\psi_{\sharp}(X))\leq
		\HM^d(X)+\varepsilon=\size(S)+ \varepsilon
	\]
	and 
	\[
		\mass(g_{\sharp}([0,1]\times S))\leq 5^{d+1}\mass(S),
		\mass(g_{\sharp}([0,1]\times \partial S))\leq 4^d \mass(\partial S).
	\]
	Put $S_0=\psi_{\sharp} S$, $\mu_0=\mu_{S_0}$, $S_i=(\varphi_i)_{\sharp}
	S_{i-1}$ and $\mu_i=\mu_{S_i}$, $1\leq i\leq n-d+1$.

	For any compact convex polyhedron $\Delta\subseteq \mathbb{R}^{n}$ with
	$\dim(\Delta)=\ell\geq d+1$, let $\cball_{\Delta}$ and
	$Y_{\Delta}$ be the same as in Lemma \ref{le:projrec}. Assume that
	$\cball_{\Delta}=\cball(x_1,r_0)\subseteq \cball(x_1,2 r_0)\subseteq\Delta
	\subseteq \cball(x_2,R_0)$ and $2r_0/R_0\geq \mathcal{R}(\Delta)/2$.
	Then for any $x\in \mathring{\Delta}$, 
	$\Pi_{\Delta,x}=(p_x\vert_{\partial \Delta})^{-1}\circ p_x$,
	\[
		\Lip(p_x\vert_{\partial \Delta})^{-1}\leq \frac{8R_0^2}{r_0},\
		\|Dp_x(z) \|=\frac{1}{|z-x|}.
	\]
	Thus 
	\[
		\mass \left( (\Pi_{\Delta,x})_{\sharp}(S_{n-\ell}\mr \mathring{\Delta})
		\right)\leq \left(\frac{8R_0}{r_0}\right)^{d}\int_{z\in \mathring{\Delta}}
		\|Dp_x(z)\|^d d\mu_{n-\ell}(z)=\left(\frac{8R_0}{r_0}\right)^{d}
		\int_{z\in \mathring{\Delta}} \frac{d \mu_{n-\ell}(z)}{|z-x|^d}
	\]
	and 
	\[
		\begin{aligned}
			\int_{x\in \cball_{\Delta}}\mass \left( (\Pi_{\Delta,x})_{\sharp}
			(S_{n-\ell}\mr \mathring{\Delta})\right) d\HM^{\ell}(x)&
			\leq \left(\frac{8R_0^2}{r_0}\right)^d \int_{x\in \cball_{\Delta}}
			\int_{z\in \mathring{\Delta}}\frac{d \mu_{n-\ell}(z)}{|z-x|^d}d\HM^{\ell}(x)\\
			&\leq \left(\frac{8R_0^2}{r_0}\right)^dC(\ell)r_0^{\ell-d}
			\mu_{n-\ell}(\mathring{\Delta})\leq C
			\mathcal{R}(\Delta)^{-2d}r_0^{\ell}\mu_{n-\ell}(\mathring{\Delta}),
		\end{aligned}
	\]
	where $C=2^{n}\max\{C(\ell):d+1\leq \ell\leq n\}$. Similar to Lemma
	\ref{le:projrec}, by Chebyshev's inequality, we get that 
	\[
		\HM^{\ell}\left(\left\{x\in \cball_{\Delta}:\mass \left( (\Pi_{\Delta,x})_{\sharp}
			(S_{n-\ell}\mr \mathring{\Delta})\right)\leq C \beta^{-1} \mathcal{R}
			(\Delta)^{-2d}\mu_{n-\ell}(\mathring{\Delta})\right\}
		\right)\geq 
		(1-\beta) \omega_{\ell} r_0^{\ell}.
	\]
	Denote by $Y_{\Delta}'$ the set of points $x\in \cball_{\Delta}$ such that 
	\begin{equation}\label{eq:pca190}
		\mass \left( (\Pi_{\Delta,x})_{\sharp}
			(S_{n-\ell}\mr \mathring{\Delta})\right)\leq 4C \mathcal{R}
			(\Delta)^{-2d}\mu_{n-\ell}(\mathring{\Delta}).
	\end{equation}
	Then we have that 
	\[
		\HM^d(Y_{\Delta}\cap Y_{\Delta}')\geq \omega_{\ell} r_0^{\ell}/2>0.
	\]
	Together with Lemma \ref{le:projirr}, for any $\Delta\in \cup_{d+1\leq
	\ell\leq n} \mathcal{K}^{\ell}$, we can choose point 
	\begin{equation}\label{eq:pca200}
		x_{\Delta}\in Y_{\Delta}\cap Y_{\Delta}' 
	\end{equation}
	such that $\Pi_{\Delta,x_{\Delta}}(X_{irr}\cap \Delta)$ is purely $d$-unrectifiable.
	Thus iequalities \eqref{eq:pca190} and \eqref{eq:projrec1} hold with 
	$x=x_{\Delta}$ and $\beta=1/4$. Similar as in the proof of Theorem
	\ref{thm:polyapp}, write $E\cap U=E_{rec}\cup E_{irr}$, where $E=\psi(X)$.
	 Since $S\mr X=S$, we have that $S_0\mr E=S_0$. Then, by
	\eqref{eq:pca190}, we get that 
	\[
		\mass(S_i)\leq 4C c_1^{-2(n-i)}\mass(S_{i-1}),\ 1\leq i\leq n-d,
	\]
	and $\mass(S_{n-d+1})\leq \mass(S_{n-d})$. Thus $\mass(\varphi_{\sharp} S)
	\leq c_3 \mass(S)$ for some constant $c_3=c_3(n,d)>0$.

	Put $\varphi_0=\psi$, and define the mapping $h:[0,n-d+2]\times \mathbb{R}^n
	\to \mathbb{R}^n$ as follows: 
	\[
		h(0,\cdot)=\id,\ h(i,\cdot)=\varphi_{i-1}\circ\cdots\circ \varphi_0,\
		1\leq i\leq n-d+2;
	\]
	\[
		h(t,\cdot)=(i+1-t)h(i,\cdot)+(t-i)h(i+1,t),\ i<t<i+1,\ 0\leq i\leq n-d+1. 
	\]
	By (6.3) in \cite{Fleming:1966}, we get that \ref{pca3} holds. Indeed, 
	\ref{pca1}, \ref{pca2}, \ref{pca4}, \ref{pca7}, \ref{pca8} and \ref{pca9} 
	directly follow from Theorem \ref{thm:polyapp}, we are going to prove 
	\ref{pca5} and \ref{pca6}.

	For any $1\leq i\leq n-d+2$, define $h_i:[0,1]\times \mathbb{R}^n\to 
	\mathbb{R}^n$ by $h_i(t,\cdot)=(1-t)\id+t \varphi_i$. Put $h_0=g$.
	Then
	\[
		h(t,x)=h_i(t-i,\varphi_{i-1}\circ\cdots\circ \varphi_0(x)),\ (t,x)\in
		[i,i+1]\times \mathbb{R}^n,\ 0\leq i\leq n-d+1,
	\]
	thus
	\[
		\begin{gathered}
			h_{\sharp}(I_i\times S)=(h_i)_{\sharp}\circ (\trans{i},\varphi_{i-1}
			\circ\cdots\circ \varphi_0)_{\sharp}([0,1]\times S),\\
			h_{\sharp}(I_i\times \partial S)=(h_i)_{\sharp}\circ (\trans{i},
			\varphi_{i-1} \circ\cdots\circ \varphi_0)_{\sharp}([0,1]\times
			\partial S),
		\end{gathered}
	\]
	where $I_i=[i,i+1]$, $\trans{i}(t)=t-i$. Hence, by (6.5) in
	\cite{Fleming:1966} and \eqref{eq:pca190}, we have that
	\[
		\mass(h_{\sharp}(I_i\times S))\leq \varepsilon \mass((\varphi_{i-1}
		\circ\cdots\circ \varphi_0)_{\sharp}S)\leq \varepsilon c_3 \mass(S),
	\]
	and 
	\[
		\mass(h_{\sharp}(I\times S))\leq \varepsilon c_0 \mass(S)
	\]
	for some constant $c_0=c_0(n,d)>0$. If $\mass(\partial S)<+\infty$, similarly we have
	that 
	\[
		\mass(h_{\sharp}(I\times \partial S))\leq \varepsilon c_0
		\mass(\partial S).
	\]
	Thus \ref{pca5} and \ref{pca6} hold.

\end{proof}
\subsection{Equal infimum}
In this subsection, we always assume that $G$ is a discrete normed abelian
group. For any compact sets $E,F\subseteq \mathbb{R}^n$, we define their
Hausdorff distance $\HD(E,F)$ by
\[
	\HD(E,F)=\inf\{r\geq 0: E\subseteq F+\cball(0,r), F\subseteq E+\cball(0,r)\}
	.
\]
\begin{lemma}\label{le:MS}
	Assuming $a=\inf\{ \left\vert
	g\right\vert:g\in G\setminus\{ 0 \}\}$, we get that for any $T\in\mathscr{F}
	_{d}(\mathbb{R}^{n};G)$,
	\[
		\mass(T)\geq a\size(T).
	\]
\end{lemma}
\begin{proof}
	For any polyhedral $d$-chain $P\in\PC_{d}(\mathbb{R}^{n};G)$, we
	can write $P=\sum_{i}g_{i}\sigma_{i}$ such that $\sigma_{i}$ are interior
	disjoint, then 
	\[
		\mass(P)=\sum_{i}\left\vert
		g_{i}\right\vert\HM^{d}(\sigma_{i})\geq a\sum_{i}\HM^{d}(\sigma_{i})=a\size(P),
	\]
	hence 
	\[
		\mass(T)=\inf\left\{ \liminf_{j\to \infty}\mass(P_{j}): P_{j}\ora{\fn} T \right\}
		\geq\inf\left\{ \liminf_{j\to \infty}a\cdot\size(P_{j}): P_{j}\ora{\fn} T \right\}
		=a\size(T).
	\]

\end{proof}
\begin{lemma} \label{le:sur}
	Let $S\in \FC_k(\mathbb{R}^n;G)$
	be any flat chain with compact support. Then for any $\varepsilon_2>
	\varepsilon_1>0$ and any sequence of polyhedral chains $P_m\in
	\PC_k(\mathbb{R}^n;G)$ with
	$P_m\ora{\fn}S$, we can find compact set $V$, which is a polyhedron, such
	that  
	\[
		\spt S+\oball(0,\varepsilon_1) \subseteq V \subseteq \spt
		S+\oball(0,\varepsilon_2) \text{ and } P_m\mr V\ora{\fn} S.
	\]
\end{lemma}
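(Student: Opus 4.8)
The plan is to choose $V$ as a suitable polyhedral neighborhood of $\spt S$ and then use the fact that a flat chain supported in $V$ is unchanged under restriction to $V$, while the restriction operator $\cdot \mr V$ is continuous with respect to $\fn$ on chains that do not charge $\partial V$. First I would fix $\varepsilon_1 < \varepsilon' < \varepsilon_2$ and observe that, since $\spt S$ is compact, the sets $\{x : \dist(x,\spt S) \le t\}$ for $\varepsilon_1 \le t \le \varepsilon'$ form an uncountable nested family whose boundaries $\{x : \dist(x,\spt S) = t\}$ are pairwise disjoint. Taking any such value $t$ gives a compact set $K_t$ with $\spt S + \oball(0,\varepsilon_1) \subseteq K_t \subseteq \spt S + \oball(0,\varepsilon_2)$; I would then replace $K_t$ by a polyhedral set $V$ sandwiched between $K_t$ and a slightly larger sublevel set (e.g.\ approximate $K_t$ by a finite union of dyadic cubes), still satisfying the required inclusions and with $\partial V$ of measure zero for the relevant measures.

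The key point is continuity of restriction. Since $\fn(P_m - S) \to 0$, to conclude $P_m \mr V \ora{\fn} S$ it suffices to show $\fn(P_m \mr V - S \mr V) \to 0$, because $S \mr V = S$ as $\spt S \subseteq \intr V$. Write $P_m - S = Q_m + \partial R_m$ with $\mass(Q_m) + \mass(R_m) \to 0$ by definition of $\fn$. Restricting a flat chain to a fixed polyhedron $V$ does not increase mass, and the slicing/restriction identity gives
\[
	(P_m - S)\mr V = Q_m \mr V + (\partial R_m)\mr V
	= Q_m\mr V + \partial(R_m \mr V) + \langle R_m, u, t\rangle,
\]
where $u = \dist(\cdot,\spt S)$, $t$ is the chosen level, and $\langle R_m, u, t\rangle$ is the slice of $R_m$ by $u$ at level $t$. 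The first two terms have $\fn$-norm bounded by $\mass(Q_m) + \mass(R_m) \to 0$, so the main obstacle is controlling the slice term.

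The hard part will be choosing the level $t$ so that the slices $\langle R_m, u, t\rangle$ are small simultaneously for all $m$. Here I would use the standard coarea/slicing estimate $\int \mass(\langle R_m, u, s\rangle)\, ds \le \Lip(u)\,\mass(R_m)$, valid since $u$ is $1$-Lipschitz. Because $\sum_m \mass(R_m) < \infty$ can be arranged by passing to the chains witnessing $\fn(P_m - S) \le 2^{-m}$ after extracting a subsequence — or more simply, since for each fixed $m$ the function $s \mapsto \mass(\langle R_m, u, s\rangle)$ is integrable on $[\varepsilon_1,\varepsilon']$ — Fatou's lemma (applied to $\sum_m$, after arranging summable masses) or a diagonal argument produces a single level $t \in (\varepsilon_1,\varepsilon')$ at which $\mass(\langle R_m, u, t\rangle) \to 0$. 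With $V$ built from this $t$, all three terms above tend to zero in $\fn$, giving $P_m\mr V \ora{\fn} S$. Finally, one checks the polyhedral approximation of the sublevel set can be taken with $\partial V$ avoiding this null set of levels, so the slicing identity applies on the nose; this is a routine verification using that $\PC$-chains and flat chains of finite mass slice nicely by Lipschitz functions (Fleming, Section 5).
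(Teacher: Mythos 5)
Your strategy is the right one, and it is essentially the mechanism hiding inside the paper's proof: the paper first uses the definition of support to pick polyhedral chains $Q_m\ora{\fn}S$ with $\spt Q_m\subseteq \spt S+\oball(0,\varepsilon_1)$, passes to a subsequence with $\sum_m\fn(P_m-Q_m)<\infty$, and then invokes Lemma 2.1 of \cite{Fleming:1966} (itself a coarea/slicing selection of a good polyhedral ``level'') to get $V$ with $\sum_m\fn\big((P_m-Q_m)\mr(\mathbb{R}^n\setminus V)\big)<\infty$; since $Q_m\mr(\mathbb{R}^n\setminus V)=0$, this yields $P_m\mr V\ora{\fn}S$. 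In particular your extraction of a subsequence with summable masses is not a defect relative to the paper, which does exactly the same.

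There are, however, two concrete gaps in your version, both at the step where $V$ is made polyhedral. First, your identity $(P_m-S)\mr V=Q_m\mr V+\partial(R_m\mr V)+\langle R_m,u,t\rangle$ is an identity for $V=\{u\le t\}$ (for a.e.\ $t$), and that sublevel set of the Euclidean distance function is not a polyhedron; once you replace it by a finite union of dyadic cubes, the error term $\partial(R_m\mr V)-(\partial R_m)\mr V$ is a slice of $R_m$ along $\partial V$, not $\langle R_m,u,t\rangle$, and your coarea choice of the level $t$ gives no control over it. Requiring $\partial V$ to ``avoid the null set of levels'' does not help: the problem is not which levels $\partial V$ meets but that $\partial V$ is not a level set of $u$ at all (note also that even $\mu_{R_m}(\partial V)=0$ would not bound the mass of the slice along $\partial V$). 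The standard repair, and in effect what Fleming's Lemma 2.1 provides, is to slice with a function whose sublevel sets are already polyhedra, e.g.\ $f(x)=\inf\{\|x-y\|_{\infty}:y\in\Delta\}$ for a polyhedral neighborhood $\Delta$ of $\spt S$ -- the same device the paper uses later in Lemma \ref{le:fnsm} -- and run your coarea argument in that parameter. Second, ``$S\mr V=S$'' needs justification: $S$ is a general flat chain, possibly of infinite mass, so $S\mr V$ is not defined outright; it is only defined through slicing at a.e.\ level, where independence of the chosen decomposition $S=Q+\partial R$ must be invoked, and with an arbitrary decomposition there is no reason the resulting restriction to $\mathbb{R}^n\setminus V$ vanishes merely because $\spt S\subseteq\intr V$. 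The paper sidesteps this entirely by never restricting $S$: it only needs $Q_m\mr(\mathbb{R}^n\setminus V)=0$ for polyhedral approximants supported in $\spt S+\oball(0,\varepsilon_1)$, and you could adopt the same device. Both gaps are reparable, but as written the argument does not close.
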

\begin{proof}
	By the definition of support, we can find polyhedral chains $\{Q_m\}$ such that
	\[
		Q_m\ora{\fn} S \text{ and }\spt Q_m\subseteq \spt S +\oball(0,\varepsilon_1). 
	\]
	Pass to a subsequence, we may assume that 
	\[
		\sum \fn(P_m-Q_m)<\infty.
	\]
	By Lemma 2.1 in \cite{Fleming:1966}, there exists a compact set $V$, which 
	is a polyhedron, such that 
	\[
		\spt S+\oball(0,\varepsilon_1)\subseteq
		V\subseteq \spt S +\oball(0,\varepsilon_2) \text{ and }
		\sum\fn \big( (P_m-Q_m)\mr (\mathbb{R}^n\setminus V)\big)<\infty.
	\]
	Thus 
	\[
		P_m\mr (\mathbb{R}^n\setminus V)-Q_m\mr (\mathbb{R}^n\setminus V)
		\ora{\fn} 0.
	\]
	However $\spt Q_m\subseteq \intr(V)$, we have that 
	\[
		Q_m\mr (\mathbb{R}^n\setminus V)=0 \text{ and }
		P_m\mr (\mathbb{R}^n\setminus V)\ora{\fn} 0.
	\]
	Thus 
	\[
		P_m\mr V\ora{\fn}S.
	\]

\end{proof}
\begin{lemma}\label{le:aps}
	For any $\varepsilon>0$ and 
	flat chain $R\in \FC_{k}(\mathbb{R}^n;G)$ with compact support, we can find
	a sequence of polyhedral chains $\{P_m\}\subseteq \PC_{k} (\mathbb{R}^n;G)$ such that
	\begin{equation}\label{eq:aps1}
		P_m\ora{\fn} R,\ \spt(P_m)\subseteq \spt(R)+\oball(0,\varepsilon)
		\text{ and } \spt(\partial P_m)\subseteq \spt(\partial
		R)+\oball(0,\varepsilon).
	\end{equation}
\end{lemma}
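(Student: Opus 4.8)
The plan is to first produce a polyhedron approximating sequence that is already supported in $\spt(R)+\oball(0,\varepsilon)$, and then to repair its boundaries. Write $K=\spt(R)$ and $B=\spt(\partial R)\subseteq K$. Choose polyhedron chains $Q_m^0\ora{\fn}R$; since $\fn(\partial T)\le\fn(T)$ we also have $\partial Q_m^0\ora{\fn}\partial R$. First I would apply Lemma~\ref{le:sur} to $R$ with the sequence $\{Q_m^0\}$ and radii $\tfrac12\varepsilon,\tfrac23\varepsilon$, obtaining a polyhedron $V$ with $K+\oball(0,\tfrac12\varepsilon)\subseteq V\subseteq K+\oball(0,\tfrac23\varepsilon)$ and $Q_m:=Q_m^0\mr V\ora{\fn}R$, so that $\spt(Q_m)\subseteq V$; and apply Lemma~\ref{le:sur} to $\partial R$ with the polyhedron sequence $\{\partial Q_m\}$ and radii $\tfrac14\varepsilon,\tfrac13\varepsilon$, obtaining a polyhedron $W$ with $B+\oball(0,\tfrac14\varepsilon)\subseteq W\subseteq B+\oball(0,\tfrac13\varepsilon)$ and $(\partial Q_m)\mr W\ora{\fn}\partial R$. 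The ``escaping boundary'' $N_m:=\partial Q_m-(\partial Q_m)\mr W$ then converges to $0$, since $N_m\ora{\fn}\partial R-\partial R=0$; it is a polyhedron chain with $\spt(N_m)\subseteq V$, and its own boundary $\partial N_m=-\partial\big((\partial Q_m)\mr W\big)$ is a polyhedron cycle supported in $\partial W\subseteq B+\cball(0,\tfrac13\varepsilon)$ with $\fn(\partial N_m)\le\fn(N_m)\to0$.

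The core of the argument is a support-controlled filling estimate: for a compact set $Y\subseteq\mathbb{R}^n$ and $0<\rho<\rho'$ there is a constant $C=C(n,\rho'-\rho)$ so that, once $\fn(\Gamma)$ is small enough, every polyhedron cycle $\Gamma$ with $\spt(\Gamma)\subseteq Y+\oball(0,\rho)$ can be written $\Gamma=\partial\Theta$ with $\Theta$ a polyhedron chain, $\spt(\Theta)\subseteq Y+\oball(0,\rho')$ and $\mass(\Theta)\le C\fn(\Gamma)$. I would prove it by taking a near-optimal decomposition $\Gamma=e+\partial f$ with $e,f$ polyhedron chains and $\mass(e)+\mass(f)<\fn(\Gamma)+\delta$, and, by a slicing argument (cf.\ Lemma~2.1 of \cite{Fleming:1966}; take a suitable sublevel set of a piecewise-linear approximation of $\dist(\cdot,Y)$), choosing a polyhedron $V'$ with $Y+\oball(0,\rho)\subseteq V'\subseteq Y+\oball(0,\tfrac12(\rho+\rho'))$ for which the slice term $S_f$ in $\partial(f\mr V')=(\partial f)\mr V'+S_f$ (with $\spt(S_f)\subseteq\partial V'$) obeys $\mass(S_f)\le C_0(\rho'-\rho)^{-1}\mass(f)$. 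Since $\spt(\Gamma)\subseteq\intr V'$,
\[
  \Gamma=\Gamma\mr V'=\big(e\mr V'-S_f\big)+\partial\big(f\mr V'\big),
\]
and $e\mr V'-S_f$ is a polyhedron cycle of small mass supported in $V'$; by the isoperimetric inequality for flat $G$-chains (deform it onto a grid fine enough that it becomes $0$ and use the homotopy chain as filling) it bounds a polyhedron chain of still smaller mass supported in a small neighbourhood of $V'$, which for $\fn(\Gamma)$ small enough sits inside $Y+\oball(0,\rho')$; adding the two fillings produces $\Theta$. When $\Gamma$ is $(k-2)$-dimensional the case $k=1$ is vacuous, and when $\Gamma$ is a $0$-cycle one uses only that over a discrete group a small-mass $0$-cycle is $0$ (Lemma~\ref{le:MS}), so no isoperimetric step is needed.

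Granting this estimate, I would apply it with $Y=B$ to $\partial N_m$, getting polyhedron chains $M_m$ with $\partial M_m=\partial N_m$, $\spt(M_m)\subseteq B+\oball(0,\tfrac12\varepsilon)$ and $\fn(M_m)\to0$; then $N_m-M_m$ is a polyhedron cycle with $\fn(N_m-M_m)\to0$ and $\spt(N_m-M_m)\subseteq V\cup\spt(M_m)\subseteq K+\oball(0,\tfrac23\varepsilon)$, and applying the estimate with $Y=K$ gives polyhedron chains $b_m$ with $\partial b_m=N_m-M_m$, $\spt(b_m)\subseteq K+\oball(0,\varepsilon)$ and $\fn(b_m)\to0$. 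Then $P_m:=Q_m-b_m$ does the job: it is a polyhedron chain, $P_m\ora{\fn}R$ because $\fn(b_m)\to0$, $\spt(P_m)\subseteq\spt(Q_m)\cup\spt(b_m)\subseteq K+\oball(0,\varepsilon)$, and from
\[
  \partial P_m=\partial Q_m-(N_m-M_m)=(\partial Q_m)\mr W+M_m
\]
one reads off $\spt(\partial P_m)\subseteq\overline{W}\cup\spt(M_m)\subseteq B+\oball(0,\varepsilon)$. The main obstacle will be the support-controlled filling estimate together with the attendant bookkeeping: one must keep the slices produced by the slicing argument, the isoperimetric fillings of the residual cycles, and the supports of $M_m$ and $b_m$ simultaneously inside the prescribed $\varepsilon$-neighbourhoods, which is exactly what dictates the intermediate radii chosen above; apart from this, the argument is routine.
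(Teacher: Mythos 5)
Your argument is correct, and its skeleton is the same as the paper's: restrict to a polyhedral neighbourhood (our Lemma \ref{le:sur}), isolate the ``escaping boundary'' cycle, fill its boundary by a small chain supported near $\spt(\partial R)$, fill the corrected small cycle by a chain supported near $\spt(R)$, and subtract; your final bookkeeping ($P_m=Q_m-b_m$, $\partial P_m=(\partial Q_m)\mr W+M_m$) matches the paper's $P_m-T_m$ construction step for step. The one genuine difference is where the two controlled-support small fillings come from: the paper simply invokes Lemma 7.7 of \cite{Fleming:1966}, which is exactly the statement you call the ``support-controlled filling estimate,'' whereas you re-derive it from a near-optimal polyhedral decomposition $\Gamma=e+\partial f$, a slicing of $f$ by a polyhedral neighbourhood, and the isoperimetric inequality \cite[(7.6)]{Fleming:1966} with support control (the same ingredients, incidentally, that the paper uses in the proof of Lemma \ref{le:fnsm}), with discreteness of $G$ handling the $0$-dimensional residual cycle. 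This buys self-containedness at the cost of length; citing Fleming's Lemma 7.7 shortcuts your second paragraph entirely. Two small points: in your parenthetical sketch of the isoperimetric step the grid should be taken \emph{coarse} (edge length comparable to a power of the mass, so the deformed cycle has mass below that of any nonzero cycle on the grid), not fine; and one should note, as your argument implicitly requires, that the isoperimetric filling of a polyhedral cycle can be taken polyhedral, which is how the paper also uses \cite[(7.6)]{Fleming:1966}. Neither affects the validity of your proof.
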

\begin{proof}
	We take a sequence of polyhedral chains $\{P_m\}$ such that 
	\[
		P_m\ora{\fn}R \text{ and } \spt P_m \subseteq \spt R
		+\oball(0,\varepsilon/10).
	\]
	Then $\partial P_m\ora{\fn} \partial R$. By Lemma \ref{le:sur}, we can
	find a compact polyhedron $V$ such that 
	\[
		\spt \partial R+\oball(0,\varepsilon/10) \subseteq V \subseteq \spt
		\partial R+\oball(0,\varepsilon/5) \text{ and } (\partial P_m)\mr V
		\ora{\fn} \partial R.
	\]
	Thus $\partial ((\partial P_m)\mr V)\ora{\fn} 0 $. By Lemma 7.7 in
	\cite{Fleming:1966}, we can find polyhedral chains $Q_m$ such that 
	\[
		\partial Q_m=\partial ((\partial P_m)\mr V),\ \mass(Q_m)\to 0, \text{ and
		}\spt Q_m\subseteq \spt \partial R+\oball(0,3\varepsilon/10).
	\]
	Thus $(\partial P_m)\mr V -Q_m\ora{\fn} \partial R$, $\partial((\partial P_m)\mr 
	V -Q_m)=0$, and 
	\[
		\spt ((\partial P_m)\mr V -Q_m)\subseteq \spt 
		\partial R+\oball(0,3\varepsilon/10).
	\]
	We get that 
	\[
		\partial P_m - (\partial P_m)\mr V +Q_m\ora{\fn} 0 \text{ and } \partial
		(\partial P_m - (\partial P_m)\mr V +Q_m)=0.
	\]
	Again by Lemma  7.7 in \cite{Fleming:1966}, we can find polyhedral chains
	$T_m$ such that
	\[
		\partial T_m=\partial P_m - (\partial P_m)\mr V +Q_m,\ \mass(T_m)\to 0
		\text{ and }\spt T_m \subseteq \spt(R)+ \oball(0, 2\varepsilon/5).
	\]
	Thus 
	\[
		P_m-T_m\ora{\fn} R \text{ and } \spt (P_m-T_m)\subseteq \spt R +\oball(0,2
		\varepsilon/5),
	\]
	\[
		\partial (P_m-T_m)=(\partial P_m)\mr V -Q_m\ora{\fn} \partial R,
	\]
	\[
		\spt \partial(P_m-T_m)=\spt ((\partial P_m)\mr V -Q_m) \subseteq
		\spt(\partial R) +\oball(0,3 \varepsilon/10).
	\]
	Hence $\{P_m-T_m\}$ is an our desired sequence. 
\end{proof}
\begin{lemma}\label{le:apfc}
	For any flat chain $R\in \FC_{k}(\mathbb{R}^n;G)$ with nonempty compact support,
	we can find a sequence of polyhedral chains $\{P_m\}\subseteq \PC_{k}(\mathbb{R}^n;G)$ such that
	\begin{equation}\label{eq:apfc1}
		P_m\ora{\fn} R,\ \mass(P_m)\to \mass(R),\ \spt(P_m)\ora{\HD}\spt(R)
		\text{ and } \spt(\partial P_m)\ora{\HD}\spt(\partial R).
	\end{equation}
\end{lemma}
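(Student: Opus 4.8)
The plan is to build the approximating sequence in two steps: first produce polyhedral chains converging to $R$ both in flat norm \emph{and} in mass, with the supports of the chains and of their boundaries trapped in shrinking neighbourhoods of $\spt R$ and $\spt\partial R$ respectively; then observe that these one-sided support inclusions are automatically complemented by the reverse ones, so that the convergence of supports is in Hausdorff distance.

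\emph{Step 1.} One may assume $\mass(R)<\infty$: if $\mass(R)=\infty$, mass convergence is automatic from the lower semicontinuity built into the definition of $\mass$, and it suffices to apply Lemma \ref{le:aps} with $\varepsilon=1/j$ and diagonalise. Fix $\varepsilon>0$. By the definition of $\mass(R)$ there is a polyhedral sequence $Q_m\ora{\fn}R$ with $\mass(Q_m)\to\mass(R)$. Applying Lemma \ref{le:sur} I restrict to a polyhedron $V$ with $\spt R+\oball(0,\varepsilon/20)\subseteq V\subseteq\spt R+\oball(0,\varepsilon/10)$ and $Q_m\mr V\ora{\fn}R$; since $\mass(Q_m\mr V)\le\mass(Q_m)\to\mass(R)$ while lower semicontinuity gives $\mass(R)\le\liminf_m\mass(Q_m\mr V)$, the restricted sequence still has $\mass(Q_m\mr V)\to\mass(R)$, and now $\spt(Q_m\mr V)\subseteq\spt R+\oball(0,\varepsilon/10)$. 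The restriction may have introduced boundary on $\partial V$, so I would feed $\{Q_m\mr V\}$ into the construction in the proof of Lemma \ref{le:aps}: that construction modifies the input only by subtracting polyhedral chains $T_m$ with $\mass(T_m)\to 0$, hence preserves flat and mass convergence, and outputs $\spt(Q_m\mr V-T_m)\subseteq\spt R+\oball(0,\varepsilon)$ together with $\spt\partial(Q_m\mr V-T_m)\subseteq\spt\partial R+\oball(0,\varepsilon)$. Taking $\varepsilon=1/j$ and diagonalising, I obtain a single polyhedral sequence $P_m\ora{\fn}R$ with $\mass(P_m)\to\mass(R)$ and $\spt P_m\subseteq\spt R+\oball(0,\varepsilon_m)$, $\spt\partial P_m\subseteq\spt\partial R+\oball(0,\varepsilon_m)$ for some $\varepsilon_m\to 0$.

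\emph{Step 2.} It remains to check that any flat sequence $P_m\ora{\fn}R$ also satisfies $\spt R\subseteq\spt P_m+\oball(0,\delta_m)$ and $\spt\partial R\subseteq\spt\partial P_m+\oball(0,\delta_m)$ for some $\delta_m\to 0$. The key fact is that, for a closed set $F$, the set $\{A\in\FC_k(\mathbb{R}^n;G):\spt A\subseteq F\}$ is $\fn$-closed; in particular an $\fn$-limit of polyhedral chains all supported in $F$ is itself supported in $F$. Consequently, if the uniform inclusion failed for some $\varepsilon>0$, there would be $m_k\to\infty$ and $x_k\in\spt R$ with $\dist(x_k,\spt P_{m_k})\ge\varepsilon$; passing to a subsequence $x_k\to x\in\spt R$ (using compactness of $\spt R$), one gets $\spt P_{m_k}\cap\oball(x,\varepsilon/2)=\emptyset$ for large $k$, so $R$ would be supported in the closed set $\mathbb{R}^n\setminus\oball(x,\varepsilon/2)$, contradicting $x\in\spt R$. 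This yields the claimed $\delta_m\to 0$. The same argument, applied to $\partial P_m\ora{\fn}\partial R$ (the boundary being $\fn$-continuous, $\fn(\partial P_m-\partial R)\le\fn(P_m-R)$) and to the compact set $\spt\partial R\subseteq\spt R$, handles the boundaries. Combining Steps 1 and 2 and relabelling, the sequence $(P_m)$ satisfies \eqref{eq:apfc1}.

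\emph{Main difficulty.} The delicate point is Step 1: keeping mass convergence while simultaneously forcing \emph{both} support inclusions. Localisation (Lemma \ref{le:sur}) shrinks the support of a chain but creates boundary on the cut; the clean boundary control is supplied by the construction behind Lemma \ref{le:aps}; these must be applied in this order, and one has to track that each modification is by a chain of vanishing mass so that $\mass(P_m)\to\mass(R)$ is not lost. Everything else --- lower semicontinuity of $\mass$, $\fn$-continuity of $\partial$, $\fn$-closedness of ``supported in $F$'', and compactness of $\spt R$ --- is routine.
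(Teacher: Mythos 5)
Your proof is correct and follows essentially the same route as the paper's: a mass-minimizing polyhedral sequence is localized near $\spt R$ via Lemma \ref{le:sur} and its boundary support is repaired by the vanishing-mass corrections behind Lemma \ref{le:aps} (Fleming's Lemma 7.7), mass convergence surviving because every correction has mass tending to $0$ and mass is lower semicontinuous under flat convergence, while the reverse support inclusions follow, as in the paper, from the definition of support together with compactness of $\spt R$. The only differences are cosmetic: the paper glues a support-controlled sequence obtained from Lemma \ref{le:aps} to a localized mass-controlled sequence, whereas you rerun the construction of Lemma \ref{le:aps} on the single localized sequence, and your compactness-contradiction argument in Step 2 replaces the paper's pointwise-convergence-plus-equi-Lipschitz upgrade to uniform convergence.
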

\begin{proof}
	First, we will show that, for any sequence of polyhedral chains
	$\{Q_m\}\subseteq \PC_{k}
	(\mathbb{R}^n;G)$ with $Q_m\ora{\fn} R\neq 0$ holds for any $x\in \spt R$, 
	\begin{equation}\label{eq:apfc10}
		\lim_{m\to \infty}\dist(x,\spt Q_m)= 0.
	\end{equation}
	Indeed, if we assume by contradiction that for some $x_0\in \spt R$,
	\begin{equation}\label{eq:11}
		\limsup_{m\to \infty}\dist(x_0,\spt Q_m)=2\delta>0,
	\end{equation}
	then pass to a subsequence, we may assume that $\cball(x_0,\delta)\cap \spt 
	Q_m =\emptyset$ for $m$ large enough. We take $I$ an open interval such that
	$x_0\in I\subseteq \oball(x_0,\delta)$. Then $Q_m\mr I=0$ for $m$ large enough,
	thus $Q_m \mr I\ora{\fn}0$. We put $Q_m'=Q_m \mr (\mathbb{R}^n\setminus I)$, 
	then $Q_m'\in \PC_{k}(\mathbb{R}^n;G)$ and $Q_m'\ora{\fn} R$, thus $R$ is
	supported by $\mathbb{R}^n\setminus I$. By the definition of
	$\spt R$, we have that $I\cap \spt R =\emptyset$. That leads to a contradiction,
	and \eqref{eq:apfc10} holds. 

	Next, we show that for any 
	$\varepsilon>0$ there exists $N_{\varepsilon}>0$ such that
	\begin{equation}\label{eq:apfc20}
		\spt R\subseteq \spt Q_m +\oball(0,\varepsilon),\ \forall m\geq
		N_{\varepsilon}.
	\end{equation}
	Define functions $f_m:\spt R\to \mathbb{R}$ by $f_m(x)=\dist(x,\spt Q_m)$. 
	Then $\Lip(f_m)\leq 1$. Since $\spt R$ is compact, we get that $f_m$
	uniformly converges to $ 0$, and thus \eqref{eq:apfc20} holds.

	By Lemma \ref{le:aps}, there is a sequence of polyhedral chains $\{R_m''\}
	\subseteq \PC_{k}(\mathbb{R}^n;G)$ such that
	\[
		R_m''\ora{\fn} R,\
		\spt(R_m'')\subseteq \spt R+ \oball(0,\varepsilon_m) \text{ and } 
		\spt(\partial R_m'')\subseteq \spt \partial R+ \oball(0,\varepsilon_m), 
	\]
	where $\varepsilon_m\to 0$.
	By Theorem 5.6 in \cite{Fleming:1966}, we can find polyhedral chains 
	$R_m'\in \PC_{k} (\mathbb{R}^n;G)$ such that $R_m'\ora{\fn} R$ and 
	$\mass(R_m')\to \mass(R)$. 
	By Lemma \ref{le:sur}, we can find compact polyhedron $V$ such that 
	\[
		\spt R+\oball(0,\varepsilon)\subseteq V \subseteq \spt
		R+\oball(0,2\varepsilon) \text{ and } R_m'\mr V\ora{\fn} R.
	\]
	So we get that $\partial (R_m'\mr V)\ora{\fn} \partial R$. Thus 
	\[
		\partial (R_m''-(R_m'\mr V))\ora{\fn}0.
	\]
	By Lemma 7.7 in \cite{Fleming:1966}, we can find polyhedral chains $Q_m''\in
	\PC_{k}(\mathbb{R}^n;G)$ such that 
	\[
		\partial Q_m''=\partial (R_m''-R_m'\mr V),\ \mass(Q_m'')\to 0 \text{ and } 
		\spt(Q_m'')\subseteq \spt R+\oball(0,2 \varepsilon).
	\]
	Put $P_m'=Q_m''+R_m'\mr V$. Then
	\[
		P_m'\ora{\fn} R,\ \spt P_m' \subseteq \spt R +\oball(0,2 \varepsilon),\
		\spt \partial P_m' \subseteq \spt \partial R +\oball(0,2 \varepsilon)
	\]
	and 
	\[
		\limsup_{m\to \infty}\mass(P_m')\leq \limsup_{m\to
		\infty}\mass(Q_m'')+\limsup_{m\to \infty}\mass(R_m'\mr V)\leq
		\limsup_{m\to \infty}\mass(R_m')=\mass(R).
	\]
	By lower semi-continuity of mass, we have that $\mass(P_m')\to \mass(R)$.
	Therefore, we can find polyhedral chains $P_m$ such that \eqref{eq:apfc1} hold. 

\end{proof}
\begin{lemma}\label{le:slicing}
	Let $R\in \FC_{k}^c(\mathbb{R}^n;G)$ be any flat chain with $\mass(R)<\infty$.
	Suppose $f:\mathbb{R}^n\to \mathbb{R}$ is a Lipschitz mapping. If $\spt (\partial
	R)\subseteq \{f\leq 0\}$, then we have that for 
	$\HM^1$-a.e. $s\in (0,\infty)$, $ \mass(\partial (R \mr \{f<s\})-
	\partial R)<\infty$, and there is a constant $c=c(k)>0$ such that for any
	$b>a\geq 0$,
\begin{equation}\label{eq:slicing1}
		\int_a^b \mass(\partial (R \mr \{f<s\})-\partial R) ds\leq
		c\Lip(f)\mass(R\mr\{a<f<b\}).
\end{equation}
\end{lemma}
\begin{proof}
	By Lemma \ref{le:apfc}, we can find polyhedral chains $P_m\in
	\PC_{k}(\mathbb{R}^n;G)$ such that
	\[
		P_m\ora{\fn} R,\ \mass(P_m)\to \mass(R)
		\text{ and } \spt(\partial P_m)\ora{\HD}\spt(\partial R).
	\]
	Let $\mu_{R}$ be the Radon measure defined by
	$\mu_{R}(X)=\mass(R \mr X)$.
	For any $s\in \mathbb{R}$, we put $X_s=\{f<s\}$, $Y_s=\{f=s\}$,
	and $\Lambda=\{s\in (0,\infty):\mu_R(Y_s)=0\}$. Then
	$\HM^1((0,\infty)\setminus \Lambda)=0$. 
	For any $s\in \mathbb{R}$, we put $B^s=\partial (R \mr X_s)-\partial R$, and
	$B_m^s=\partial (P_m \mr X_s)-(\partial P_m) \mr X_s$. Then for any $s\in
	\Lambda$, we have that $B_m^s\ora{\fn} B^s$, $\spt(B^s)\subseteq Y_s$, and
	$\fn(B^t-B^s)\to 0$ as $t\to s$. We consider the functions
	$g,g_m:[0,\infty)\to [0,\infty]$ given by 
	\[
		g(s)=\mass(B^s) \text{ and }g_m(s)=\mass(B_m^s).
	\]
	Then $g$ is lower semi-continuous and 
	\[
		g(s)\leq \liminf_{m\to \infty}g_m(s), \ \forall s\in \Lambda.
	\]
	By Fatou's Lemma, and the coarea inequality \cite[Theorem
	2.10.25]{Federer:1969}, we get that for any $b>a>0$, if $a,b\not \in
	\Lambda$, then 
	\[
		\int_a^b g(s)ds\leq \liminf_{m\to \infty} \int_a^b g_m(s) ds \leq \liminf_{m\to
		\infty}c_1\mass(P_m \mr \{a<f<b\})=c_1\mass(R\mr \{a<f<b\}),
	\]
	where $c_1=c(k)\cdot \Lip(f)$. Thus we get that \eqref{eq:slicing1}
	holds for any $b>a\geq 0$.
\end{proof}
\begin{lemma}\label{le:fnsm}
	Let $X\subseteq \mathbb{R}^n$ be a compact Lipschitz neighborhood retract.
	Then there exist an $\varepsilon>0$ and a constant $c>0$ such that for any
	$\mathscr{S}\in \{\PC,\LC,\NC,\RC,\FC\}$ and $\sigma\in \mathscr{S}_{k}(X;G)$
	with $\partial\sigma=0$ and $\fn(\sigma)<\varepsilon$, we can find
	$\beta\in \mathscr{S}_{k+1}(X;G)$ such that 
	\begin{equation}\label{eq:fnsm0}
		\partial\beta=\sigma  \text{ and }\mass(\beta)\leq c\fn(\sigma).
	\end{equation}
\end{lemma}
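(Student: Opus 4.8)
The plan is to push $\sigma$ onto $X$ with the Lipschitz retraction after first moving it into a fixed tubular neighbourhood of $X$, and then to close up the resulting error by an isoperimetric filling that again stays near $X$.

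Fix an open set $U\supseteq X$ and a Lipschitz retraction $\rho\colon U\to X$ with $\rho|_X=\id_X$; write $L=\Lip(\rho)\ge 1$ and choose $\delta_0\in(0,1)$ with $X+\cball(0,2\delta_0)\subseteq U$. The constants $\varepsilon>0$ (small) and $c$ (large) will depend only on $n,k,L,\delta_0$, i.e.\ only on $X$; the smallness of $\varepsilon$ will be forced by the isoperimetric step. Given $\sigma$ as in the statement with $t:=\fn(\sigma)<\varepsilon$, I may assume $t>0$ (else $\sigma=0$ and $\beta=0$ works). First I would invoke the infimum description of the flat norm (Theorem~2.2 in \cite{Fleming:1966}) to write $\sigma=Q+\partial R$ with $Q\in\FC_k(\mathbb{R}^n;G)$, $R\in\FC_{k+1}(\mathbb{R}^n;G)$ and $\mass(Q)+\mass(R)<2t$; in particular $\mass(Q)<\infty$ and $\partial Q=0$.

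Next I would slice $R$ by the $1$-Lipschitz function $d(x)=\dist(x,X)$. By the coarea inequality for flat chains (\cite[2.10.25]{Federer:1969} and the proof of Lemma~\ref{le:slicing}), for a.e.\ $s>0$ the boundary slice $\langle R,d,s\rangle:=\partial(R\mr\{d<s\})-(\partial R)\mr\{d<s\}$ is a flat $k$-chain supported in $\{d=s\}$, and $\int_0^{\delta_0/2}\mass(\langle R,d,s\rangle)\,ds\le\mass(R)<2t$. Choosing a generic $s^*\in(0,\delta_0/2)$ with $\mass(\langle R,d,s^*\rangle)<4t/\delta_0$, set $\tilde R=R\mr\{d<s^*\}$ and $\tilde Q=\sigma-\partial\tilde R$. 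Since $\sigma\mr\{d<s^*\}=\sigma$ (as $\spt\sigma\subseteq X=\{d=0\}$) the slicing formula gives $\tilde Q=Q\mr\{d<s^*\}-\langle R,d,s^*\rangle$, so $\partial\tilde Q=0$, $\spt\tilde Q,\spt\tilde R\subseteq\{d\le s^*\}\subseteq X+\cball(0,\delta_0/2)\subseteq U$, $\mass(\tilde R)\le\mass(R)<2t$, and $\mass(\tilde Q)\le\mass(Q)+4t/\delta_0<2t(1+2/\delta_0)$. Now $\rho_\sharp\tilde Q,\rho_\sharp\tilde R$ are defined, supported in $X$, with $\mass(\rho_\sharp\tilde Q)\le L^k\mass(\tilde Q)$ and $\mass(\rho_\sharp\tilde R)\le L^{k+1}\mass(\tilde R)$; and since $\spt\sigma\subseteq X$ and $\rho|_X=\id_X$ we have $\rho_\sharp\sigma=\sigma$, hence $\sigma=\rho_\sharp\tilde Q+\partial\rho_\sharp\tilde R$. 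The chain $\rho_\sharp\tilde Q$ is a flat $k$-cycle supported in $X$ whose mass is $<L^k\cdot 2\varepsilon(1+2/\delta_0)$, as small as we wish. Applying the isoperimetric inequality for flat $G$-chains (which follows from the deformation theorem; cf.\ \cite{Federer:1969}, \cite{Fleming:1966}) I obtain $S\in\FC_{k+1}(\mathbb{R}^n;G)$ with $\partial S=\rho_\sharp\tilde Q$, $\mass(S)\le\gamma(n)\mass(\rho_\sharp\tilde Q)^{(k+1)/k}$ and $\spt S\subseteq\spt(\rho_\sharp\tilde Q)+\oball\bigl(0,\gamma(n)\mass(\rho_\sharp\tilde Q)^{1/k}\bigr)$. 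Shrinking $\varepsilon$ so that $\gamma(n)\mass(\rho_\sharp\tilde Q)^{1/k}<\delta_0$ forces $\spt S\subseteq X+\cball(0,\delta_0)\subseteq U$ and $\mass(S)\le\mass(\rho_\sharp\tilde Q)$. Then $\rho_\sharp S$ is defined, supported in $X$, with $\mass(\rho_\sharp S)\le L^{k+1}\mass(\rho_\sharp\tilde Q)\le L^{2k+1}\mass(\tilde Q)$ and $\partial\rho_\sharp S=\rho_\sharp\partial S=\rho_\sharp\rho_\sharp\tilde Q=\rho_\sharp\tilde Q$ (again because $\rho_\sharp\tilde Q$ is supported in $X$).

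I would then set $\beta:=\rho_\sharp S+\rho_\sharp\tilde R$, so that $\partial\beta=\rho_\sharp\tilde Q+\partial\rho_\sharp\tilde R=\sigma$, $\spt\beta\subseteq X$, and $\mass(\beta)\le L^{2k+1}\mass(\tilde Q)+L^{k+1}\mass(\tilde R)\le c\,t=c\,\fn(\sigma)$ with $c=c(n,k,L,\delta_0)$. To finish, one checks $\beta\in\mathscr{S}_{k+1}(X;G)$ in each case: for $\mathscr{S}=\FC$ nothing is needed; for $\mathscr{S}=\NC$ one has $\mass(\partial\beta)=\mass(\sigma)<\infty$ since $\sigma\in\NC_k$, so $\mathsf{N}(\beta)<\infty$; and for $\mathscr{S}=\RC$ and $\mathscr{S}=\LC$ one chooses the decomposition $\sigma=Q+\partial R$, the slices, and the isoperimetric filling $S$ within the class of rectifiable (resp.\ polyhedral/Lipschitz) chains, which is possible, and uses that $\rho_\sharp$ preserves rectifiability (resp.\ sends polyhedral chains to Lipschitz chains). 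The step I expect to be the main obstacle is the slicing reduction together with the bookkeeping that keeps every object inside $U$ while its mass stays comparable to $\fn(\sigma)$: the constant $1/\delta_0$ must be fed in carefully and $\varepsilon$ must be chosen (depending on $X$) so that the isoperimetric filling does not escape $U$. A secondary point requiring care is the locality identity $\rho_\sharp\sigma=\sigma$ for chains supported in $X$, and the verification placing $\beta$ in the smaller classes $\RC_{k+1}$ and $\LC_{k+1}$.
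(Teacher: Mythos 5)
Your route for $\mathscr{S}\in\{\FC,\NC,\RC\}$ is essentially workable but not as self-contained as you suggest, and its order of operations is genuinely different from the paper's (you decompose $\sigma=Q+\partial R$ near-optimally, slice $R$ by the Euclidean distance to $X$, retract, and only then fill the small retracted cycle; the paper polyhedralizes via the deformation theorem, or fills first in the $\FC$ case, and slices afterwards). The slicing step as you state it is not covered by what you cite: Federer 2.10.25 is a coarea inequality for measures, and Lemma \ref{le:slicing} assumes $\spt(\partial R)\subseteq\{f\le 0\}$, whereas your $R$ has $\partial R=\sigma-Q$ with $Q$ a finite-mass chain whose support need not be anywhere near $X$, and $(\partial R)\mr\{d<s\}$ is not even defined a priori when $\mass(\partial R)=\infty$ (the $\FC$ case). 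This is repairable: either rerun the polyhedral-approximation/Fatou argument of Lemma \ref{le:slicing} in this slightly wider setting, using $(\partial R)\mr\{d<s\}=\sigma-Q\mr\{d<s\}$, or, more simply, first fill the small cycle $Q$ isoperimetrically by some $S_Q$ and slice $R+S_Q$, whose boundary is exactly $\sigma$ and hence supported in $X$, so Lemma \ref{le:slicing} applies as stated; note that this second fix essentially reproduces the paper's ordering in its $\FC$ argument. Once this is done, $\FC$ and $\NC$ are fine, and $\RC$ follows immediately from White's theorem (finite-mass flat chains over a discrete group are rectifiable), which the paper quotes, so your ``choose everything rectifiable'' is not needed.

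The genuine gap is the case $\mathscr{S}=\LC$. Your final sentence asserts that the decomposition, the slices and the isoperimetric filling can be carried out within the class of polyhedral or Lipschitz chains, but that is precisely the crux and it fails for your construction: one cannot write $\sigma=Q+\partial R$ with $Q,R$ polyhedral when $\sigma$ is merely a Lipschitz chain, and the restriction of a polyhedral (or Lipschitz) chain to the sublevel set $\{d<s\}$ of the Euclidean distance function is in general only a finite-mass flat chain, not a Lipschitz chain; the same is true of a generic isoperimetric filling. So the $\beta$ you produce is only a rectifiable flat chain, which does not prove the $\LC$ statement (which the paper needs, e.g.\ in Lemma \ref{le:appl}). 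The paper's proof is organized exactly to avoid this: it first applies the deformation theorem to write $\sigma=Q'+\partial C$ with $Q'$ polyhedral and $C$ in the same class $\mathscr{S}$, with $\mass(C)\le c'\varepsilon'\mass(\sigma)\le c'\fn(\sigma)$ thanks to the grid size $\varepsilon'\le\fn(\sigma)/\mass(\sigma)$; it fills the polyhedral cycle $Q'$ polyhedrally by Fleming's Lemma 5.4 with $\mass(P)\le c_1\fn(Q')$; and it slices with the $\ell^\infty$-distance to a polyhedral neighborhood $\Delta$, a piecewise-linear function whose sublevel sets are polyhedral, so every restriction and the isoperimetric filling of the slice error stay polyhedral, and the final $\rho_\sharp(P\mr\{f<s\}-S+C)$ is a Lipschitz chain. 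You need either this mechanism or a separate argument to place $\beta$ in $\LC_{k+1}(X;G)$.
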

\begin{proof}
	Without loss of generality, we assume $\inf\{|g|:g\in G,g\neq 0\}= 1$.
	Since $X$ is a Lipschitz neighborhood retract, there exists an open set
	$U\supseteq X$ and a Lipschitz mapping $\rho:U\to X$ such that
	$\rho\vert_X=\id_{X}$. We assume $X+\oball(0,\delta)\subseteq U$, chose
	$0<\varepsilon\leq (\delta/c_0)^{k+1}$, and let $c_0>10 n$ be a constant
	which will be
	chosen later. For any $\mathscr{S}\in \{\PC,\LC,\NC,\RC\}$ and any $\sigma\in
	\mathscr{S}_{k}(X;G)$ with $\partial \sigma=0$ and $\fn(\sigma)<\varepsilon$,
	applying the deformation theorem \cite[Deformation theorem]{Fleming:1966}, 
	there exist a constant $c'=c'(n,k)$ and an $\varepsilon'$-cubical grid
	$\chi_{\varepsilon'}$ with $\varepsilon'\leq
	\min\{\delta/(10 n),\fn(\sigma)/\mass(\sigma)\}$, a
	polyhedral chain $Q'\in \PC_k(\mathbb{R}^n;G)$ and $C\in
	\mathscr{S}_{k+1}(\mathbb{R}^n;G)$ such that 
	\begin{itemize}
		\item $\sigma=Q'+\partial C$;
		\item $\mass(C)\leq c' \varepsilon'\mass(\sigma)$;
		\item $\spt(Q')\cup\spt(C)\subseteq \spt(\sigma)+\oball(0,2n \varepsilon')$.
	\end{itemize}
	Since $Q'\in \PC_k(\mathbb{R}^n;G)$ and $\partial Q'=0$, by Lemma 5.4 in
	\cite{Fleming:1966}, we can find constant $c_1>0$ and $P\in \PC_{k+1}(
	\mathbb{R}^n;G)$ such that $\partial P=Q'$ and $\mass(P)\leq c_1\fn(Q')$.

	Let $\Delta$ be an open set which is a union of finite number of open
	polyhedra and such that $X+\oball(0,2n \varepsilon')\subseteq \Delta
	\subseteq X+\oball(0,\delta/10)$. Let 
	function $f:\mathbb{R}^n\to \mathbb{R}$ be defined by 
	\[
		f(x)=\inf\{\|x-y\|_{\infty}:y\in \Delta\}.
	\]
	Then we see that $f$ is Lipschitz and $\Lip(f)=1$
	We take $\varepsilon''=\fn(\sigma)^{1/(k+1)}$, and apply Theorem 5.7 in
	\cite{Fleming:1966} to get that 
	\[
		\int_0^{\varepsilon''}\mass \big(\partial (P\mr
		\{f<s\})-(\partial P)\mr \{f<s\}\big)ds\leq
		\mass\big(P\mr\{0<f<\varepsilon''\}\big),
	\]
	thus there exists $s\in (0,\varepsilon'')$ such that 
	\[
		\mass \big(\partial (P\mr
		\{f<s\})-(\partial P)\mr \{f<s\}\big)\leq
		\frac{1}{\varepsilon''}\mass(P).
	\]
	But $\mass(P)\leq c_1 \fn(Q')\leq c_1(\fn(\sigma)+\fn(\partial C))\leq
	c_1(1+c')\fn(\sigma)$, $\spt(\partial P)=\spt(Q')\subseteq \Delta$, by setting
	$R_s=\partial (P\mr \{f<s\})-Q'$, we have that $R_s\in
	\PC_k(\mathbb{R}^n;G)$ and 
	\[
		\mass(R_s)\leq c_2 \fn(\sigma)^{k/(k+1)}.
	\]
	Applying the isoperimetric inequality \cite[(7.6)]{Fleming:1966} to $R_s$, 
	there is a constant $c''$ and a polyhedral chain $S\in
	\PC_{k+1}(\mathbb{R}^n;G)$ such that 
	\[
		\partial S=R_s,\ \mass(S)\leq c'' \mass(R_s)^{(k+1)/k} \text{ and } 
		\spt(S)\subseteq \spt(R_s)+\oball(0,2n \varepsilon'''),
	\]
	where $\varepsilon'''=(c'')^{1/(k+1)}\mass(R_s)^{1/k}$. Thus $\sigma=Q'+\partial
	C=\partial (P\mr \{f<s\})-\partial S +\partial C$ and 
	\[
		\mass\big(P\mr \{f<s\}-S+C\big)\leq \mass(P)+\mass(S)+\mass(C)\leq
		c_3 \fn(\sigma).	
	\]
	We see that $\spt(P\mr \{f<s\})\subseteq
	\Delta+\oball(0,\varepsilon'')\subseteq X+\oball(0,\delta/5)$ and
	$\spt(C)\subseteq X+\oball(0,\delta/5)$. If we take
	$c_0>10 n$ such that $c_0>10 n (c'')^{1/(k+1)}c_2^{1/k}$, then  
	\[
		2 n \varepsilon'''=2n (c'')^{1/(k+1)}\mass(R_s)^{1/k}\leq
		2n (c'')^{1/(k+1)}c_2^{1/k}\cdot (\delta/c_0)<\delta/5,
	\]
	and $\spt(S)\subseteq \Delta+\oball(0,2n \varepsilon''')\subseteq
	X+\oball(0,\delta)$. We get that 
	\[
		\sigma=\rho_{\sharp}\sigma=\partial \Big(\rho_{\sharp}\big( (P\mr \{f<s\})- S +
		C\big)\Big),
	\]
	$\rho_{\sharp}( (P\mr \{f<s\})- S + C)\in \mathscr{S}_{k+1}(X;G)$, and 
	\[
		\mass\Big(\rho_{\sharp}\big( (P\mr \{f<s\})- S + C\big)\Big)\leq
		\Lip(\rho)^{k+1}c_3\fn(\sigma).
	\]
	Hence, \eqref{eq:fnsm0} holds for $\beta=\rho_{\sharp}( (P\mr \{f<s\})- S +
	C)$ and $c=\Lip(\rho)^{k+1}c_3$.

	If $\mathscr{S}=\FC$, then by the definition of flat norm, there exist 
	$S\in \FC_{k+1}(\mathbb{R}^n;G)$ such that 
	\[
		\mass(\sigma-\partial S)+\mass(S)<2\fn(\sigma).
	\]
	Applying the isoperimetric inequality \cite[(7.6)]{Fleming:1966} to 
	$(\sigma-\partial S)$, there exits $T\in \FC_{k+1}(\mathbb{R}^n;G)$ such that
	$\partial T=\sigma-\partial S$ and 
	\[
		\mass(T)\leq c''\mass((\sigma-\partial S))^{(k+1)/k} \leq
		c''(2\fn(\sigma))^{(k+1)/k}.
	\]
	We take $R=S+T$. Then $\partial R=\sigma$, $\mass(R)\leq c_4\fn(\sigma)<\infty$,
	and by Lemma \ref{le:slicing}, we have that 
	\[
		\int_0^{\varepsilon''}\mass \big(\partial (R\mr
		\{f<s\})-\sigma\big)ds\leq c(k)
		\mass\big(R\mr\{0<f<\varepsilon''\}\big),
	\]
	thus there exists $s\in (0,\varepsilon'')$ such that 
	\[
		\mass\big(\partial (R\mr \{f<s\})-\sigma\big)\leq
		\frac{c(k)}{\varepsilon''}\mass(R)\leq c(k)c_4 \fn(\sigma)^{k/(k+1)}.
	\]
	Applying the isoperimetric inequality \cite[(7.6)]{Fleming:1966} to
	$\sigma-\partial (R\mr \{f<s\})$, there is a flat chain $A\in 
	\FC_{k+1}(\mathbb{R}^n;G)$ such that $\partial A=\sigma-\partial (R\mr
	\{f<s\})$, $\spt(A)\subseteq \{f<s\}+\oball(0,2n \varepsilon''')\subseteq U$, and 
	\[
		\mass(A)\leq c''\mass \big(\sigma-\partial (R\mr
		\{f<s\})\big)^{(k+1)/k}\leq c''(c(k)c_4)^{(k+1)/k}\fn(\sigma)=c_5\fn(\sigma).
	\]
	Thus $\sigma=\partial (A+R\mr \{f<s\})$, and
	$\sigma=\rho_{\sharp}\sigma=\partial \rho_{\sharp}(A+R\mr \{f<s\})$, 
	\[
		\mass(\rho_{\sharp}(A+R\mr \{f<s\})\leq \Lip(\rho)^{k+1}\mass(A+R\mr
		\{f<s\})\leq \Lip(\rho)^{k+1}(c_4+c_5)\fn(\sigma).
	\]
	Hence, \eqref{eq:fnsm0} holds for $\beta=\rho_{\sharp}(A+ R\mr \{f<s\})$ and
	$c=\Lip(\rho)^{k+1}(c_4+c_5)$.

\end{proof}

\begin{lemma}
	\label{le:appl}
	If $X\subseteq\mathbb{R}^{n}$ is a compact Lipschitz neighborhood retract,
	$T\in\mathscr{F}_{k}(X;G)$ is flat $k$-chain such that  $\partial T\in
	\LC_{k-1}(X;G)$, then we can find a sequence of Lipschitz chains
	$\{T_{m}\}_{m\geq 1}$ such that $\partial T_{m}=\partial T$ and 
	$\fn(T-T_{m})\to 0$.
\end{lemma}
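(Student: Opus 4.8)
The plan is to approximate $T$ in flat norm by polyhedral chains whose supports hug $X$, push these onto $X$ with the retraction so as to obtain Lipschitz chains, and finally repair the small discrepancy between their boundaries and $\partial T$ by means of Lemma~\ref{le:fnsm}.

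Since $X$ is a compact Lipschitz neighborhood retract, fix an open set $U\supseteq X$ and a Lipschitz map $\rho\colon U\to X$ with $\rho\vert_X=\id_X$, and extend $\rho$ to a Lipschitz map $\tilde\rho\colon\mathbb{R}^n\to\mathbb{R}^n$. I would use throughout the identity $\tilde\rho_\sharp A=A$, valid for every flat chain $A$ with $\spt A\subseteq X$ (this is the fact already used in the proof of Lemma~\ref{le:fnsm} in the guise $\sigma=\rho_\sharp\sigma$); in particular $\tilde\rho_\sharp T=T$, and since $\spt\partial T\subseteq\spt T\subseteq X$ also $\tilde\rho_\sharp(\partial T)=\partial T$. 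By Lemma~\ref{le:aps} one can pick $P_m\in\PC_k(\mathbb{R}^n;G)$ with $P_m\ora{\fn}T$, $\spt P_m\subseteq X+\oball(0,1/m)$ and $\spt\partial P_m\subseteq\spt(\partial T)+\oball(0,1/m)$. For $m$ so large that $X+\oball(0,1/m)\subseteq U$, the chain $Q_m:=\tilde\rho_\sharp P_m$ is Lipschitz with $\spt Q_m\subseteq\rho(\spt P_m)\subseteq X$, and $\partial Q_m=\tilde\rho_\sharp(\partial P_m)$ is Lipschitz with $\spt\partial Q_m\subseteq X$.

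Because $\tilde\rho_\sharp$ is continuous for the flat norm, $Q_m\ora{\fn}\tilde\rho_\sharp T=T$ and $\partial Q_m\ora{\fn}\tilde\rho_\sharp(\partial T)=\partial T$. Hence $\sigma_m:=\partial T-\partial Q_m$ is a cycle ($\partial\sigma_m=0$), lies in $\LC_{k-1}(X;G)$, and satisfies $\fn(\sigma_m)\to 0$. Now I would invoke Lemma~\ref{le:fnsm} for the set $X$ with $\mathscr{S}=\LC$ and in degree $k-1$: there are $\varepsilon>0$ and $c>0$, independent of $m$, so that once $\fn(\sigma_m)<\varepsilon$ one finds $\beta_m\in\LC_k(X;G)$ with $\partial\beta_m=\sigma_m$ and $\mass(\beta_m)\le c\,\fn(\sigma_m)$. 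Set $T_m:=Q_m+\beta_m\in\LC_k(X;G)$. Then $\partial T_m=\partial Q_m+\sigma_m=\partial T$, while
\[
\fn(T-T_m)\le\fn(T-Q_m)+\mass(\beta_m)\le\fn(T-Q_m)+c\,\fn(\sigma_m),
\]
and both terms on the right tend to $0$; keeping the $T_m$ for $m$ large (and, say, setting $T_m:=0$ otherwise) yields the desired sequence.

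The one point that is not pure bookkeeping is the identity $\tilde\rho_\sharp A=A$ for flat chains supported in $X$, since $\tilde\rho$ coincides with the identity only on $X$, not on a neighborhood of it, so this is not merely locality of the pushforward. I expect to justify it by first treating chains $A$ of finite mass supported in a thin neighborhood $X+\oball(0,\delta)$ of $X$: after approximating such an $A$ by polyhedral chains with $\mass$ close to $\mass(A)$ and with boundary supports shrinking to $\spt\partial A$ (Lemma~\ref{le:apfc}), the homotopy formula for $H(t,x)=(1-t)x+t\tilde\rho(x)$ bounds the correction $\tilde\rho_\sharp A-A$ by a multiple of $\|\tilde\rho-\id\|_{\infty}$ on that neighborhood, which is $O(\delta)$, the boundary term being handled in the same way; one then passes to an arbitrary flat chain supported in $X$ by flat-norm density and the flat continuity of $\tilde\rho_\sharp$. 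This is exactly the reasoning implicit behind the step $\sigma=\rho_\sharp\sigma$ in the proof of Lemma~\ref{le:fnsm}, so I would either appeal to it there or record it as a short preliminary lemma; everything else is a routine combination of the approximation lemmas already established.
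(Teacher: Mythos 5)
Your proposal is correct and follows essentially the same route as the paper: the paper approximates $T$ by polyhedral chains via Lemma \ref{le:apfc} (you use Lemma \ref{le:aps}, which serves the same purpose), pushes them onto $X$ with the retraction to get Lipschitz chains $T_m'=\rho_{\sharp}P_m$, and then repairs the boundary with Lemma \ref{le:fnsm}, exactly as you do. The identity $\rho_{\sharp}T=T$ that you single out is relied on just as implicitly in the paper's own proof (in the step $\fn(T-T'_{m})=\fn(\rho_{\sharp}(T-P_m))$), so your extra care on that point is a refinement rather than a divergence.
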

\begin{proof}
	Since $X$ is a Lipschitz neighborhood retract, we can find an open set
	$U\supseteq X$ and a Lipschitz mapping $\rho:U\to X$ such that
	$\rho\vert_X=\id_X$. Applying Lemma \ref{le:apfc} to $T\in
	\mathscr{F}_{k}(X;G)$, we can find polyhedral chains $P_m\in
	\PC_{k}(\mathbb{R}^n;G)$ such that $P_m\ora{\fn} T$ and $\spt(P_m)\ora{\HD}\spt(T)
	$. We put $T_m'=\rho_{\sharp}(P_m)$, then $T_m'$ are Lipschitz chains in $X$ and
	satisfying that 
	\[
		\fn(T-T'_{m})=\fn(\rho_{\sharp}(T-P_m))\leq \Lip(\rho)^{k+1}\fn(T-P_m)\to 0.
	\]
	Then 
	\[
		\fn(\partial T-\partial T'_{m})\leq \fn(T-T'_{m})\to 0.
	\]
	Applying Lemma~\ref{le:fnsm}, for $m$ large enough, we can find $S_{m}\in
	\LC_{k}(X;G)$ such that $\partial T-\partial T'_{m}=\partial S_{m}$ and
	$\mass(S_{m})\to 0$. We now take $T_{m}=T'_{m}+S_{m}$, then we have that $\partial
	T_{m}=\partial T$, $T_{m}\in \LC_{k}(X;G)$ and 
	\[
		\fn(T-T_{m})=\fn(T-T'_{m}-S_{m})\leq \fn(T-T'_{m})+\fn(S_{m})\to 0.
	\]
\end{proof}
\begin{lemma}\label{le:cnon}
	For any $\mathscr{S}\in \{\NC,\RC,\FC\}$ and 
	$T\in\mathscr{S}^c_{d-1}(\mathbb{R}^{n};G)$ with $\partial T=0$, we have that
	\[
		\inf\left\{\size(S):\partial S=T,\ S\in\mathscr{S}_{d}(\mathbb{R}^{n};G) \right\}
		=\inf\left\{ \size(S): \partial S=T,\ S\in\mathscr{S}^c_{d}(\mathbb{R}^{n};G) \right\}
	\]
\end{lemma}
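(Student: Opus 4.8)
Since $\mathscr{S}_d^c(\mathbb{R}^n;G)\subseteq\mathscr{S}_d(\mathbb{R}^n;G)$, the inequality ``$\leq$'' is immediate: each compactly supported $S$ with $\partial S=T$ competes for the left-hand infimum, so that infimum is $\leq\size(S)$, and we take the infimum over such $S$. For ``$\geq$'' I plan to show that every competitor $S\in\mathscr{S}_d(\mathbb{R}^n;G)$ with $\partial S=T$ can be replaced, without increasing its size, by a competitor with compact support; the device is to push $S$ forward under the nearest-point projection onto a large ball. Concretely: fix $M>0$ with $\spt T\subseteq\oball(0,M)$; let $\rho\colon\mathbb{R}^n\to\cball(0,M)$ be the nearest-point projection onto the convex set $\cball(0,M)$, so $\rho(x)=x$ for $|x|\leq M$, $\rho(x)=Mx/|x|$ for $|x|\geq M$, and $\Lip(\rho)=1$; and set $S'=\rho_{\sharp}S$.

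The three points to verify are all soft. (i) $S'\in\mathscr{S}_d^c(\mathbb{R}^n;G)$: $\rho_{\sharp}$ carries each of $\FC_d$, $\RC_d$, $\NC_d$ into itself (for $\FC$ this is recalled in the introduction; for $\RC$ because $\rho_{\sharp}$ preserves $\LC_d$ and is continuous for $\mass$; for $\NC$ because $\mass(\rho_{\sharp}A)\leq\mass(A)$ and $\mass(\partial\rho_{\sharp}A)=\mass(\rho_{\sharp}\partial A)\leq\mass(\partial A)$), and $\spt S'\subseteq\overline{\rho(\spt S)}\subseteq\cball(0,M)$ is compact. (ii) $\partial S'=T$: since $\rho$ is the identity on the open neighborhood $\oball(0,M)$ of $\spt T$, Lemma~\ref{le:aps} furnishes polyhedral chains $P_m\ora{\fn}T$ with $\spt P_m\subseteq\oball(0,M)$, so $\rho_{\sharp}P_m=P_m$ and, by continuity of $\rho_{\sharp}$ for $\fn$, $\partial S'=\rho_{\sharp}\partial S=\rho_{\sharp}T=\lim_m P_m=T$. (iii) $\size(S')\leq\size(S)$: pick polyhedral $P_m\ora{\fn}S$ with $\liminf_m\size(P_m)=\size(S)$; then $\rho_{\sharp}P_m\ora{\fn}S'$, and for any representation $P_m=\sum_i g_i\sigma_i$ as a polyhedral chain one has $\rho_{\sharp}P_m=\sum_i\rho_{\sharp}(g_i\sigma_i)$, where each $\rho_{\sharp}(g_i\sigma_i)$ is a flat chain of finite mass supported on the compact set $\rho(\sigma_i)$; hence, using that flat size agrees with Hausdorff size for chains of finite mass (Proposition~8.2 of \cite{White:1999:Ann}) together with $\Lip(\rho)=1$,
\[
\size\bigl(\rho_{\sharp}(g_i\sigma_i)\bigr)\leq\HM^d(\rho(\sigma_i))\leq\HM^d(\sigma_i),
\]
so by subadditivity of $\size$ one gets $\size(\rho_{\sharp}P_m)\leq\sum_i\HM^d(\sigma_i)$, and taking the infimum over representations, $\size(\rho_{\sharp}P_m)\leq\size(P_m)$; therefore $\size(S')\leq\liminf_m\size(\rho_{\sharp}P_m)\leq\liminf_m\size(P_m)=\size(S)$.

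With (i)--(iii) established, $S'$ competes for the right-hand infimum and $\size(S')\leq\size(S)$; taking the infimum over $S$ yields ``$\geq$'', and together with ``$\leq$'' the two infima coincide. I do not anticipate a genuine obstacle here: the only step that is not purely formal is the size monotonicity in (iii) --- that a $1$-Lipschitz pushforward does not increase $\size$ --- which holds on polyhedral chains because a $1$-Lipschitz map does not increase Hausdorff measure and then passes to flat limits straight from the definition of $\size$ as an infimum over approximating polyhedral sequences.
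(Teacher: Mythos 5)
Your proposal is correct and follows essentially the same route as the paper: the paper's proof also pushes an arbitrary competitor forward under the $1$-Lipschitz radial retraction onto a large ball $\cball(0,R_0)\supseteq\spt T$, checks $\partial(\varphi_{\sharp}S)=\varphi_{\sharp}T=T$, and uses the size monotonicity $\size(\varphi_{\sharp}R)\leq\Lip(\varphi)^{d}\size(R)$ established by polyhedral approximation, exactly as in your step (iii). Your write-up merely spells out (i) and (ii) in slightly more detail than the paper does.
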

\begin{proof}
	We claim that for any Lipschitz mapping $\varphi:\mathbb{R}^n\to \mathbb{R}^n$
	and $R\in \FC_k(\mathbb{R}^n;G)$, we have that 
	\[
		\size(\varphi_{\sharp}R)\leq \Lip(\varphi)^k\size(R).
	\]
	Indeed, we take polyhedral chains $\{P_m\}$ such that $P_m\ora{\fn}R$ and
	$\size(P_m)\to \size(R)$. Then $\varphi_{\sharp}P_m\ora{\fn} \varphi_{\sharp}
	R$, and 
	\[
		\size(\varphi_{\sharp}R)\leq \liminf_{m\to \infty}\size(\varphi_{\sharp}P_m)\leq
		\liminf_{m\to \infty}\Lip(\varphi)^k \size(P_m)=\Lip(\varphi)^k\size(R).
	\]

	Let us tend to prove the lemma. We assume $\spt T \subseteq \cball(0,R_0)$
	for some $R_0>0$. We consider the mapping $\varphi:\mathbb{R}^n\to
	\mathbb{R}^n$ given by 
	\[
		\varphi(x)=\begin{cases}
			x,&x\in \cball(0,R_0),\\
			\frac{R_0}{|x|}x,& |x|>R_0.
		\end{cases}
	\]
	We see that $\varphi$ is Lipschitz with $\Lip(\varphi)=1$. For any $S\in
	\mathscr{S}(\mathbb{R}^n;G)$ with $\partial S=T$, we have that $\partial
	\varphi_{\sharp}S=\varphi_{\sharp}(\partial S)=\varphi_{\sharp}T=T$ and
	$\size(\varphi_{\sharp}S)\leq \size(S)$. Thus 
	\[
		\inf\{\size(S):\partial S=T,\ S\in\mathscr{S}_{d}(\mathbb{R}^{n};G) \}
		\geq\inf\{ \size(S): \partial S=T,\
		S\in\mathscr{S}^c_{d}(\mathbb{R}^{n};G)\}.
	\]
	The reverse inequality is clear, so we get that the equality.
\end{proof}
\begin{lemma}
	\label{le:LF}
	For any $\mathscr{S}\in \{\LC,\NC,\RC\}$ and 
	$T\in\mathscr{S}_{d-1}(\mathbb{R}^{n};G)$, if
	$\partial T=0$ and $\spt(T)$ is compact, then we have that
	\[
		\inf\left\{\size(S):\partial S=T,\ S\in\mathscr{S}_{d}(\mathbb{R}^{n};G) \right\}
		=\inf\left\{ \size(S): \partial S=T,\ S\in\mathscr{F}_{d}(\mathbb{R}^{n};G)
		\right\}.
	\]
\end{lemma}
\begin{proof}
	We let $S$ be any flat $d$-chain
	of compact support with $\partial S=T$. Suppose that $\spt(S)$ is contained 
	in a large ball $\oball(0,r)$. Then we can find a sequence of chains
	$\{S_{m}'\}_{m\geq 1}\subseteq \mathscr{S}_{d}(\mathbb{R}^n;G)$ with $ \spt(S_{m}')\subseteq
	\cball(0,r+1) $ such that $\fn(S_{m}'-S)\to 0$ and 
	\[
		\size(S)=\lim_{m\to\infty}\size(S_{m}').
	\]
	Since $ \fn(\partial S_{m}'-\partial S)\leq\fn(S_{m}'-S)\to 0$, and 
	$\partial S_{m}'-\partial S=\partial S_m'-T\in
	\mathscr{S}_{d-1}(\mathbb{R}^n;G)$, by Lemma~\ref{le:fnsm}, we can find chains
	$ W_{m}\in \mathscr{S}_{d}(\cball(0,r+1);G)$ such that $\partial S_{m}'-\partial S=\partial W_m$ and 
	\[
		\mass(W_{m})\leq c\fn(\partial S_{m}'-\partial S)\leq
		c\fn(S_{m}'-S).
	\]
	Since $G$ is a discrete abelian group, we set $a:=\inf\{\left\vert
	g\right\vert: g\in G\setminus \{0\}\}>0$, then by lemma~\ref{le:MS}, we have 
	\[
		\mass(W_{m})\geq a\size(W_{m}),
	\]
	thus $\size(W_{m})\to 0$. We now put $S_{m}=S_{m}'-W_{m}$. Then $\{S_{m}\}_{m}$
	is a sequence of chains in $\mathscr{S}_{d}(\mathbb{R}^n;G)$ such
	that $\partial(S_m)=T$ and	$\fn(S_{m}-S)\to 0$, thus
	\begin{align*}
		\size(S)&\leq \liminf_{m\to\infty}\size(S_{m})=
		\liminf_{m\to\infty}\size(S_{m}'-W_{m})\\
		&\leq	\liminf_{m\to\infty}(\size(S_{m}')+\size(W_{m}))
		=\liminf_{m\to\infty}\size(S_{m}') =\size(S),
	\end{align*}
	so we have
	\[
		S_{m}\ora{\fn} S,\ \partial S_{m}=\partial S\text{ and
		}\size(S)=\lim_{m\to\infty}\size(S_{m}).
	\]
	And we get that 
	\[
		\inf\left\{	\size(S):\partial S=T,S\in \mathscr{F}_{d}(\mathbb{R}^{n};G)\right\}
		=\inf\left\{	\size(S):\partial S=T, S\in \mathscr{S}_{d}(\mathbb{R}^{n};G)
		\right\}.
	\]
\end{proof}

\begin{lemma}
	\label{le:sizediff}
	 Suppose that $\mathscr{S}\in
	\{\LC,\NC,\RC,\FC\}$ and $T,T'\in \mathscr{S}_{d-1}(\mathbb{R}^n;G)$. If 
	$R\in \mathscr{S}_d(\mathbb{R}^n;G)$ satisfying that $T-T'=\partial R$, 
	then we have that 
	\[
		|\inf\{\size(S):S\in \mathscr{S}_d(\mathbb{R}^n,G),\partial
		S=T\}-\inf\{\size(S):S\in \mathscr{S}_d(\mathbb{R}^n,G),\partial S=T'\}|\leq \size(R).
	\]
\end{lemma}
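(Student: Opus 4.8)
The plan is to exploit the subadditivity of the flat size together with the fact that, for each $\mathscr{S}\in\{\LC,\NC,\RC,\FC\}$, the set $\mathscr{S}_d(\mathbb{R}^n;G)$ is a group: if $S\in\mathscr{S}_d(\mathbb{R}^n;G)$ satisfies $\partial S=T'$, then $S+R\in\mathscr{S}_d(\mathbb{R}^n;G)$ and $\partial(S+R)=T'+(T-T')=T$, so $S+R$ is an admissible competitor for the first infimum. Write
\[
	a=\inf\{\size(S):S\in\mathscr{S}_d(\mathbb{R}^n;G),\ \partial S=T\},\quad
	a'=\inf\{\size(S):S\in\mathscr{S}_d(\mathbb{R}^n;G),\ \partial S=T'\}.
\]
Since $T-T'=\partial R$ with $R\in\mathscr{S}_d(\mathbb{R}^n;G)$, the two admissible classes are simultaneously empty or nonempty: if $\partial S=T$ then $\partial(S-R)=T'$, and conversely. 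In the empty case there is nothing to prove, so we assume both are nonempty.

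First I record that $\size$ is subadditive on $\FC_d(\mathbb{R}^n;G)$, that is, $\size(A+B)\le\size(A)+\size(B)$. For polyhedral chains $P,Q\in\PC_d(\mathbb{R}^n;G)$ this is immediate from the definition: concatenating representations $P=\sum_i g_i\sigma_i$ and $Q=\sum_j h_j\tau_j$ exhibits $P+Q$ as a polyhedral chain carried by $\bigcup_i\sigma_i\cup\bigcup_j\tau_j$, whence $\size(P+Q)\le\sum_i\HM^d(\sigma_i)+\sum_j\HM^d(\tau_j)$, and passing to the infima gives $\size(P+Q)\le\size(P)+\size(Q)$. For general $A,B\in\FC_d(\mathbb{R}^n;G)$, choose $P_m\ora{\fn}A$ with $\size(P_m)\to\size(A)$ and $Q_m\ora{\fn}B$ with $\size(Q_m)\to\size(B)$ (possible after passing to subsequences of sequences realizing the respective infima and re-indexing by a common parameter). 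Then $P_m+Q_m\ora{\fn}A+B$ and $\size(P_m+Q_m)\le\size(P_m)+\size(Q_m)$, so $\size(A+B)\le\liminf_m\size(P_m+Q_m)\le\size(A)+\size(B)$. I also note $\size(-R)=\size(R)$, since $R=\sum_i g_i\sigma_i$ if and only if $-R=\sum_i(-g_i)\sigma_i$, and the same passage to flat chains applies.

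Now the main estimate. For any $S\in\mathscr{S}_d(\mathbb{R}^n;G)$ with $\partial S=T'$, the chain $S+R$ lies in $\mathscr{S}_d(\mathbb{R}^n;G)$ and has boundary $T$, so
\[
	a\le\size(S+R)\le\size(S)+\size(R).
\]
Taking the infimum over all such $S$ yields $a\le a'+\size(R)$. Exchanging the roles of $T$ and $T'$ and using $T'-T=\partial(-R)$ with $-R\in\mathscr{S}_d(\mathbb{R}^n;G)$ and $\size(-R)=\size(R)$, we obtain $a'\le a+\size(R)$. Combining the two inequalities gives $|a-a'|\le\size(R)$, which is the claim.

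The lemma is thus essentially formal; the only points requiring a word of justification are that $\mathscr{S}_d(\mathbb{R}^n;G)$ is closed under addition in each of the four cases (true by the construction of $\LC$, $\NC$, $\RC$, $\FC$) and the passage to subsequences in the proof of subadditivity of the flat size.
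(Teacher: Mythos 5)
Your proof is correct and takes essentially the same route as the paper: both arguments use that adding $\pm R$ converts a competitor for one boundary into a competitor for the other and then invoke $\size(S\pm R)\leq\size(S)+\size(R)$. The paper phrases this via minimizing sequences and leaves the subadditivity of the flat size implicit, whereas you verify it by polyhedral approximation, which is a harmless (and welcome) extra detail.
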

\begin{proof}
	For convenience, we put 
	\[
		\alpha=\inf\{\size(S):S\in \mathscr{S}_d
		(\mathbb{R}^n,G),\partial S=T\},\ 
		\alpha'=\inf\{\size(S):S\in
		\mathscr{S}_d(\mathbb{R}^n,G),\partial S=T'\}.
	\]
	We take two sequences $\{S_m\},\{S_m'\}\subseteq \mathscr{S}_d(\mathbb{R}^n;G)$
	such that 
	\[
		\partial S_m=T,\ \partial S_m'=T',\ \size(S_m)\to \alpha \text{ and }
		\size(S_m')\to \alpha'.
	\] 
	Then $\partial (S_m-R)=T'$ and $\partial
	(S_m'+R)=T$, thus
	\[
		\alpha \leq \size(S_m'+R)\leq \size(S_m')+\size(R),\ \alpha'\leq \size(S_m-R)\leq 
		\size(S_m)+\size(R),
	\]
	and 
	\[
		\alpha\leq \alpha'+\size(R), \ \alpha'\leq \alpha+\size(R).
	\]
\end{proof}
\begin{lemma}\label{le:fap}
	Let $A\in \NC_d(\mathbb{R}^n;G)$ be any normal chain with compact
	support. Suppose that $\HM^{d+1}(\spt A)=0$. Then for any $\varepsilon>0$, 
	we can find a sequence of polyhedral chains
	$\{P_m\}\subseteq \PC_d(\mathbb{R}^n;G)$, a sequence of normal chains
	$\{R_m\}\subseteq \NC_d(\mathbb{R}^n;G)$ and a constant $C=C(n,d)>0$ such that
	, 
	\begin{equation}\label{eq:fap1}
		P_m\ora{\fn} A,\ \size(P_m)\to \size(A),
	\end{equation}
\begin{equation}\label{eq:fap2}
		\partial(P_m+R_m)=\partial A,\ \spt(R_m)\cup\spt (\partial P_m)
		\subseteq \spt \partial A+\oball(0,\varepsilon),
	\end{equation}
	and for any compact set $K\subseteq \mathbb{R}^n$ ,
	\begin{equation}\label{eq:fap3}
		\limsup_{m\to \infty}\HM^d((\spt P_m)\cap K)\leq C\size(A\mr
		(K+\oball(0,\varepsilon)).
	\end{equation}
	Moreover, if $A\in \LC_d(\mathbb{R}^n;G)$, then $R_m\in
	\LC_d(\mathbb{R}^n;G)$.
\end{lemma}
\begin{proof}
	Take Borel set $X\subseteq \spt A$ such that $A\mr X =A$ and
	$\HM^d(X)=\size(A)$. Assume that $\spt A\subseteq \cball(0,r)$. Applying
	Theorem \ref{thm:pca} with $S=A$ and $K=\cball(0,r)$, for any $\delta>0$, 
	there is a Lipschitz mapping $h:[0,1]\times \mathbb{R}^n\to \mathbb{R}^n$ 
	such that $\|h(t,\cdot)-\id\|_{\infty}\leq \delta$, $h(1,\cdot)_{\sharp}
	A\in \PC_d(\mathbb{R}^n;G)$,
	\[
		A= h(1,\cdot)_{\sharp} A- h_{\sharp}([0,1]\times \partial A)-
		\partial h_{\sharp}([0,1]\times A),
	\]
	and 
	\[
		\size(h(1,\cdot)_{\sharp} A)\leq \size(A)+\delta.
	\]
	For $\delta=1/2^m$, we take $P_m=h(1,\cdot)_{\sharp} A$ and $R_m=-h_{\sharp}
	([0,1]\times \partial A)$, then
	\eqref{eq:fap1}, \eqref{eq:fap2} and \eqref{eq:fap3} hold.

\end{proof}

\begin{proposition}
	\label{prop:flatsize}
	Let $B_0\subseteq \mathbb{R}^{n}$ be a compact Lipschitz neighborhood retract 
	with $\HM^d(B_0)=0$.	For any $\mathscr{S}\in \{\LC,\NC,\RC,\FC\}$ 
	and $T\in \mathscr{S}_{d-1}(B_0;G)$ with $\partial T=0$, we have that
	\begin{equation}\label{eq:fs1}
		\inf\{\size(S): \partial S=T,S\in\mathscr{S}_{d}(\mathbb{R}^{n};G)\}
		=\inf\{\HM^{d}(E): E\in \mathscr{C}_{\mathscr{S}}(B_0,G,T)\}.
	\end{equation}
\end{proposition}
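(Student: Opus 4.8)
Write $\alpha$ and $\beta$ for the left- and right-hand sides of \eqref{eq:fs1}. The plan is to prove $\alpha\le\beta$ (elementary) and $\alpha\ge\beta$ (the substance) separately. For $\alpha\le\beta$ I would just unwind the definition of $\mathscr{C}_{\mathscr{S}}(B_0,T,G)$: if $E$ lies in this class then $B_0\subseteq E$ is compact and the image of $[T]$ under $\Hom_{d-1}^{\mathscr{S}}(i_{B_0,E})$ vanishes, which by \eqref{eq:hg} produces $S\in\mathscr{S}_d(\mathbb{R}^n;G)$ with $\partial S=T$ and $\spt S\subseteq E$. Since $S\mr(\mathbb{R}^n\setminus\spt S)=0$ and $E$ is Borel, $S\mr E=S$, so $\size(S)\le\HM^d(E)$ and hence $\alpha\le\HM^d(E)$; taking the infimum over $E$ gives $\alpha\le\beta$.

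For $\alpha\ge\beta$, fix $S$ with $\partial S=T$ and $\size(S)<\infty$, and fix $\varepsilon>0$; I want to build $E\in\mathscr{C}_{\mathscr{S}}(B_0,T,G)$ with $\HM^d(E)\le\size(S)+\varepsilon$. First I would reduce, via Lemma~\ref{le:cnon} (trivial when $\mathscr{S}=\LC$) and Lemma~\ref{le:LF}, to the case $\mathscr{S}=\RC$ with $\spt S$ compact, so that $S$ is a rectifiable $G$-chain of finite mass with $d$-rectifiable carrier $M$, $\HM^d(M)=\size(S)$ (when $\mathscr{S}=\LC$ one runs the argument in $\RC$ and at the very end replaces the rectifiable filling by a Lipschitz one in a polyhedral neighbourhood of $E$, via Lemma~\ref{le:appl}). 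Next, since the deformation tools I need — Theorem~\ref{thm:pca} and Lemma~\ref{le:fap} — require a carrier with $\HM^{d+1}$-null closure, I would pass to a cleaned-up chain: using that a $d$-rectifiable set of finite measure lies, off an $\HM^d$-null set, in countably many compact $C^1$ $d$-patches, and using inner regularity of the finite measures $\HM^d\mr M$ and $\mass(S)\mr M$, pick finitely many patches to form a compact $X\subseteq M$ with $\HM^{d+1}(\overline X)=0$, $\HM^d(X)\le\size(S)$, with $M\setminus X$ of small $\HM^d$-measure and small $S$-mass, and — after discarding first the homologically inessential "diffuse" part of $S$ far from $B_0$ — with $M\setminus X$ contained in a small neighbourhood of $B_0$. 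Setting $S_1:=S\mr X$, Theorem~\ref{thm:pca} applied to $S_1$ yields a polyhedral chain $P:=\varphi_\sharp S_1\in\PC_d(\mathbb{R}^n;G)$ supported on a $d$-skeleton with $\HM^d(\spt P)=\size(P)\le\size(S)+\varepsilon$, together with $\varphi_\sharp S_1-S_1=\partial h_\sharp(I\times S_1)+h_\sharp(I\times\partial S_1)$; hence $\sigma:=T-\partial P$ is a $(d-1)$-cycle with $\fn(\sigma)$ small (as $\partial S_1-\partial P=-\partial h_\sharp(I\times\partial S_1)$ and $h$ moves points only slightly) and with $\spt\sigma$ in a small neighbourhood of $B_0$.

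The heart of the argument, and the step I expect to be the main obstacle, is disposing of $\sigma$ without spoiling the measure estimate, i.e.\ turning "$\sigma$ is a small flat cycle near $B_0$" into "$\sigma$ bounds a chain carried by a low-dimensional set of negligible $\HM^d$-measure"; this is exactly where $B_0$ being a Lipschitz neighbourhood retract with $\HM^d(B_0)=0$ is indispensable. With $\rho:U_0\to B_0$ the retraction, $\rho_\sharp\sigma\in\RC_{d-1}(B_0;G)$ is a cycle whose flat norm is below the threshold of Lemma~\ref{le:fnsm} for $B_0$; since $\HM^d(B_0)=0$ gives $\RC_d(B_0;G)=\{0\}$, that lemma forces $\rho_\sharp\sigma=0$, and the affine homotopy $H$ from $\id$ to a Lipschitz extension of $\rho$ then gives $\sigma=\partial C$ with $C:=H_\sharp(I\times\sigma)$ rectifiable, compactly supported near $B_0$, and crucially with $\HM^{d+1}(\spt C)=0$, because $\HM^d(B_0)=0$ forces $\HM^{d+1}(I\times B_0)=0$ and $H$ is Lipschitz. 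Thus $C$ is again admissible for Theorem~\ref{thm:pca}, and I would iterate the previous paragraph on $C$ with an error budget whose terms sum to less than $\varepsilon$: at each stage the new boundary discrepancy is again a small flat cycle near $B_0$, killed in the same way, and the cleaned carriers added along the way contribute a total $\HM^d$-measure at most $\varepsilon$. This produces $\widehat S$ with $\partial\widehat S=T$ whose support is a finite union of $d$-cells together with $B_0$ and with $\HM^d(\spt\widehat S\setminus B_0)\le\size(S)+2\varepsilon$. Taking $E:=\spt\widehat S\cup B_0$ — compact, containing $B_0$, carrying $\widehat S$, hence in $\mathscr{C}_{\mathscr{S}}(B_0,T,G)$ — gives $\HM^d(E)\le\size(S)+2\varepsilon$; letting $\varepsilon\to0$ and then taking the infimum over $S$ yields $\beta\le\alpha$.
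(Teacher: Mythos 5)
Your easy direction is fine, but the hard direction has two concrete gaps, both located exactly at the step you yourself flag as the heart of the matter. First, the localization of the error cycle: after you replace $S$ by $S_1=S\mr X$ with $X$ a finite union of $C^1$ patches, the boundary changes by $\partial\bigl(S\mr(M\setminus X)\bigr)$, and the set $M\setminus X$ is only small in measure/mass --- it is scattered over all of $\spt S$, not contained in a neighbourhood of $B_0$. Hence $\sigma=T-\partial P$ is small in flat norm but its \emph{support} is not near $B_0$; "discarding the homologically inessential diffuse part of $S$ far from $B_0$" is not a well-defined operation (removing a piece of $S$ away from $B_0$ is precisely what creates boundary there), so you cannot arrange $\spt\sigma\subseteq U_0$. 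Without that, $\rho_\sharp\sigma$ and the homotopy $H$ to the extended retraction are unavailable, and Lemma~\ref{le:fnsm} (which needs a cycle \emph{in} $B_0$, not merely one of small flat norm) cannot be invoked; also your claim $\HM^{d+1}(\spt C)=0$ "because $\HM^{d+1}(I\times B_0)=0$" presupposes $\spt\sigma\subseteq B_0$, which is exactly what fails. Second, even granting $\rho_\sharp\sigma=0$, the filling $C=H_\sharp(I\times\sigma)$ is carried by the homotopy track $H(I\times\spt\sigma)$, and membership of your final set $E=\spt\widehat S\cup B_0$ in $\mathscr{C}_{\mathscr{S}}(B_0,T,G)$ forces you to pay $\HM^d$ of \emph{supports}, not of size carriers; neither $\fn(\sigma)$ nor $\size(C)$ controls $\HM^d(\spt C)$, so the "error budget summing to $\varepsilon$" is not justified. (A smaller issue: your reduction to $\mathscr{S}=\RC$ via Lemma~\ref{le:LF} also changes the right-hand side of \eqref{eq:fs1}, since $\mathscr{C}_{\mathscr{S}}$ depends on $\mathscr{S}$, and for $\mathscr{S}=\FC$ the boundary datum may have infinite mass, a case Lemma~\ref{le:LF} does not cover.)

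The paper's proof avoids both difficulties by a different organization. It first treats $\mathscr{S}=\LC$: for a Lipschitz filling $S$ one has $\HM^{d+1}(\spt S)=0$ automatically, so Lemma~\ref{le:fap} applies directly and gives polyhedral $P_m\ora{\fn}S$ with $\size(P_m)\to\size(S)$, with the boundary discrepancy supported in $\spt\partial S+\oball(0,\varepsilon)\subseteq B_0+\oball(0,\varepsilon)$ (so no localization problem arises), and with the local estimate \eqref{eq:fap2} near $B_0$. The error chain is not killed at all: its support is simply thrown into the competitor set, which is then collapsed by the collar map $\psi_\varepsilon$ (identity away from $B_0$, the retraction $\rho$ near $B_0$), and the measure estimate uses only $\spt P_m$ together with \eqref{eq:fap2} and $\HM^d(B_0)=0$ to make the contribution of the collar vanish as $\varepsilon\to0$. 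The general case $\mathscr{S}\in\{\NC,\RC,\FC\}$ is then reduced to this one not by modifying the filling $S$ but by approximating the boundary datum: Lemma~\ref{le:appl} gives Lipschitz cycles $T_m$ in $B_0$ with $\fn(T-T_m)\to0$, Lemmas~\ref{le:fnsm} and~\ref{le:MS} give $T-T_m=\partial R_m$ with $R_m$ in $B_0$ and $\size(R_m)\to0$ (so the spanning classes for $T$ and $T_m$ coincide and the filling infima differ by $\size(R_m)$, Lemma~\ref{le:sizediff}), and Lemma~\ref{le:LF} transfers the filling infimum for $T_m$ to Lipschitz fillings. If you want to salvage your construction you would need, at minimum, a version of the cleanup step that keeps the boundary error supported in a prescribed neighbourhood of $B_0$ and a support (not size) bound for the chains that fill it; the paper's Lemma~\ref{le:fap} plus the collar map is precisely the device that supplies both.
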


\begin{proof}
	For any $E\in \mathscr{C}_{\mathscr{S}}(B_0,G,T)$, there exists $S\in
	\mathscr{S}_{d}(E;G)$ such that $\partial S=T$, thus 
	\[
		\size(S)\leq \HM^{d}(\spt(S))\leq\HM^{d}(E),
	\]
	and 
	\[
		\inf\left\{	\size(S): \partial S=T,S\in
		\mathscr{S}_{d}(\mathbb{R}^{n};G)\right\}\leq \inf\{\HM^{d}(E): E\in
		\mathscr{C}_{\mathscr{S}}(B_0,G,T)\}.
	\]

	We now turn to prove the reverse inequality, that is,  
	for any $S\in \mathscr{S}_{d}(\mathbb{R}^{n};G)$ with
	$\partial S=T$, 
	\begin{equation}\label{eq:fs50}
		\size(S)\geq\inf\{\HM^{d}(E): E\in \mathscr{C}_{\mathscr{S}}(B_0,G,T)\}.
	\end{equation}

	Since $B_0$ is a compact Lipschitz neighborhood retract, there is an open set
	$U\supseteq B_0$ and a Lipschitz mapping $\rho:U\to B_0$ such that
	$\rho\vert_{B_0}=\id_{B_0}$. For any $\varepsilon>0$ with $B_0+\oball(0,10
	\varepsilon)\subseteq U$, we let
	$\psi_{\varepsilon}:\mathbb{R}^n\to \mathbb{R}^n$ be a Lipschitz extension
	of mapping $\varphi_{\varepsilon}$ defined by 
	\[
		\varphi_{\varepsilon}(x)=
		\begin{cases}
			x,&x\in \mathbb{R}^n\setminus B_0+\oball(0,2 \varepsilon),\\
			\rho(x),&x\in B_0+\cball(0,\varepsilon).
		\end{cases}
	\]
	Since $\Lip(\varphi_{\varepsilon})\leq 2+\Lip(\rho)$, we can assume 
	$\Lip(\psi_{\varepsilon})=\Lip(\varphi_{\varepsilon})\leq 2+\Lip(\rho)$.

	If$\mathscr{S}=\LC$, $T\in \LC_{d-1}(\mathbb{R}^n;G)$, $\spt(T)\subseteq
	B_0$, $\partial T=0$. For any $S\in \LC_d(\mathbb{R}^n;G)$ with $\partial
	S=T$, by Lemma \ref{le:fap}, we can find a sequence $\{P_m\}\subseteq
	\PC_d(\mathbb{R}^n;G)$ and a sequence $\{R_m\}\subseteq \LC_d(\mathbb{R}^n;G)$
	such that $\partial (P_m+R_m)=T$, $\spt(\partial P_m)\cup \spt (R_m)
	\subseteq B_0+\oball(0,\varepsilon)$, $P_m\to S$, $\size(P_m)\to\size(S)$,
	and for any compact set $K$, 
	\[
		\limsup_{m\to \infty}\HM^d(\spt(P_m)\cap K)\leq C \size(S\mr (K+\oball(0,
		\varepsilon))).
	\]

	We put $E_m=B_0\cup \spt (P_m) \cup \spt (R_m)$. Since $\partial
	(P_m+R_m)= T$, we get that $E_m$ spans $T$, i.e. 
	$E_m\in \mathscr{C}_{\LC}(B_0,G,T)$. Thus
	$\psi_{\varepsilon}(E_m)\in \mathscr{C}_{\LC}(B_0,G,T)$. Since
	$\spt(R_m)\subseteq B_0+\oball(0,\varepsilon)$, we get that
	$\psi_{\varepsilon}(\spt R_m)\subseteq B_0$ and
	\[
		\psi_{\varepsilon}(E_m)=B_0\cup \psi_{\varepsilon}(\spt P_m).
	\]
	Setting $U_{\varepsilon}=B_0+\oball(0,2 \varepsilon)$, we get that
	$\psi_{\varepsilon}(\spt(P_m)\setminus U_{\varepsilon})=\spt(P_m)\setminus
	U_{\varepsilon}$ and 
	\[
		\psi_{\varepsilon}(E_m)= B_0\cup \psi_{\varepsilon}(\spt(P_m)\cap
		U_{\varepsilon})\cup (\spt(P_m)\setminus U_{\varepsilon}).
	\]
	Thus 
	\[
		\begin{aligned}
			\HM^d(\psi_{\varepsilon}(E_m))&\leq \HM^d(\spt(P_m)\setminus
			U_{\varepsilon})+\HM^d(\psi_{\varepsilon}(\spt(P_m)\cap
			U_{\varepsilon}))\\
			&\leq
			\HM^d(\spt P_m)+\Lip(\psi_{\varepsilon})^d\HM^d(\spt(P_m)\cap
			U_{\varepsilon})
		\end{aligned}
	\]
	By Lemma \ref{le:fap}, we get that 
	\[
		\limsup_{m\to\infty}\HM^d(\spt(P_m)\cap U_{\varepsilon})\leq
		C \size(S \mr (B_0+\oball(0,3 \varepsilon)).
	\]
	and 
	\[
		\limsup_{m\to \infty}\HM^d(\psi_{\varepsilon}(E_m))\leq
		\size(S)+C\cdot (2+\Lip(\rho))^{d}\size(S\mr (B_0+\oball(0,3\varepsilon))).
	\]
	Let $\nu$ be the Radon measure given by $\nu(U)=\size(S\mr U)$. Then we see
	that $\nu(B_0)=0$. Since $\HM^d(B_0)=0$, we get that 
	\[
		\limsup_{m\to \infty}\HM^d(\psi_{\varepsilon}(E_m))\leq
		\size(S)+C'\lim_{\varepsilon\to 0}
		\nu(B_0+\cball(0,\varepsilon))=\size(S)+C'\nu(B_0)=\size(S),
	\]
	and 
	\[
		\inf\{\HM^d(E):E\in \mathscr{C}_{\LC}(B_0,G,T)\}\leq \size(S).
	\]
	Thus the euqality \eqref{eq:fs1} holds for $\mathscr{S}=\LC$.

	If $\mathscr{S}\in \{\NC,\RC,\FC\}$, by Lemma \ref{le:appl}, we can find
	$\{T_m\}\subseteq \LC_{d-1}(B_0;G)$ such that $\partial T_m=0$ and
	$\fn(T-T_m)\to 0$. By Lemma \ref{le:fnsm}, we can find $\{R_m\}\subseteq
	\mathscr{S}_d(B_0;G)$ such that $T-T_m=\partial R_m$ and $\mass(R_m)\leq
	c\fn(T- T_m)\to 0$. By Lemma \ref{le:MS}, we get that $\size(R_m)\to 0$.
	By Lemma \ref{le:LF}, we get that 
	\[
		\begin{aligned}
			\inf\{\size(S):\partial S=T_m, S\in \mathscr{S}_d(\mathbb{R}^n;G)\}
			&=\inf\{\size(S):\partial S=T_m, S\in \LC_d(\mathbb{R}^n;G)\}\\
			&\geq \inf\{\HM^d(E):E\in \mathscr{C}_{\LC}(B_0,T_m,G)\}\\
			&\geq \inf\{\HM^d(E):E\in \mathscr{C}_{\mathscr{S}}(B_0,T_m,G)\}\\
			&=\inf\{\HM^d(E):E\in \mathscr{C}_{\mathscr{S}}(B_0,G,T)\}.
		\end{aligned}
	\]
	By Lemma \ref{le:sizediff}, we have that 
	\[
		\inf\{\size(S):\partial S=T_m, S\in \mathscr{S}_d(\mathbb{R}^n;G)\}\leq
		\inf\{\size(S):\partial S=T, S\in
		\mathscr{S}_d(\mathbb{R}^n;G)\}+\size(R_m).
	\]
	We get that 
	\[
		\inf\{\size(S):\partial S=T, S\in \mathscr{S}_d(\mathbb{R}^n;G)\}
		\geq \inf\{\HM^d(E):E\in \mathscr{C}_{\mathscr{S}}(B_0,G,T)\}.
	\]

\end{proof}
\begin{proposition}
	\label{prop:fssb}
	Let $B_0\subseteq \mathbb{R}^{n}$ be a compact Lipschitz neighborhood retract.
	Then for any $\sigma\in \Hom_{d-1}^{\NC}(B_0;G)$, we have that 
	\begin{equation}\label{eq:fssb}
		\begin{aligned}
			&\inf\{\size(S): S\in\mathcal{Z}_{d}^{\NC}(\mathbb{R}^{n},B_0;G),
			[\partial S]=\sigma\}\\
			&=\inf\{\size(S \mr (\mathbb{R}^n\setminus B_0)): S\in\mathcal{Z}_{d}^{\NC}(\mathbb{R}^{n},B_0;G),
			[\partial S]=\sigma\}\\
			&=\inf\{\HM^{d}(E\setminus B_0): E\in \mathscr{C}_{\NC}(B_0,G,\sigma)\}.
		\end{aligned}
	\end{equation}
\end{proposition}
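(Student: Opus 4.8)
Write $I_1,I_2,I_3$ for the three infima of \eqref{eq:fssb}, in order; since every $S$ in sight is normal and $G$ is discrete, Lemma~\ref{le:MS} makes all the sizes finite, and $I_2\le I_1$ is clear from $\size(S\mr(\mathbb R^n\setminus B_0))\le\size(S)$. The plan is to prove $I_1\le I_2$, $I_1\le I_3$ and $I_3\le I_2$, so that $I_1=I_2=I_3$. Everything rests on one \emph{slicing device}. Fix the Lipschitz retraction $\rho:U\to B_0$ with $B_0+\oball(0,\delta)\subseteq U$; shrinking $\delta$ we may assume the straight–line homotopy $H(t,x)=(1-t)x+t\rho(x)$ is a Lipschitz deformation retraction of $B_0+\oball(0,\delta)$ onto $B_0$. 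Put $g=\dist(\cdot,B_0)$. For $A\in\NC_d(\mathbb R^n;G)$ with $\spt\partial A\subseteq B_0$ (after a radial retraction we may take $\spt A$ bounded), a.e. small $s$ is such that $A\mr\{g\ge s\}$ is normal with $\partial(A\mr\{g\ge s\})=-\langle A,g,s\rangle$ supported on $\{g=s\}$; setting
\[
A_s:=A\mr\{g\ge s\}-H_\sharp([0,1]\times\langle A,g,s\rangle),
\]
the homotopy formula gives $\partial A_s=-\rho_\sharp\langle A,g,s\rangle$, supported in $B_0$, while $\partial A+\rho_\sharp\langle A,g,s\rangle=\partial\big(\rho_\sharp(A\mr\{g<s\})\big)$ with $\rho_\sharp(A\mr\{g<s\})\in\NC_d(B_0;G)$, so $[\partial A_s]=[\partial A]$ in $\Hom_{d-1}^{\NC}(B_0;G)$. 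Also $\size(A\mr\{g\ge s\})\le\size(A\mr(\mathbb R^n\setminus B_0))$, and by the coarea bound of Lemma~\ref{le:slicing}, $\int_0^{\delta}\mass(\langle A,g,s\rangle)\,ds\le c\,\mass(A)<\infty$, so there are $s_k\downarrow0$ with $s_k\mass(\langle A,g,s_k\rangle)\to0$, whence $\size\big(H_\sharp([0,1]\times\langle A,g,s_k\rangle)\big)\le c\,s_k\mass(\langle A,g,s_k\rangle)\to0$ by Lemma~\ref{le:MS}. Thus $A_{s_k}\in\mathcal Z_d^{\NC}(\mathbb R^n,B_0;G)$, $[\partial A_{s_k}]=[\partial A]$, $\size(A_{s_k})\le\size(A\mr(\mathbb R^n\setminus B_0))+o(1)$ and $\size(A_{s_k}\mr B_0)=o(1)$.

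Applying the device with $A=S$ to a competitor $S$ of $I_2$ gives relative cycles $A_{s_k}$ with $[\partial A_{s_k}]=\sigma$ and $\size(A_{s_k})\to\le\size(S\mr(\mathbb R^n\setminus B_0))$, so $I_1\le I_2$ (hence $I_1=I_2$). For $I_1\le I_3$, take $E\in\mathscr C_{\NC}(B_0,\sigma,G)$ with $\HM^d(E\setminus B_0)<\infty$, choose a cycle $\tau$ representing $\sigma$ in $B_0$, and pick $T\in\NC_d(E;G)$ with $\partial T=\tau$; then $\spt\partial T\subseteq B_0$ and $T\mr E=T$ as $E$ is closed, so $\size(T\mr\{g\ge s_k\})\le\HM^d(E\setminus B_0)$, and the device yields $A_{s_k}$ with $[\partial A_{s_k}]=\sigma$ and $\size(A_{s_k})\le\HM^d(E\setminus B_0)+o(1)$, giving $I_1\le I_3$.

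For $I_3\le I_2$, let $S$ be a competitor of $I_2$ and let $S':=S_{s_k}$, so $\partial S'=\tau'\in\NC_{d-1}(B_0;G)$, $[\tau']=\sigma$, $\size(S')\le\size(S\mr(\mathbb R^n\setminus B_0))+o(1)$ and $\size(S'\mr B_0)=o(1)$. I would now mirror the proof of Proposition~\ref{prop:flatsize}, tracking $\HM^d(\cdot\setminus B_0)$ in place of $\HM^d(\cdot)$. First reduce $S'$ to a Lipschitz representative: by Lemma~\ref{le:appl} approximate $\tau'$ by Lipschitz cycles $\tau'_m\in\LC_{d-1}(B_0;G)$ with $\tau'-\tau'_m=\partial\beta_m$, $\beta_m\in\NC_d(B_0;G)$, $\mass(\beta_m)\to0$ (Lemma~\ref{le:fnsm}), hence $\size(\beta_m)\to0$ (Lemma~\ref{le:MS}) and $[\tau'_m]=\sigma$; by Lemmas~\ref{le:sizediff} and~\ref{le:LF} there are $R_m\in\LC_d(\mathbb R^n;G)$ with $\partial R_m=\tau'_m$ and $\size(R_m)\le\size(S')+\size(\beta_m)+1/m$. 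Since $R_m$ is a Lipschitz chain, $\HM^{d+1}(\spt R_m)=0$, so Lemma~\ref{le:fap} applies and produces polyhedral $P_{m,j}\ora{\fn}R_m$ with $\size(P_{m,j})\to\size(R_m)$, $\spt(\tau'_m-\partial P_{m,j})\subseteq B_0+\oball(0,\varepsilon)$, and $\limsup_j\HM^d(\spt P_{m,j}\cap K)\le C\,\size(R_m\mr(K+\oball(0,\varepsilon)))$ for compact $K$. The sets $E_{m,j}=B_0\cup\spt P_{m,j}\cup\spt(R_m-P_{m,j})$ span $\tau'_m$, hence $\sigma$, and pushing by the map $\psi_\varepsilon$ ($=\rho$ on $B_0+\cball(0,\varepsilon)$, $=\id$ off $B_0+\oball(0,2\varepsilon)$) keeps them spanning while, exactly as in Proposition~\ref{prop:flatsize}, collapsing them onto $B_0\cup\psi_\varepsilon(\spt P_{m,j})$, so that
\[
\HM^d\big(\psi_\varepsilon(E_{m,j})\setminus B_0\big)\le\HM^d(\spt P_{m,j})+\Lip(\psi_\varepsilon)^d\,\HM^d\big(\spt P_{m,j}\cap(B_0+\cball(0,2\varepsilon))\big).
\]
Letting $j\to\infty$ the first term tends to $\size(R_m)$ and the second is $\le\Lip(\psi_\varepsilon)^dC\,\size\big(R_m\mr(B_0+\cball(0,3\varepsilon))\big)$; letting $\varepsilon\to0$ the finite measure $A\mapsto\size(R_m\mr A)$ gives $\size(R_m\mr(B_0+\cball(0,3\varepsilon)))\downarrow\size(R_m\mr B_0)$; and letting $m\to\infty$ then $k\to\infty$, $\size(R_m)\to\le\size(S\mr(\mathbb R^n\setminus B_0))$ while $\size(R_m\mr B_0)\to\size(S'\mr B_0)=o(1)\to0$. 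A diagonal choice then exhibits sets in $\mathscr C_{\NC}(B_0,\sigma,G)$ whose $\HM^d(\cdot\setminus B_0)$ is arbitrarily close to $\size(S\mr(\mathbb R^n\setminus B_0))$, so $I_3\le I_2$.

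The delicate points are exactly those already present in Proposition~\ref{prop:flatsize} — that after pushing by $\psi_\varepsilon$ the competitor set collapses onto $B_0\cup\psi_\varepsilon(\spt P_{m,j})$ up to a controlled error supported near $B_0$, and that the several limits ($k,m,j,\varepsilon$) can be carried out diagonally. The one genuinely new ingredient over Proposition~\ref{prop:flatsize} is that its hypothesis $\HM^d(B_0)=0$, used there only to force the error term $\size(\cdot\mr B_0)$ to vanish, is replaced here by the slicing estimate $\size(S'\mr B_0)=o(1)$; the case $d=1$ is routine.
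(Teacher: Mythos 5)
Your slicing device is sound: the computation $\partial A_s=-\rho_\sharp\langle A,g,s\rangle$, the homology identity via $\rho_\sharp(A\mr\{g<s\})$, and the choice of $s_k$ with $s_k\mass(\langle A,g,s_k\rangle)\to 0$ give correct proofs of $I_1=I_2$ and $I_1\le I_3$, and this is essentially the paper's own first step (the paper pushes the sliced chain forward by a Lipschitz extension $\rho_\varepsilon$ of $\rho$ instead of coning with the straight-line homotopy). The gap is in your proof of $I_3\le I_2$, at the passage from $S'$ to the Lipschitz fillings $R_m$. Lemma \ref{le:LF} (even combined with Lemma \ref{le:sizediff}) only produces $R_m$ with $\partial R_m=\tau'_m$ and nearly optimal \emph{total} size; it carries no information about where $R_m$ places its size, so your assertion $\size(R_m\mr B_0)\to\size(S'\mr B_0)$ is unsupported: you did not choose $R_m$ as flat approximants of $S'-\beta_m$, and even if you had, flat convergence together with convergence of total size does not localize size to the closed set $B_0$. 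Since the present proposition drops the hypothesis $\HM^d(B_0)=0$ (as you yourself note), a near-minimizing filling may carry a definite amount of size on $B_0$, and then after letting $\varepsilon\to 0$ your estimate leaves the uncontrolled term $(2+\Lip(\rho))^d C\,\size(R_m\mr B_0)$, yielding only $I_3\le I_2+C'\liminf_m\size(R_m\mr B_0)$, not $I_3\le I_2$.

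This is exactly the point the paper's proof of the hard direction is built around: it applies Theorem \ref{thm:pca} \emph{directly} to the normal competitor $S$, so the local bound \ref{pca8} controls $\HM^d\big(\spt(\varphi_\sharp S)\cap\{\varepsilon\le\dist(\cdot,B_0)\le 2\varepsilon\}\big)$ by $c_0\,\size\big(S\mr((B_0+\oball(0,3\varepsilon))\setminus B_0)\big)$, and only this annulus contributes to $\HM^d(\cdot\setminus B_0)$ because $\rho_\varepsilon$ sends the inner collar $\{\dist(\cdot,B_0)\le\varepsilon\}$ into $B_0$; that quantity tends to $0$ as $\varepsilon\to 0$ with no reference to size on $B_0$ itself. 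Your reduction-to-Lipschitz route can be repaired in the same spirit without the false localization: in your final estimate only the annulus $\{\varepsilon\le\dist(\cdot,B_0)\le 2\varepsilon\}$ should carry the Lipschitz factor (the inner collar is mapped into $B_0$ by $\psi_\varepsilon$, hence contributes nothing to $\HM^d(\cdot\setminus B_0)$), and applying \eqref{eq:fap2} with a fattening parameter smaller than $\varepsilon$ bounds that term by $C\,\size\big(R_m\mr\{0<\dist(\cdot,B_0)<3\varepsilon\}\big)$, which for fixed $m$ vanishes as $\varepsilon\to 0$ by continuity of the finite measure $A\mapsto\size(R_m\mr A)$. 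With that correction (and with the Proposition \ref{prop:flatsize}-style treatment of $\spt(R_m-P_{m,j})$, a delicate collapse step you inherit rather than address), your longer route goes through; the paper's direct use of \ref{pca8} and \ref{pca9} avoids the detour and the need for any localization of size for near-minimizers.
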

\begin{proof}
	Put $U=\mathbb{R}^n\setminus B_0$. Let $U_0\supseteq B_0$ be an open set,
	$\rho:U_0\to B_0$ be a Lipschitz retraction. Suppose that
	$B_0+\cball(0,\delta_0)\subseteq U_0$, $\delta_0>0$. Let $f:\mathbb{R}^n
	\to \mathbb{R}$ be the Lipschitz function defined by $f(x)=\dist(x,B_0)$.

	For any $E\in \mathscr{C}_{\NC}(B_0,G,\sigma)$, there exists $S\in
	\NC_{d}(E;G)$ such that $[\partial S]=\sigma$, thus 
	\[
		\size(S\mr U)\leq \HM^{d}(E\setminus B_0),
	\]
	and 
	\[
		\inf\left\{	\size(S\mr U): S\in \mathcal{Z}_{d}^{\NC}
		(\mathbb{R}^{n},B_0;G), [\partial S]=\sigma\right\}\leq 
		\inf\{\HM^{d}(E\setminus B_0): E\in \mathscr{C}_{\NC}(B_0,G,\sigma)\}.
	\]

	For any $0< \varepsilon< \delta_0/2$, we let
	$\rho_{\varepsilon}:\mathbb{R}^n\to \mathbb{R}^n$ be a Lipschitz extension
	of mapping $\rho_{\varepsilon}'$ which is defined by 
	\[
		\rho_{\varepsilon}'(x)=\begin{cases}
			x,&f(x)\geq 2 \varepsilon,\\
			\rho(x),& f(x)\leq \varepsilon.
		\end{cases}
	\]
	Indeed, we can suppose that $\Lip(\rho_{\varepsilon})=
	\Lip(\rho_{\varepsilon}')\leq 2+
	\Lip(\rho)$. For any $R\in \NC_d(\mathbb{R}^n;G)$ with 
	$\spt \partial R\subseteq B_0$ and $[\partial R]=\sigma$, by Theorem 5.7 in
	\cite{Fleming:1966}, we get that for $\HM^1$-a.e. $s\in (0,\varepsilon)$,
	\[
		R\mr\{f>s\}\in \NC_{d}(\mathbb{R}^n;G) \text{ and }
		\spt \partial (R\mr\{f>s\})\subseteq \{f=s\}.
	\]
	Since $\spt (R-R\mr\{f>s\})\subseteq \{f\leq s\}$, we get that 
	\[
		(\rho_{\varepsilon})_{\sharp} R- (\rho_{\varepsilon})_{\sharp} (R \mr
		\{f>s\})\in \NC_{d}(B_0;G).
	\]
	Thus
	\[
		(\rho_{\varepsilon})_{\sharp}( R \mr \{f>s\})\in
		\mathcal{Z}_{d}^{\NC}(\mathbb{R}^{n},B_0;G) \text{ and }
		[\partial (\rho_{\varepsilon})_{\sharp} (R \mr \{f>s\})]=[\partial
		R]=\sigma.
	\]
	We see that
	\[
		\size((\rho_{\varepsilon})_{\sharp}( R \mr \{f>s\}))\leq
		\Lip(\rho_{\varepsilon})^d \size(R\mr \{s\leq f\leq 2 \varepsilon\})+
		\size(R\mr \{f> 2 \varepsilon\}),
	\]
	and 
	\[
		\begin{aligned}
			&\inf\{\size(S):
			S\in\mathcal{Z}_{d}^{\NC}(\mathbb{R}^{n},B_0;G), [\partial
			S]=\sigma\}\\
			&\leq (2+\Lip(\rho))^d\size(R\mr \{s\leq f\leq 2 \varepsilon\})+
			\size(R\mr \{f> 2 \varepsilon\}),
		\end{aligned}
	\]
	let $\varepsilon$ tend to 0, we will get that the first equality in
	\eqref{eq:fssb} holds. Therefore
	\[
		\inf\{\size(S): S\in\mathcal{Z}_{d}^{\NC}(\mathbb{R}^{n},B_0;G), 
		[\partial S]=\sigma\}\leq \inf\{\HM^{d}(E\setminus B_0): E\in
		\mathscr{C}_{\NC}(B_0,G,\sigma)\}.
	\]
	For any $S\in\mathcal{Z}_{d}^{\NC}(\mathbb{R}^{n},B_0;G)$ with
	$[\partial S]=\sigma$, by Theorem \ref{thm:pca}, there is a Lipschitz
	mapping $h$ such that $h(t,x)=x$ for $x\in B_0$, $0\leq t\leq n-d+2$,
	$\varphi_{\sharp}S=S+\partial h_{\sharp}(I\times S)+h_{\sharp}(I\times
	\partial S)$, where $\varphi=h(n-d+2,\cdot)$. Since $\spt \partial S\subseteq
	B_0$, we get that $\partial \varphi_{\sharp}S =\varphi_{\sharp}(\partial S)
	=\partial S$, thus $\spt \varphi_{\sharp}S\in \mathscr{C}_{\NC}(B_0,G,\sigma)$.
	Thus $\rho_{\varepsilon}(\spt \varphi_{\sharp}S)\in
	\mathscr{C}_{\NC}(B_0,G,\sigma)$, and 
	\[
		\begin{aligned}
			\HM^d(\rho_{\varepsilon}(\spt \varphi_{\sharp}S)\setminus B_0)&\leq
			\Lip(\rho_{\varepsilon})^d\HM^d((\spt \varphi_{\sharp}S) \cap B_{2
			\varepsilon}\setminus B_{\varepsilon})+\HM^d((\spt
			\varphi_{\sharp}S)\setminus B_{2 \varepsilon})\\
			&\leq (2+\Lip(\rho))^dc_0\size(S\mr (B_{3 \varepsilon}\setminus
			B_0))+\size(S\mr U)+\varepsilon'.
		\end{aligned}
	\]
	Let $\varepsilon$ tend to 0, we will get that 
	\[
		\inf\{\HM^{d}(E\setminus B_0): E\in
		\mathscr{C}_{\NC}(B_0,G,\sigma)\}\leq \size(S\mr U)+ \varepsilon'.
	\]
	Thus
	\[
		\inf\{\HM^{d}(E\setminus B_0): E\in
		\mathscr{C}_{\NC}(B_0,G,\sigma)\}\leq \inf\{\size(S\mr U): S\in\mathcal{Z}_{d}^{\NC}(\mathbb{R}^{n},B_0;G), 
		[\partial S]=\sigma\}.
	\]

\end{proof}
\section{Currents}
Let $\PC_d(\mathbb{R}^n)$ be the group of integral polyhedral chains in
$\mathbb{R}^n$ defined in \cite[4.1.22]{Federer:1969}. It is clear that 
$(\PC_d(\mathbb{R}^n),\mass)$ and $(\PC_d(\mathbb{R}^n;\mathbb{Z}),\mass)$ are
natural isometrically isomorphic, thus by definition, $(\LC_d(\mathbb{R}^n),\mass)$ 
and $(\LC_d(\mathbb{R}^n;\mathbb{Z}),\mass)$ are natural isometrically
isomorphic. And the natural isometric
isomorphism commute with the boundary operator $\partial$ and any induced
homomorphism $f_{\sharp}$ for Lipschitz mappings $f$.
We let 
\[
	\FC_d^{loc}(\mathbb{R}^n)=\{R+\partial S:R\in \RC_d^{loc}(\mathbb{R}^n),
	S\in \RC_{d+1}^{loc}(\mathbb{R}^n)\},
\]
and call $d$-dimensional locally integral flat chains the elements of
$\FC_d^{loc}(\mathbb{R}^n)$. A $d$-dimensional integral flat chain is a
$d$-dimensional locally integral flat chain with compact support, and denote
by $\FC_d(\mathbb{R}^n)$ the collection of all integral flat chains.
For any $T\in \IC_d^{loc}(\mathbb{R}^n)$, we set 
$
\mathsf{N}(T)=\mass(T)+\mass(\partial T),
$
and for any $T\in \FC_d^{loc}(\mathbb{R}^n)$, we set 
\[
	\fn(T)=\inf\{\mass(R)+\mass(S):T=R+\partial S, R\in \RC_d^{loc}(\mathbb{R}^n),
	S\in \RC_{d+1}^{loc}(\mathbb{R}^n) \}.
\]
\begin{lemma}\label{le:isoR}
	$(\RC_d^{loc}(\mathbb{R}^n),\mass)$ and $(\RC_d(\mathbb{R}^n;\mathbb{Z}),\mass)$ are
	natural  isometrically isomorphic.
\end{lemma}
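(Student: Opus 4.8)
The plan is to construct the isomorphism by extending, through $\mass$-density, the isometry between Lipschitz chains that is already recorded just above Lemma~\ref{le:isoR}, and then to identify the $\mass$-closure of the Lipschitz chains with the space of (locally) rectifiable currents; the latter identification is the only non-formal ingredient. Write $J$ for that natural map: it sends $f_{\sharp}P\in\LC_d(\mathbb{R}^n;\mathbb{Z})$, with $P\in\PC_d(\mathbb{R}^n;\mathbb{Z})$, to $f_{\sharp}P\in\LC_d(\mathbb{R}^n)$ with $P$ now read as an integral polyhedral current. As noted there, $J$ is a group isomorphism onto $\LC_d(\mathbb{R}^n)$, preserves $\mass$, and intertwines $\partial$ and every induced homomorphism $g_{\sharp}$; hence it also preserves $\fn$ (which is manufactured from $\mass$ and $\partial$), and since on a polyhedral chain $\size$ is just the total $\HM^d$-measure of a presentation by non-overlapping faces in either reading, $J$ preserves $\size$ as well.

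Since, by definition, $\RC_d(\mathbb{R}^n;\mathbb{Z})$ is the $\mass$-closure of $\LC_d(\mathbb{R}^n;\mathbb{Z})$, hence $\mass$-complete, one extends $J$ as follows. For $A\in\RC_d(\mathbb{R}^n;\mathbb{Z})$ choose $Q_k=(f_k)_{\sharp}P_k\in\LC_d(\mathbb{R}^n;\mathbb{Z})$ with $\mass(A-Q_k)\to0$; then $J(Q_k)$ is $\mass$-Cauchy in $\LC_d(\mathbb{R}^n)$, its $\mass$-limit is a locally rectifiable current (see below), and we put $\Theta(A)$ equal to this limit. The value is independent of the approximating sequence, because $\mass\big(J(Q_k)-J(Q'_k)\big)=\mass(Q_k-Q'_k)\le\mass(A-Q_k)+\mass(A-Q'_k)\to0$, so $\Theta$ is canonical; it is a group homomorphism since $\mass$-limits respect addition, it satisfies $\mass\circ\Theta=\mass$ (hence is injective) and, by the previous paragraph together with lower semicontinuity of $\mass$ and $\size$ under $\mass$-convergence and Proposition~8.2 of \cite{White:1999:Ann}, also $\size\circ\Theta=\size$ and $\fn\circ\Theta=\fn$; and it commutes with $\partial$ and each $g_{\sharp}$ because $J$ does and these operations are $\fn$-continuous on finite-mass chains. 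Surjectivity of $\Theta$ is again supplied by the identification below.

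It remains to identify the $\mass$-closure of $\LC_d(\mathbb{R}^n)$ with $\RC_d^{loc}(\mathbb{R}^n)$, and this is where I expect the work to lie. In one direction, a $\mass$-Cauchy sequence of Lipschitz integral chains converges in $\fn$ to a current of locally finite mass which, by the closure of integer-multiplicity rectifiable currents under such limits, has the form $(\HM^d\mr M)\wedge\eta$ with $\eta$ integer-valued — this is Federer's structure theory \cite{Federer:1969}, and on the flat-chain side the rectifiability criterion of \cite{White:1999:Ann}. In the other direction, given $T=(\HM^d\mr M)\wedge\eta\in\RC_d^{loc}(\mathbb{R}^n)$, exhaust $\mathbb{R}^n$ by the balls $\cball(0,j)$, slice at $\HM^1$-a.e.\ radius as in Lemma~\ref{le:slicing} so that $T\mr\cball(0,j)$ has compact support and controlled boundary, cover $\HM^d$-almost all of $M$ by countably many Lipschitz images of compact subsets of $\mathbb{R}^d$, and apply the deformation theorem \cite[Deformation theorem]{Fleming:1966} to produce Lipschitz integral chains approximating each $T\mr\cball(0,j)$ in $\mass$; a diagonal choice approximates $T$ in $\mass$ locally, the pieces match on overlaps because the carrying measure is $\HM^d\mr M$, and this yields a preimage $\Theta^{-1}(T)\in\RC_d(\mathbb{R}^n;\mathbb{Z})$ that does not depend on the exhaustion. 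Thus $\Theta$ is the asserted natural isometric isomorphism, and preserves $\mass$ (as well as $\size$ and $\fn$). Everything other than this equivalence between Fleming's notion of a rectifiable flat $\mathbb{Z}$-chain (mass-approximability by Lipschitz images of polyhedral chains) and Federer's integer-multiplicity rectifiable currents, together with the attendant globalization over the exhaustion, is the routine unique extension of an isometry to completions.
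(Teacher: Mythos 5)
Your overall strategy is the same as the paper's: both proofs take the natural mass-preserving identification of Lipschitz chains $\LC_d(\mathbb{R}^n)\cong\LC_d(\mathbb{R}^n;\mathbb{Z})$ and extend it to the mass-completions, checking well-definedness, additivity, isometry, and commutation with $\partial$ and $f_{\sharp}$ exactly as you do. Where you diverge is in the one non-formal ingredient, which you correctly isolate: that the $\mass$-closure of $\LC_d(\mathbb{R}^n)$ is the class of integer-multiplicity rectifiable currents. The paper does not reprove this; it quotes \cite[4.1.28]{Federer:1969}, which is precisely the statement that a current of the form $(\HM^d\mr M)\wedge\eta$ with integer multiplicities can be approximated in mass by Lipschitz pushforwards $f_{\sharp}P$ of integral polyhedral chains (the equivalence with Federer's approximation-style definition of rectifiability), and, on the flat-chain side, Theorem 4.1 of \cite{White:1999:Acta} to conclude that the finite-mass limit chain lies in $\RC_d(\mathbb{R}^n;\mathbb{Z})$.

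The genuine gap is in your replacement for that citation. The deformation theorem cannot "produce Lipschitz integral chains approximating each $T\mr\cball(0,j)$ in $\mass$": it yields a decomposition $T=P+Q+\partial S$ with $\mass(P)\le c\,\mass(T)$ and with $Q,S$ small in mass, i.e.\ $\fn(T-P)$ small, but $\mass(T-P)$ is in general of the same order as $\mass(T)$ (projecting onto a grid moves the carrying set, which is a large perturbation in mass even when it is tiny in flat norm). The correct argument — and what Federer's 4.1.28 encodes — covers $M$ up to an $\HM^d$-null set by countably many Lipschitz ($C^1$) images of compact sets, approximates the integer multiplicity by simple functions, and uses the area formula; no slicing or grid deformation enters. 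A second, smaller point: for $T\in\RC_d^{loc}(\mathbb{R}^n)$ of infinite total mass your exhaustion cannot close, since every element of $\RC_d(\mathbb{R}^n;\mathbb{Z})$ has finite mass, so "approximates $T$ in mass locally" does not yield a $\mass$-Cauchy sequence nor a candidate preimage; the correspondence can only pair the finite-mass chains on the two sides (the paper's proof tacitly makes this restriction by approximating $T$ globally in mass), so you should state and prove the lemma for finite-mass chains rather than attempt the globalization.
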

\begin{proof}
	We denote by $\ell_{\ast}:\LC_{\ast}(\mathbb{R}^n)\to \LC_{\ast}
	(\mathbb{R}^n;\mathbb{Z})$ the natural isometric isomorphism.
	By Theorem \cite[4.1.28]{Federer:1969}, for any $T\in \RC_d^{loc}(\mathbb{R}^n)$, and any
	$\varepsilon>0$, we can find Lipschitz mapping $f:\mathbb{R}^n\to
	\mathbb{R}^n$ and $P\in \PC_d(\mathbb{R}^n)$ such that 
	\[
		\mass(T-f_{\sharp}P)<\varepsilon,
	\]
	thus	we can find a sequence $\{T_m\}\subseteq \LC_d(\mathbb{R}^n)$ such
	that $T_m\ora{\mass} T$. Then $\{\ell_d(T_m)\}\subseteq
	\LC_d(\mathbb{R}^n;\mathbb{Z})$ is a Cauchy sequence for mass, so it is also
	a Cauchy sequence for flat norm, and it converges to a flat chain, saying
	$\ell_d^{\RC}(T)$. Thus 
	\[
		\mass(\ell_d^{\RC}(T))=\lim_{m\to \infty}\mass(T_m)<\infty.
	\]
	Applying Theorem 4.1 in \cite{White:1999:Acta}, we get that
	$\ell_d^{\RC}(T)\in \RC_d(\mathbb{R}^n;\mathbb{Z})$.
	Let us check that $\ell_d^{\RC}(T)$ does not depend on 
	the choice of the sequence $\{T_m\}$. Indeed, if $\{T_m'\}$ is an another
	Lipschitz chains which converges to $T$ in mass, then $\mass(T-T_m')\to 0$.
	Thus
	\[
		\fn(\ell_d(T_m)-\ell_d(T_m'))\leq
		\mass(\ell_d(T_m)-\ell_d(T_m'))=\mass(T_m-T_m')\to 0,
	\]
	we get so that $\{\ell_d(T_m)\}$ and $\{\ell_d(T_m')\}$ converge to a same
	limit. Hence, the mapping $\ell_d^{\RC}:\RC_d^{loc}(\mathbb{R}^n)\to 
	\RC_d(\mathbb{R}^n;\mathbb{Z})$ is well defined.
	We claim that
	\begin{itemize}
		\item $\ell_d^{\RC}\vert_{\LC_d(\mathbb{R}^n)}=\ell_d$. For any $T\in
			\RC_d^{loc}(\mathbb{R}^n)$ and $\{T_m\}\subseteq \LC_d(\mathbb{R}^n)$,
		\item  if $T_m\ora{\fn} T$, then $ \ell_d(T_m)\ora{\fn}\ell_d^{\RC}(T)$;
		\item if $T_m\ora{\mass} T$, then $
			\ell_d(T_m)\ora{\mass}\ell_d^{\RC}(T)$.
		\item $\ell_d^{\RC}$ is an isometric isomorphism.
		\item $\ell_d^{\RC}$ commute with $\partial$ and $f_{\sharp}$ for any
			Lipschitz mapping $f:\mathbb{R}^n\to \mathbb{R}^n$.
	\end{itemize}

	The first three items easily follow from the definition of $\ell_d^{\RC}$.

	Let us go to prove that $\ell_d^{\RC}$ is an isomorphism. We first verify the
	mapping $\ell_d^{\RC}$ is an homomorphism, that is, for each $T_1, T_2\in
	\RC_d^{loc}(\mathbb{R}^n)$, we can find sequences $\{T_{1,m}\}, \{T_{2,m}\}
	\subseteq \LC_d(\mathbb{R}^n)$ such that $T_{1,m}\ora{\mass}T_1, T_{2,m}
	\ora{\mass}T_2$, thus we have that $T_{1,m}+T_{2,m}\ora{\mass}T_1+T_2$. 
	By the definition of $\ell_d^{\RC}$, we have that 
	\[
		\ell_d(T_{1,m})\ora{\fn}\ell_d^{\RC}(T_1), \
		\ell_d(T_{2,m})\ora{\fn}\ell_d^{\RC}(T_2) \text{ and }
		\ell_d(T_{1,m}+T_{2,m})\ora{\fn}\ell_d^{\RC}(T_1+T_2).
	\]
	But on the other hand, 
	\[
		\ell_d(T_{1,m}+T_{2,m})=\ell_d(T_{1,m})+
		\ell_d(T_{2,m})\ora{\fn}\ell_d^{\RC}(T_1)+\ell_d^{\RC}(T_2),
	\]
	and we get so that $\ell_d^{\RC}(T_1+T_2)=\ell_d^{\RC}(T_1)+\ell_d^{\RC}(T_2)$. 
	We next show that $\ell_d^{\RC}$ is an epimorphism. Indeed, for each $A\in
	\RC_d(\mathbb{R}^n, \mathbb{Z})$, $\varepsilon>0$, we can find Lipschitz
	chain $L\in \LC_d(\mathbb{R}^n;\mathbb{Z})$ such that $\mass(A-L)<
	\varepsilon$, thus we can find $\{L_m\}\subseteq \LC_d(\mathbb{R}^n, 
	\mathbb{Z})$ such that $L_m\ora{\mass}A$, then we have that 
	$\{\ell_d^{-1}(L_m)\}\subseteq\LC_d(\mathbb{R}^n)$ is a Cauchy sequence for 
	mass, and converges to $B\in\RC_d^{loc}(\mathbb{R}^n)$, hence $\ell_d^{\RC}(B)=A$.
	We conclude that $\ell_d^{\RC}$ is a monomorphism. Indeed, if $T_1, T_2\in
	\RC_d^{loc}(\mathbb{R}^n)$, and $\ell_d^{\RC}(T_1)=\ell_d^{\RC}(T_2)$. Then
	$\ell_d^{\RC}(T_1-T_2)=0$. But, by the definition of $\ell_d^{\RC}$, there 
	exists a sequence of Lipschitz chains $\{R_m\}\subseteq \LC_d(\mathbb{R}^n)$ 
	such that $R_m\ora{\mass} T_1-T_2$, thus we have 
	\[
		\ell_d(R_m)\ora{\mass} \ell_d^{\RC}(T_1-T_2)=0,
	\]
	thus $T_1=T_2$. 

	Now, we check that $\ell_d^{\RC}$ is an isometry. Indeed, for each $
	T\in\RC_d^{loc}(\mathbb{R}^n)$, there is a sequence of Lipschitz chains
	$T_m$ converges to $T$ in mass, we get that
	\[
		\mass(\ell_d^{\RC}(T))=\lim_{m\to \infty}\mass(\ell_d(T_m))=
		\lim_{m\to\infty}\mass(T_m)=\mass(T).
	\]

	Finally, we show that $\ell_d^{\RC}$ commute with $\partial$ and
	$f_{\sharp}$. For each $A\in\RC_d^{loc}(\mathbb{R}^n)$, there exists a sequence
	of Lipschitz chains $\{A_m\}$ such that $A_m\ora\mass A$, thus we have that
	$\ell_d(A_m)\ora{\mass} \ell_d^{\RC}(A)$, and $\partial\ell_d(A_m)\ora{\fn}
	\partial\ell_d^{\RC}(A)$. But on the other hand, we have that $\partial
	\ell_d(A_m)=\ell_{d-1}(\partial A_m)$ and $\partial A_m\ora{\fn}\partial A$,
	thus $\ell_{d-1}(\partial A_m)\ora{\fn} \ell_{d-1}^{\RC}(\partial A)$ and
	$ \partial\ell_d^{\RC}(A)=\ell_{d-1}^{\RC}(\partial A)$. Similarly, we can
	get that 
	\[
		f_{\sharp}\ell_d^{\RC}(A)=f_{\sharp}(\lim_{m\to\infty}\ell_d(A_m))=
		\lim_{m\to\infty}\ell_d(f_{\sharp}(A_m))=\ell_d^{\RC}(f_{\sharp}A).
	\]
\end{proof}
By Theorem 4.1 in \cite{White:1999:Acta}, we get that $\RC_d(\mathbb{R}^n;
G)=\{T\in \FC_{d}(\mathbb{R}^n;G):\mass(T)<\infty\}$ in case that $G$ is
discrete. Thus
$\NC_d(\mathbb{R}^n;\mathbb{Z})=\{T\in \RC_d(\mathbb{R}^n;\mathbb{Z}):
\partial T\in \RC_{d-1}(\mathbb{R}^n;\mathbb{Z})\}$. 
Since $\IC_d^{loc}(\mathbb{R}^n)=\{T\in \RC_d^{loc}(\mathbb{R}^n):\partial T
\in \RC_{d-1}^{loc}(\mathbb{R}^n)\}$, as a consequence of Lemma \ref{le:isoR},
we get the following corollary.
\begin{corollary}\label{co:isoI}
	$(\IC_d^{loc}(\mathbb{R}^n),\mathsf{N})$ and $(\NC_d(\mathbb{R}^n;\mathbb{Z}),
	\mathsf{N})$ are natural isometrically isomorphic.
\end{corollary}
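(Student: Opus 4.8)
The plan is to restrict the isometric isomorphism $\ell_d^{\RC}\colon\RC_d^{loc}(\mathbb{R}^n)\to\RC_d(\mathbb{R}^n;\mathbb{Z})$ constructed in Lemma~\ref{le:isoR}, together with its analogue $\ell_{d-1}^{\RC}$ in dimension $d-1$, to the subgroups of normal chains, and then to check that this restriction preserves the normality condition, the norm $\mathsf{N}$, and is natural. Everything is inherited from Lemma~\ref{le:isoR}; there is essentially no new difficulty.

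First I would recall from the proof of Lemma~\ref{le:isoR} that $\partial\circ\ell_d^{\RC}=\ell_{d-1}^{\RC}\circ\partial$ and that $\ell_d^{\RC}$ and $\ell_{d-1}^{\RC}$ are mass-preserving group isomorphisms commuting with $f_{\sharp}$. Take $T\in\IC_d^{loc}(\mathbb{R}^n)$. By the identity $\IC_d^{loc}(\mathbb{R}^n)=\{T\in\RC_d^{loc}(\mathbb{R}^n):\partial T\in\RC_{d-1}^{loc}(\mathbb{R}^n)\}$ we have $T\in\RC_d^{loc}(\mathbb{R}^n)$ and $\partial T\in\RC_{d-1}^{loc}(\mathbb{R}^n)$, hence $\ell_d^{\RC}(T)\in\RC_d(\mathbb{R}^n;\mathbb{Z})$ and $\partial\ell_d^{\RC}(T)=\ell_{d-1}^{\RC}(\partial T)\in\RC_{d-1}(\mathbb{R}^n;\mathbb{Z})$; by the description $\NC_d(\mathbb{R}^n;\mathbb{Z})=\{T\in\RC_d(\mathbb{R}^n;\mathbb{Z}):\partial T\in\RC_{d-1}(\mathbb{R}^n;\mathbb{Z})\}$ this gives $\ell_d^{\RC}(T)\in\NC_d(\mathbb{R}^n;\mathbb{Z})$. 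Conversely, since $\ell_d^{\RC}$ and $\ell_{d-1}^{\RC}$ are bijections, if $A\in\NC_d(\mathbb{R}^n;\mathbb{Z})$ and $T=(\ell_d^{\RC})^{-1}(A)$, then $\partial T=(\ell_{d-1}^{\RC})^{-1}(\partial A)\in\RC_{d-1}^{loc}(\mathbb{R}^n)$, so $T\in\IC_d^{loc}(\mathbb{R}^n)$. Thus $\ell_d^{\RC}$ restricts to a group isomorphism $\IC_d^{loc}(\mathbb{R}^n)\to\NC_d(\mathbb{R}^n;\mathbb{Z})$.

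It remains to verify that this restriction is an isometry for $\mathsf{N}$ and is natural. For $T\in\IC_d^{loc}(\mathbb{R}^n)$,
\[
	\mathsf{N}(\ell_d^{\RC}(T))=\mass(\ell_d^{\RC}(T))+\mass(\partial\ell_d^{\RC}(T))=\mass(T)+\mass(\ell_{d-1}^{\RC}(\partial T))=\mass(T)+\mass(\partial T)=\mathsf{N}(T),
\]
since $\ell_d^{\RC}$ and $\ell_{d-1}^{\RC}$ preserve mass. Naturality, namely commutation with $\partial$ and with $f_{\sharp}$ for any Lipschitz mapping $f\colon\mathbb{R}^n\to\mathbb{R}^n$, is inherited verbatim from the corresponding properties of $\ell_d^{\RC}$ established in Lemma~\ref{le:isoR}. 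The only point deserving a moment of care is the equivalence of the two normality conditions in both directions, which is precisely where one uses that $\ell_{d-1}^{\RC}$ is an isomorphism (not just injective), so that the preimage of a locally rectifiable boundary is again locally rectifiable.
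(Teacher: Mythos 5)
Your argument is correct and is exactly the route the paper takes: it states the corollary as an immediate consequence of Lemma \ref{le:isoR} after recording the characterizations $\NC_d(\mathbb{R}^n;\mathbb{Z})=\{T\in\RC_d(\mathbb{R}^n;\mathbb{Z}):\partial T\in\RC_{d-1}(\mathbb{R}^n;\mathbb{Z})\}$ and $\IC_d^{loc}(\mathbb{R}^n)=\{T\in\RC_d^{loc}(\mathbb{R}^n):\partial T\in\RC_{d-1}^{loc}(\mathbb{R}^n)\}$, so restricting $\ell_d^{\RC}$ and using its commutation with $\partial$, mass preservation, and bijectivity in degree $d-1$ is precisely the intended proof. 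Your explicit check of both directions of the normality condition and of the $\mathsf{N}$-isometry fills in details the paper leaves implicit.
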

For any $T\in \FC_d(\mathbb{R}^n;G)$, there exist $R\in
\FC_d(\mathbb{R}^n;G)$ and $S\in \FC_{d+1}(\mathbb{R}^n;G)$
such that $T=R+\partial S$ and $\mass(R)+\mass(S)\leq 2 \fn(T)<\infty$. Combine this with Theorem 4.1
in \cite{White:1999:Acta}, we get that 
\[
	\FC_d(\mathbb{R}^n;G)=\{R+\partial S: R\in
	\RC_d(\mathbb{R}^n;G),S\in \RC_{d+1}(\mathbb{R}^n;G)\},
\]
in case $G$ is discrete. Hence, from Lemma \ref{le:isoR}, we get the following
consequence.
\begin{corollary}\label{co:isoF}
	$(\FC_d^{loc}(\mathbb{R}^n),\fn)$ and $(\FC_d(\mathbb{R}^n;\mathbb{Z}),\fn)$ are
	natural isometric isomorphic.
\end{corollary}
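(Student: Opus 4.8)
The plan is to transport the structure along the degreewise isometric isomorphisms $\ell^{\RC}_\bullet\colon\RC^{loc}_\bullet(\mathbb{R}^n)\to\RC_\bullet(\mathbb{R}^n;\mathbb{Z})$ supplied by Lemma \ref{le:isoR}, using the two presentations $\FC_d^{loc}(\mathbb{R}^n)=\{R+\partial S:R\in\RC_d^{loc}(\mathbb{R}^n),\,S\in\RC_{d+1}^{loc}(\mathbb{R}^n)\}$ (by definition) and $\FC_d(\mathbb{R}^n;\mathbb{Z})=\{R+\partial S:R\in\RC_d(\mathbb{R}^n;\mathbb{Z}),\,S\in\RC_{d+1}(\mathbb{R}^n;\mathbb{Z})\}$ (the displayed consequence of Theorem 4.1 in \cite{White:1999:Acta}). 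First I would set $\ell^{\FC}_d(R+\partial S)=\ell^{\RC}_d(R)+\partial\ell^{\RC}_{d+1}(S)$ and, symmetrically, $m^{\FC}_d(R'+\partial S')=(\ell^{\RC}_d)^{-1}(R')+\partial(\ell^{\RC}_{d+1})^{-1}(S')$. Both are well defined: if $R+\partial S=R'+\partial S'$ then $R-R'=\partial(S'-S)$ is locally rectifiable, so the commutation of $\ell^{\RC}$ with $\partial$ from Lemma \ref{le:isoR} gives $\ell^{\RC}_d(R)-\ell^{\RC}_d(R')=\partial(\ell^{\RC}_{d+1}(S')-\ell^{\RC}_{d+1}(S))$, i.e. the two candidate values of $\ell^{\FC}_d$ coincide, and the same computation run through $(\ell^{\RC})^{-1}$ handles $m^{\FC}_d$. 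Linearity of $\ell^{\RC}$ and of $\partial$ makes both maps homomorphisms, and they are readily mutually inverse (evaluate on a decomposition and cancel using $\ell^{\RC}(\ell^{\RC})^{-1}=\id$), so $\ell^{\FC}_d$ is a group isomorphism which restricts to $\ell^{\RC}_d$ on $\RC_d^{loc}(\mathbb{R}^n)$ (take $S=0$); this is the sense in which it is natural.

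Next I would check that $\ell^{\FC}_d$ is an isometry for $\fn$. On the one hand, for any $T=R+\partial S$ with $R\in\RC_d^{loc}(\mathbb{R}^n)$ and $S\in\RC_{d+1}^{loc}(\mathbb{R}^n)$ one has $\fn(\ell^{\FC}_d(T))\le\fn(\ell^{\RC}_d(R))+\fn(\partial\ell^{\RC}_{d+1}(S))\le\mass(\ell^{\RC}_d(R))+\mass(\ell^{\RC}_{d+1}(S))=\mass(R)+\mass(S)$, using only subadditivity of $\fn$, $\fn\le\mass$, $\fn(\partial B)\le\mass(B)$, and that $\ell^{\RC}$ preserves mass in each degree; taking the infimum over such decompositions, which is the definition of $\fn$ on $\FC_d^{loc}(\mathbb{R}^n)$, gives $\fn(\ell^{\FC}_d(T))\le\fn(T)$. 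On the other hand, the same estimate applied to $m^{\FC}_d$ gives $\fn(m^{\FC}_d(U))\le\mass(R')+\mass(S')$ for every rectifiable $\mathbb{Z}$-chain decomposition $U=R'+\partial S'$ of $U\in\FC_d(\mathbb{R}^n;\mathbb{Z})$; here one needs that $\fn$ on $\FC_d(\mathbb{R}^n;\mathbb{Z})$ is itself the infimum of $\mass(R')+\mass(S')$ over such decompositions, which follows from the standard identity $\fn(U)=\inf\{\mass(U-\partial S')+\mass(S'):S'\in\FC_{d+1}(\mathbb{R}^n;\mathbb{Z})\}$ together with Theorem 4.1 in \cite{White:1999:Acta} (finite mass forces rectifiability when $\mathbb{Z}$ is discrete). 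Combining $\fn(\ell^{\FC}_d(T))\le\fn(T)$ and $\fn(m^{\FC}_d(U))\le\fn(U)$ with $m^{\FC}_d\circ\ell^{\FC}_d=\id$ yields $\fn(T)=\fn(m^{\FC}_d(\ell^{\FC}_d(T)))\le\fn(\ell^{\FC}_d(T))\le\fn(T)$, so $\ell^{\FC}_d$ is an isometric isomorphism. Finally, commutation with $\partial$ and with $f_\sharp$ for Lipschitz $f$ is formal: for instance $\partial\ell^{\FC}_d(R+\partial S)=\partial\ell^{\RC}_d(R)=\ell^{\FC}_{d-1}(\partial(R+\partial S))$, the last equality since $\partial(R+\partial S)=\partial R$, viewed as the decomposition $0+\partial R$, has $\ell^{\FC}_{d-1}$-image $\partial\ell^{\RC}_d(R)$; similarly $f_\sharp\ell^{\FC}_d=\ell^{\FC}_d f_\sharp$ follows from $f_\sharp\partial=\partial f_\sharp$ and $f_\sharp\ell^{\RC}=\ell^{\RC}f_\sharp$ (Lemma \ref{le:isoR}).

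The step I expect to be the main obstacle is precisely the identification of $\fn$ on $\FC_d(\mathbb{R}^n;\mathbb{Z})$ — defined abstractly as the completion norm of the polyhedral flat norm — with the infimum of $\mass(R')+\mass(S')$ over \emph{rectifiable} $\mathbb{Z}$-chain decompositions $U=R'+\partial S'$, so that it genuinely matches the definition of $\fn$ on $\FC_d^{loc}(\mathbb{R}^n)$ transported by $\ell^{\RC}$; this is where the discreteness of $\mathbb{Z}$ and Theorem 4.1 of \cite{White:1999:Acta} are indispensable, and some care with the passage between $\RC^{loc}$ and compactly supported $\RC$ is also needed there. Once this identity is in hand, the remainder is a routine diagram chase along the isometries $\ell^{\RC}_\bullet$.
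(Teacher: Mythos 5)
Your proposal is correct and follows essentially the same route as the paper: define $h_d(R+\partial S)=\ell_d^{\RC}(R)+\partial\ell_{d+1}^{\RC}(S)$, verify well-definedness through the commutation of $\ell^{\RC}$ with $\partial$ (exploiting that $\partial(S'-S)$ rectifiable forces local integrality), and prove the isometry by comparing the two decomposition infima for $\fn$, with White's Theorem 4.1 supplying the rectifiable presentation of $\FC_d(\mathbb{R}^n;\mathbb{Z})$. The only cosmetic difference is that you build the explicit inverse $m_d^{\FC}$ where the paper checks injectivity and surjectivity of $h_d$ separately, which changes nothing of substance.
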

\begin{proof}
	Let $\ell_{\ast}:\LC_{\ast}(\mathbb{R}^n)\to \LC_{\ast}(\mathbb{R}^n;
	\mathbb{Z})$ and $\ell_{\ast}^{\RC}:\RC_{\ast}^{loc}(\mathbb{R}^n)\to
	\RC_{\ast}(\mathbb{R}^n;\mathbb{Z})$ be the same as in Lemma \ref{le:isoR}.
	For any $T\in \FC_d^{loc}(\mathbb{R}^n)$, there exist $R\in
	\RC_d^{loc}(\mathbb{R}^n)$ and $S\in \RC_{d+1}^{loc}(\mathbb{R}^n)$ such
	that $T=R+\partial S$, we put $h_d(T)=\ell_d^{\RC}(R)+\partial
	(\ell_{d+1}^{\RC}(S))$. We will show that $h_d:\FC_d^{loc}(\mathbb{R}^n)\to
	\FC_d(\mathbb{R}^n;\mathbb{Z})$ is well defined, that is, which does not 
	depend on the decomposition $T=R+\partial S$.
	Indeed, if there exist $R_1\in\RC_d^{loc}(\mathbb{R}^n)$, 
	$S_1\in\RC_{d+1}^{loc}(\mathbb{R}^n)$ such that $T=R_1+\partial S_1$,
	then we get that $R-R_1+\partial S-\partial S_1=0$, thus
	\[
		0=\ell_d^{\RC}(R-R_1+\partial S-\partial
		S_1)=\ell_d^{\RC}(R)-\ell_d^{\RC}(R_1)+\partial
		\ell_{d+1}^{\RC}(S)-\partial \ell_{d+1}^{\RC}(S_1),
	\]
	and $\ell_d^{\RC}(R)+\partial \ell_{d+1}^{\RC}(S)=\ell_d^{\RC}(R_1)+\partial
	\ell_{d+1}^{\RC}(S_1)$.

	We now check that $h_d$ is a homeomorphism. Indeed,
	for each $T_1, T_2\in\FC_d^{loc}(\mathbb{R}^n)$, there exists $R_1,
	R_2\in\RC_d^{loc}(\mathbb{R}^n), S_1, S_2\in\RC_{d+1}^{loc}(\mathbb{R}^n)$
	such that $T_1=R_1+\partial S_1, T_2=R_2+\partial S_2$, then we have that
	\[
		\begin{aligned}
			h_d(T_1+T_2)&=\ell_d^{\RC}(R_1+R_2)+\partial(\ell_{d+1}^{\RC}(S_1+S_2))\\
			&=\ell_d^{\RC}(R_1)+\ell_d^{\RC}(R_2)+\partial(\ell_{d+1}^{\RC}(S_1))+
			\partial(\ell_{d+1}^{\RC}(S_2))\\&=h_d(T_1)+h_d(T_2).
		\end{aligned}
	\] 

	If $h_d(T)=0$ for some $T\in \FC_d^{loc}(\mathbb{R}^n)$, writing 
	$T=R+\partial S$, then we get that 
	\[
		0= h_d(T)=h_d(R+\partial S)=\ell_d^{\RC}(R)+\partial
		(\ell_{d+1}^{\RC}(S)),
	\]
	and $\partial (\ell_{d+1}^{\RC}(S))=-\ell_d^{\RC}(R)\in
	\RC_d(\mathbb{R}^n;\mathbb{Z})$, thus $\ell_{d+1}^{\RC}(S)\in
	\NC_{d+1}(\mathbb{R}^n,\mathbb{Z})$, hence $S\in
	\IC_{d+1}^{loc}(\mathbb{R}^n)$ and $\partial
	(\ell_{d+1}^{\RC}(S))=\ell_d^{\RC}(\partial S)$. Therefore,
	$\ell_d^{\RC}(R+\partial S)=\ell_d^{\RC}(R)+\ell_d^{\RC}(\partial S)=0$, and 
	$R+\partial S=0$, we get that $h_d$ is a monomorphism. 

	Let us go to prove that $h_d$ is an epimorphism. For each $A\in
	\FC_d(\mathbb{R}^n; \mathbb{Z})$, there exists $B\in\RC_d(\mathbb{R}^n;
	\mathbb{Z}), C\in\RC_{d+1}(\mathbb{R}^n, \mathbb{Z})$ such that 
	$A=B+\partial C$. We put $B_1=(\ell_d^{\RC})^{-1}(R)$ and
	$C_1=(\ell_{d+1}^{\RC})^{-1}(S)$, then $B_1\in \RC_d^{loc}(\mathbb{R}^n)$ 
	and $C_1\in\RC_{d+1}^{loc}(\mathbb{R}^n)$. We put $A_1=B_1+\partial C_1$, 
	then $A_1\in\FC_d^{loc}(\mathbb{R}^n)$ and $h_d(A_1)=A$.

	Now, we check that $h_d$ is an isometry. For any $T
	\in\FC_d^{loc}(\mathbb{R}^n)$, if $T=R+\partial S$, $R\in
	\RC_d^{loc}(\mathbb{R}^n)$ and $S\in \RC_{d+1}^{loc}(\mathbb{R}^n)$, then we
	have that $h_d(T)=\ell_d^{\RC}(R)+\partial \ell_{d+1}^{\RC}(S)$, and 
	\[
		\fn(h_d(T))\leq \mass(\ell_d^{\RC}(R))+\mass(\ell_{d+1}^{\RC}(S))=\mass(R)+\mass(S),
	\]
	thus
	\begin{equation}\label{eq:isoF50}
		\fn(h_d(T))\leq \inf\{\mass(R)+\mass(S):T=R+\partial S,R\in
		\RC_d^{loc}(\mathbb{R}^n), S\in \RC_{_d+1}^{loc}(\mathbb{R}^n)\}=\fn(T).
	\end{equation}
	Conversely, for any $T'\in \FC_d(\mathbb{R}^n;\mathbb{Z})$, if
	$T'=R'+\partial S'$, $R'\in \RC_d(\mathbb{R}^n;\mathbb{Z})$ and $S'\in
	\RC_{d+1}(\mathbb{R}^n;\mathbb{Z})$, then we see that 
	$R=(\ell_d^{\RC})^{-1}(R')\in \RC_d^{loc}(\mathbb{R}^n)$ and 
	$S=(\ell_{d+1}^{\RC})^{-1}(S')\in \RC_{d+1}^{loc}(\mathbb{R}^n)$, and that 
	$h_d(R+\partial S)=T'$. We get that 
	\[
		\fn(h_d^{-1}(T'))=\fn(R+\partial S)\leq
		\mass(R)+\mass(S)=\mass(R')+\mass(S'),
	\]
	thus
	\begin{equation}\label{eq:isoF60}
		\fn(h_d^{-1}(T'))\leq \fn(T').
	\end{equation}
	From \eqref{eq:isoF50} and \eqref{eq:isoF60}, we get that $h_d$ is an
	isometry.

	The commutativity of $h_d$ and $\partial$ follows from the definition of
	$h_d$.

	Finally, we show that $h_d$ commute with $f_{\sharp}$.
	Indeed, for each $A\in\FC_d^{loc}(\mathbb{R}^n)$, writing $A=B+\partial C$,
	$B\in\RC_d^{loc}(\mathbb{R}^n)$, $C\in\RC_{d+1}^{loc}(\mathbb{R}^n)$, then
	$h_d(A)=\ell_d^{\RC}(B)+\partial (\ell_{d+1}^{\RC}(C))$, and 
	\[
		f_{\sharp}h_d(A)=f_{\sharp}\ell_d^{\RC}(B)+f_{\sharp}\partial \ell_{d+1}^{\RC}(C))=\ell_d^{\RC}(f_{\sharp}(B))+\partial\ell_{d+1}^{\RC}(f_{\sharp}(C))=h_d(f_{\sharp}A).	
	\]
\end{proof}

\begin{proposition}
	\label{prop:intsize}
	Let $B_0\subseteq \mathbb{R}^{n}$ be a compact Lipschitz neighborhood retract
	with $\HM^d(B_0)=0$. Then for any $\mathscr{S}\in \{\IC,\RC\}$
	and $T\in \mathscr{S}_{d-1}(B_0)$ with $\partial T=0$, we have that 
	\[
		\inf\{\size(S):S\in\mathscr{S}_d(\mathbb{R}^n), \partial S=T\}=\inf\{
		\HM^d(E\setminus B_0):E\in \mathscr{C}_{\mathscr{S}}(B_0,\mathbb{Z},[T])\}.
	\]
\end{proposition}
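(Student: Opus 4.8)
The plan is to deduce the statement from Proposition~\ref{prop:flatsize} (with $G=\mathbb{Z}$) by transporting everything through the isometric isomorphisms of Lemma~\ref{le:isoR} and Corollary~\ref{co:isoI}, in the same way that Corollary~\ref{co:scequ} is obtained from Theorem~\ref{thm:scequ}. Put $\mathscr{S}'=\RC$ if $\mathscr{S}=\RC$ and $\mathscr{S}'=\NC$ if $\mathscr{S}=\IC$, and let $\iota_\ast$ denote the natural isometric isomorphism $\RC^{loc}_\ast(\mathbb{R}^n)\to\RC_\ast(\mathbb{R}^n;\mathbb{Z})$ of Lemma~\ref{le:isoR}, which, by Corollary~\ref{co:isoI}, restricts to $\IC^{loc}_\ast(\mathbb{R}^n)\to\NC_\ast(\mathbb{R}^n;\mathbb{Z})$ when $\mathscr{S}=\IC$. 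Since $\iota_\ast$ commutes with $\partial$ and with $f_\sharp$ for every Lipschitz $f$, it preserves supports, hence restricts to an isomorphism $\mathscr{S}_\ast(X)\to\mathscr{S}'_\ast(X;\mathbb{Z})$ for every closed $X\subseteq\mathbb{R}^n$ and sends compactly supported chains to compactly supported chains; and since it is the identity on the underlying rectifiable-set-with-multiplicity data, it leaves the Hausdorff size $\size$ unchanged (cf.\ Proposition~8.2 of \cite{White:1999:Ann}). Set $T'=\iota_{d-1}(T)$, so that $T'\in\mathscr{S}'_{d-1}(B_0;\mathbb{Z})$ and $\partial T'=0$.

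Then $\iota_d$ is a size-preserving bijection between $\{S\in\mathscr{S}_d(\mathbb{R}^n):\partial S=T\}$ and $\{S'\in\mathscr{S}'_d(\mathbb{R}^n;\mathbb{Z}):\partial S'=T',\ \spt S'\ \text{compact}\}$, so the left-hand side of the asserted identity equals the infimum of $\size(S')$ over the latter family; by Lemma~\ref{le:cnon} (applied to $\mathscr{S}'$ and $T'$) this infimum is unchanged if one drops the compactness requirement, and by Proposition~\ref{prop:flatsize} (applied to $B_0$, $\mathscr{S}'$, $T'$, $G=\mathbb{Z}$) it equals $\inf\{\HM^d(E):E\in\mathscr{C}_{\mathscr{S}'}(B_0,T',\mathbb{Z})\}$. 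It remains to match the competing sets: for compact $E\supseteq B_0$, one has $E\in\mathscr{C}_{\mathscr{S}}(B_0,[T],\mathbb{Z})$ exactly when there is $S\in\mathscr{S}_d(E)$ with $\partial S=T$, and applying $\iota_d$ (which is onto $\mathscr{S}'_d(E;\mathbb{Z})$ and preserves $\partial$ and supports) this holds exactly when there is $S'\in\mathscr{S}'_d(E;\mathbb{Z})$ with $\partial S'=T'$, i.e.\ exactly when $E\in\mathscr{C}_{\mathscr{S}'}(B_0,T',\mathbb{Z})$. Finally, $\HM^d(B_0)=0$ forces $\HM^d(E\setminus B_0)=\HM^d(E)$ for every such $E$, and chaining the displayed equalities with this observation gives the proposition.

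The only delicate point is the first paragraph: one must verify that $\iota_\ast$ genuinely preserves supports and Hausdorff size and induces an isomorphism of intrinsic homology theories $\Hom^{\mathscr{S}}_\ast\cong\Hom^{\mathscr{S}'}_\ast(\,\cdot\,;\mathbb{Z})$ carrying $[T]$ to $T'$ — the same bookkeeping that underlies Corollary~\ref{co:scequ}. Once it is in place, the proposition is a formal consequence of Lemma~\ref{le:cnon}, Proposition~\ref{prop:flatsize}, and the triviality $\HM^d(B_0)=0$; there is no further analytic input.
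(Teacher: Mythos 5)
Your proposal is correct and takes essentially the same route as the paper, whose proof of Proposition \ref{prop:intsize} is just the citation of Proposition \ref{prop:flatsize}, Lemma \ref{le:isoR}, Corollaries \ref{co:isoI} and \ref{co:isoF}, and the coincidence of Hausdorff and flat size; you simply make explicit the bookkeeping the paper leaves implicit (Lemma \ref{le:cnon} to pass between compactly supported and general chains, the matching of the spanning classes, and $\HM^d(B_0)=0$). The one loosely justified step---inferring support preservation of $\iota_\ast$ from commutation with $f_\sharp$---is harmless and is exactly the verification you flag: it follows because the isometry identifies the mass measures $\mu_T$ and $\mu_{\iota T}$, whose supports are the supports of the chains.
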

\begin{proof}
	The proof easily follows from Proposition \ref{prop:flatsize}, Lemma
	\ref{le:isoR}, Corollary \ref{co:isoI}, Corollary \ref{co:isoF} and the
	coincidence of the Hausdorff size and the flat size for rectifiable flat
	chains.
\end{proof}

\section{Comparison of homology}
\begin{proposition}\label{prop:homeq}
	Let $G$ be an abelian group. Let $X\subseteq\mathbb{R}^{n}$ be a compact subset
	which is a neighborhood retract. Suppose $\rho:U\to X$ is a retraction.  Let
	$\Hom_{\ast}$ and $\Hom_{\ast}'$ be any two homologies satisfying the 
	axioms 1, 2 and 4 of Eilenberg and Steenrod. If there is a set $V$ which is a
	union of finite number of polyhedra and $X\subseteq V\subseteq U$, and there
	is an isomorphism $h_V:\Hom_{\ast}(V;G)\to \Hom_{\ast}'(V;G)$ such that
	$h_V\circ \Hom_{\ast}(i_{X,V}\circ \rho\vert_V)=\Hom_{\ast}'(i_{X,V}\circ
	\rho\vert_V)\circ h_V$, then $h_V(\ker \Hom_{\ast}
	(\rho\vert_V))=\ker\Hom_{\ast}'(\rho\vert_V)$ and 
	\[
		\Hom_{\ast}(X;G)\cong\Hom_{\ast}'(X;G).
	\]
\end{proposition}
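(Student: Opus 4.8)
The plan is to exploit that $X$ is a \emph{strict} retract of $V$. Since $\rho\vert_X=\id_X$ and $X\subseteq V\subseteq U$, the map $r:=\rho\vert_V\colon V\to X$ satisfies $r\circ i_{X,V}=\id_X$, where $i_{X,V}\colon X\hookrightarrow V$ is the inclusion. First I would push this to homology using only the functoriality of $\Hom_{\ast}$ (Axioms 1 and 2): the endomorphism $p:=\Hom_{\ast}(i_{X,V}\circ\rho\vert_V)=\Hom_{\ast}(i_{X,V})\circ\Hom_{\ast}(\rho\vert_V)$ of $\Hom_{\ast}(V;G)$ is idempotent, because $\Hom_{\ast}(\rho\vert_V)\circ\Hom_{\ast}(i_{X,V})=\Hom_{\ast}(\id_X)=\id$; consequently $\Hom_{\ast}(i_{X,V})$ is a split monomorphism and $\Hom_{\ast}(\rho\vert_V)$ a split epimorphism. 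Two short bookkeeping identities then follow: $\ker p=\ker\Hom_{\ast}(\rho\vert_V)$ (the inclusion $\supseteq$ is immediate from $p=\Hom_{\ast}(i_{X,V})\circ\Hom_{\ast}(\rho\vert_V)$, and $\subseteq$ from $\Hom_{\ast}(\rho\vert_V)\circ p=\Hom_{\ast}(\rho\vert_V)$), and $\im p=\im\Hom_{\ast}(i_{X,V})$ (from $p\circ\Hom_{\ast}(i_{X,V})=\Hom_{\ast}(i_{X,V})$); since $\Hom_{\ast}(i_{X,V})$ is injective, it restricts to an isomorphism $\Hom_{\ast}(X;G)\cong\im p$, with inverse the restriction of $\Hom_{\ast}(\rho\vert_V)$. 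The identical discussion for $\Hom_{\ast}'$ gives the idempotent $p':=\Hom_{\ast}'(i_{X,V}\circ\rho\vert_V)$ with $\ker p'=\ker\Hom_{\ast}'(\rho\vert_V)$ and an isomorphism $\Hom_{\ast}'(X;G)\cong\im p'$.

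Next I would feed in the hypothesis, which is precisely the relation $h_V\circ p=p'\circ h_V$. Because $h_V$ is an isomorphism intertwining the two idempotents, it maps $\im p$ onto $\im p'$ and $\ker p$ onto $\ker p'$: if $a=p(b)$ then $h_V(a)=p'(h_V(b))\in\im p'$, and conversely every element of $\im p'$ equals $p'(h_V(b))=h_V(p(b))$ for some $b$ by surjectivity of $h_V$; the argument for the kernels is symmetric, using injectivity of $h_V$ for the nontrivial inclusion. Combining the kernel statement with the identifications $\ker p=\ker\Hom_{\ast}(\rho\vert_V)$ and $\ker p'=\ker\Hom_{\ast}'(\rho\vert_V)$ yields the first assertion $h_V\bigl(\ker\Hom_{\ast}(\rho\vert_V)\bigr)=\ker\Hom_{\ast}'(\rho\vert_V)$. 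For the second assertion, $h_V$ restricts to an isomorphism $\im p\to\im p'$; composing with $\Hom_{\ast}(X;G)\cong\im p$ and $\im p'\cong\Hom_{\ast}'(X;G)$ gives $\Hom_{\ast}(X;G)\cong\Hom_{\ast}'(X;G)$, the composite being $\Hom_{\ast}'(\rho\vert_V)\circ h_V\circ\Hom_{\ast}(i_{X,V})$.

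Everything above is carried out degreewise, so the argument runs in each degree of the graded groups. I do not expect a genuine obstacle here: the proof is formal once one notices that a strict retraction turns into an idempotent on homology and that an isomorphism intertwining idempotents matches up both images and kernels. The only point demanding a moment of care is the pair of identities $\ker p=\ker\Hom_{\ast}(\rho\vert_V)$ and $\im\Hom_{\ast}(i_{X,V})=\im p$, which are what let one trade the statement about the retraction $\rho\vert_V$ for a statement about the single idempotent $p$; with those in hand the conclusion is pure module algebra and uses no axiom beyond functoriality.
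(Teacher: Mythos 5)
Your proof is correct and is essentially the paper's argument: the paper likewise uses $\rho\vert_V\circ i_{X,V}=\id_X$ to make $\Hom_{\ast}(i_{X,V})$ a (split) monomorphism and $\Hom_{\ast}(\rho\vert_V)$ a (split) epimorphism, and then shows that the very same composite $h_X=\Hom_{\ast}'(\rho\vert_V)\circ h_V\circ\Hom_{\ast}(i_{X,V})$ is injective and surjective via the intertwining relation for $h_V$. Your idempotent bookkeeping ($\ker p=\ker\Hom_{\ast}(\rho\vert_V)$, $\im p=\im\Hom_{\ast}(i_{X,V})$, and $h_V$ matching kernels and images) is just a more explicit packaging of the same verification, and in particular fills in the kernel assertion the paper states rather tersely.
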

\begin{proof}
	Since $\rho\vert_X=\id_X$, we have that $\rho\vert_V \circ i_{X,V}=\id_X$,
	thus 
	\[
		\Hom_{\ast}(\rho\vert_V) \circ \Hom_{\ast}(i_{X,V})=\id_{\Hom_{\ast}(X;G)}
		\text{ and }
		\Hom_{\ast}'(\rho\vert_V) \circ
		\Hom_{\ast}'(i_{X,V})=\id_{\Hom_{\ast}'(X;G)},
	\]
	hence $\Hom_{\ast}(i_{X,V})$ and $\Hom_{\ast}'(i_{X,V})$ are monomorphism,
	$\Hom_{\ast}(\rho\vert_V)$ and $\Hom_{\ast}'(\rho\vert_V)$ are epimorphism.
	Since $\Hom_{\ast}(i_{X,V}\circ \rho\vert_V)=\Hom_{\ast}(i_{X,V})\circ
	\Hom_{\ast}(\rho\vert_V)$, $\Hom_{\ast}'(i_{X,V}\circ
	\rho\vert_V)=\Hom_{\ast}'(i_{X,V})\circ \Hom_{\ast}'(\rho\vert_V)$, and 
	$h_V\circ \Hom_{\ast}(i_{X,V}\circ \rho\vert_V)=\Hom_{\ast}'(i_{X,V}\circ
	\rho\vert_V)\circ h_V$, we have that 
	\[
		h_V(\ker \Hom_{\ast} (\rho\vert_V))=\ker\Hom_{\ast}'(\rho\vert_V).
	\]

	We now consider the homomorphism
	\[
		h_X=\Hom_{\ast}'(\rho\vert_V)\circ h_V \circ
		\Hom_{\ast}(i_{X,V}):\Hom_{\ast}(X;G)\to \Hom_{\ast}'(X;G).
	\]
	Since $h_V\circ \Hom_{\ast}(i_{X,V}\circ \rho\vert_V)=\Hom_{\ast}'(i_{X,V}\circ
	\rho\vert_V)\circ h_V$, we get that 
	\[
		h_X\circ \Hom_{\ast}(\rho\vert_V)=\Hom_{\ast}'(\rho\vert_V)\circ h_V\circ
		\Hom_{\ast}(i_{X,V})\circ
		\Hom_{\ast}(\rho\vert_V)=\Hom_{\ast}'(\rho\vert_V)\circ h_V.
	\]
	But $h_V$ is an isomorphism and $\Hom_{\ast}'(\rho\vert_V)$ is an
	epimorphism, we get that $h_X$ is also an epimorphism. Similarly, we have
	that 
	\[
		\Hom_{\ast}'(i_{X,V})\circ h_V=h_V\circ \Hom_{\ast}(i_{X,V}).
	\]
	Since $h_V$ is an isomorphism, $\Hom_{\ast}'(i_{X,V})$ and
	$\Hom_{\ast}(i_{X,V})$ are monomorphism, we get that $h_X$ is a
	monomorphism.

\end{proof}
\begin{corollary} \label{co:sc}
	Let $G$ be an abelian group. Let $X\subseteq\mathbb{R}^{n}$ be a compact subset
	which is a neighborhood retract. Let $\Hom_{\ast}^s$ be the singular homology.
	Then the natural homomorphism  
	\[
		\Hom_{\ast}^s(X;G)\to\cech_{\ast}(X;G)
	\]
	is an isomorphism.
\end{corollary}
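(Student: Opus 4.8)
The plan is to deduce this from Proposition \ref{prop:homeq}, applied with $\Hom_{\ast}$ equal to singular homology $\Hom_{\ast}^{s}$ and $\Hom_{\ast}'$ equal to \v Cech homology $\cech_{\ast}$. Both are covariant functors on topological spaces and continuous maps (satisfying $\Hom_{\ast}(\id)=\id$ and $\Hom_{\ast}(f\circ g)=\Hom_{\ast}(f)\circ\Hom_{\ast}(g)$), hence they satisfy axioms 1 and 2, which are the only structural axioms used in the proof of Proposition \ref{prop:homeq}. So it remains to produce the auxiliary polyhedron $V$ and the isomorphism $h_V$ required there. Since $X$ is a compact neighborhood retract, I fix an open set $U\supseteq X$ together with a retraction $\rho:U\to X$; since $X$ is compact and $U$ is open I cover $X$ by finitely many closed cubes contained in $U$, and let $V$ be their union, so that $V$ is a finite union of polyhedra with $X\subseteq V\subseteq U$.

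For $h_V$ I would take the natural homomorphism $\theta_V:\Hom_{\ast}^{s}(V;G)\to\cech_{\ast}(V;G)$ itself. The one substantive input is that $\theta_V$ is an isomorphism, which holds because $V$ is a finite polyhedron: both $\Hom_{\ast}^{s}(V;G)$ and $\cech_{\ast}(V;G)$ are canonically the simplicial homology of a triangulation of $V$ — for \v Cech homology this is because the open stars of the vertices of the iterated barycentric subdivisions form a cofinal family of open covers whose nerves are those subdivisions, so the defining inverse limit stabilizes — and $\theta_V$ is compatible with these identifications. Naturality of $\theta$ applied to the self-map $i_{X,V}\circ\rho\vert_V:V\to V$ then yields, for free, the required compatibility
\[
	h_V\circ\Hom_{\ast}^{s}(i_{X,V}\circ\rho\vert_V)=\cech_{\ast}(i_{X,V}\circ\rho\vert_V)\circ h_V .
\]
Thus all hypotheses of Proposition \ref{prop:homeq} are met, and it delivers an isomorphism $\Hom_{\ast}^{s}(X;G)\cong\cech_{\ast}(X;G)$.

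To obtain the sharper statement that the \emph{natural} homomorphism is the isomorphism, I would retrace the proof of Proposition \ref{prop:homeq}: the isomorphism it constructs is $h_X=\cech_{\ast}(\rho\vert_V)\circ h_V\circ\Hom_{\ast}^{s}(i_{X,V})$. Applying naturality of $\theta$ once more gives $h_V\circ\Hom_{\ast}^{s}(i_{X,V})=\cech_{\ast}(i_{X,V})\circ\theta_X$, and since $\rho\vert_V\circ i_{X,V}=\id_X$ this collapses to
\[
	h_X=\cech_{\ast}(\rho\vert_V)\circ\cech_{\ast}(i_{X,V})\circ\theta_X=\cech_{\ast}(\rho\vert_V\circ i_{X,V})\circ\theta_X=\theta_X ,
\]
so $\theta_X$ is an isomorphism. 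The main obstacle is precisely the classical identification $\Hom_{\ast}^{s}(V;G)\cong\cech_{\ast}(V;G)$ for a finite polyhedron $V$ together with the check that the comparison map realizing it is exactly the natural transformation $\theta_V$; everything else is a formal consequence of functoriality and of Proposition \ref{prop:homeq}, and I would cite \cite{ES:1952} for that identification.
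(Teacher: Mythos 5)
Your proposal is correct and follows essentially the same route as the paper: apply Proposition \ref{prop:homeq} with $h_V$ equal to the natural comparison homomorphism on a finite polyhedral neighborhood $V$ of $X$ (an isomorphism since $V$ is a finite polyhedron), with the compatibility hypothesis supplied by naturality, and then identify the isomorphism $h_X=\cech_{\ast}(\rho\vert_V)\circ h_V\circ\Hom_{\ast}^{s}(i_{X,V})$ with the natural map $\theta_X$ using $\rho\vert_V\circ i_{X,V}=\id_X$. The paper leaves the last identification and the polyhedral case implicit (citing Marde\v{s}i\'c for the natural transformation), whereas you spell both out; the content is the same.
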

\begin{proof}
	It is clear there is a natural homomorphism
	$v_{\ast}:\Hom_{\ast}^s(\cdot;G)\to \cech_{\ast}(\cdot;G)$, which is commute
	with any induced homomorphism $f_{\ast}$, see for example
	\cite{Mardesic:1959}. We assume that $U\supseteq X$ is an open set,
	$\rho:U\to X$ is a retraction, and take $V$ a closed set which is a union of
	finite number of polyhedra, and satisfies that $X\subseteq V\subseteq U$. 
	Then we have that the natural homomorphism $v_{\ast}^{V}:\Hom_{\ast}^s(V;G)\to
	\cech_{\ast}(V;G)$ is an isomorphism, and $v_{\ast}^{V}\circ
	\Hom_{\ast}^s(i_{X,V}\circ \rho\vert_V)=\cech_{\ast}(i_{X,V}\circ
	\rho\vert_V)\circ v_{\ast}^{V}$.
	Thus $v_{\ast}^{X}=\cech_{\ast}(\rho\vert_V)\circ v_{\ast}^{V}\circ
	\Hom_{\ast}^s(i_{X,V})$, and by Proposition \ref{prop:homeq}, we get that
	$v_{\ast}^{X}:\Hom_{\ast}^s(X;G)\to \cech_{\ast}(X;G)$ is an isomorphism.
\end{proof}
\begin{proposition}\label{prop:hfc}
	Let $X\subseteq\mathbb{R}^{n}$ be a compact Lipschitz neighborhood retract.
	Then for any $\mathscr{S}\in \{\LC,\NC,\RC,\FC\}$ and
	complete normed abelian group $G$, we have that
	\[
		\Hom_{\ast}^{\mathscr{S}}(X;G)\cong \cech_{\ast}(X;G).
	\]
\end{proposition}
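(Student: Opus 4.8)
The plan is to surround $X$ by a finite polyhedron sitting inside the retracting neighborhood, identify the four chain homologies with \v Cech homology on that polyhedron, and then transport the identification down to $X$ via Proposition \ref{prop:homeq}. Fix an open set $U\supseteq X$ and a Lipschitz map $\rho:U\to X$ with $\rho\vert_X=\id_X$. Since $X$ is compact and $U$ is open, taking all closed cubes of a sufficiently fine dyadic grid that meet $X$ and triangulating their union produces a set $V$, the realization of a finite polyhedral complex, with $X\subseteq V\subseteq U$; note that $\rho\vert_V\circ i_{X,V}=\id_X$, where $i_{X,V}:X\hookrightarrow V$ denotes the inclusion and $\rho\vert_V$ is Lipschitz.

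The heart of the argument is to produce, for each $\mathscr{S}\in\{\LC,\NC,\RC,\FC\}$, a natural isomorphism $h_V:\Hom_{\ast}^{\mathscr{S}}(V;G)\to\cech_{\ast}(V;G)$. I would argue that $\Hom_{\ast}^{\mathscr{S}}(\cdot,\cdot;G)$ satisfies the seven Eilenberg--Steenrod axioms, extending the case $\mathscr{S}\in\{\IC,\FC,\mathbf{N}\}$ recorded above (Theorem 5.11 of \cite{FF:1960} and Proposition 3.7 of \cite{Pauw:2007}) to the $G$-coefficient versions of $\LC$, $\NC$, $\RC$ and $\FC$ by the same reasoning; the only subtle points are exactness and excision for $\mathscr{S}\in\{\LC,\RC\}$, where $\partial$ does not preserve the class, but this is exactly what the definition of the relative groups in \eqref{eq:hg} is designed to accommodate. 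Granting the axioms, the uniqueness theorem of Eilenberg and Steenrod \cite{ES:1945,ES:1952} gives a natural isomorphism $\Hom_{\ast}^{\mathscr{S}}(\cdot;G)\cong\Hom_{\ast}^s(\cdot;G)$ on the category of finite polyhedral pairs (the coefficient groups agree, both being $G$ concentrated in degree $0$ on a point), and since $V$ is a compact neighborhood retract, Corollary \ref{co:sc} supplies a natural isomorphism $\Hom_{\ast}^s(V;G)\cong\cech_{\ast}(V;G)$. Composing these yields $h_V$, which is natural with respect to continuous self-maps of $V$.

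It then remains to feed this into Proposition \ref{prop:homeq} with $\Hom_{\ast}=\Hom_{\ast}^{\mathscr{S}}(\cdot;G)$ and $\Hom_{\ast}'=\cech_{\ast}(\cdot;G)$: both satisfy axioms $1$ and $2$, since the induced homomorphisms respect identities and composition, and $r:=i_{X,V}\circ\rho\vert_V$ is a continuous self-map of $V$, so naturality of $h_V$ gives $h_V\circ\Hom_{\ast}^{\mathscr{S}}(r)=\cech_{\ast}(r)\circ h_V$. Proposition \ref{prop:homeq} then delivers $\Hom_{\ast}^{\mathscr{S}}(X;G)\cong\cech_{\ast}(X;G)$.

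The main obstacle is the verification that $\Hom_{\ast}^{\mathscr{S}}(\cdot;G)$ satisfies the Eilenberg--Steenrod axioms when $\mathscr{S}\in\{\LC,\RC\}$ --- or, equivalently for our purposes, the direct statement that $\Hom_{\ast}^{\mathscr{S}}(V;G)\cong\cech_{\ast}(V;G)$ naturally for every finite polyhedron $V$. For $\FC$ and $\NC$ this follows from the deformation theorem of \cite{Fleming:1966}: every flat, or normal, chain carried by $V$ is flat-homologous to a polyhedral chain in a fixed grid, so the homology is computed by the simplicial chain complex of $V$ with coefficients in $G$. For $\RC$ and $\LC$ one must additionally keep the boundaries within the class while deforming; here Theorem \ref{thm:pca}, Lemma \ref{le:fap} and Lemma \ref{le:fnsm} provide precisely the size-controlled deformations and fillings needed, and this bookkeeping is the genuinely technical part of the proof.
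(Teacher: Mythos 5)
Your overall architecture (polyhedral neighborhood $V$, identify the chain homologies with $\cech_{\ast}$ on $V$, descend to $X$ via Proposition \ref{prop:homeq}) is reasonable, but the pivotal step is asserted rather than proved, and that assertion is not covered by anything you cite. You need, for every $\mathscr{S}\in\{\LC,\NC,\RC,\FC\}$ and every complete normed abelian group $G$, either the seven Eilenberg--Steenrod axioms for $\Hom_{\ast}^{\mathscr{S}}(\cdot\,;G)$ on finite polyhedral pairs or, equivalently for your purposes, a natural isomorphism $\Hom_{\ast}^{\mathscr{S}}(V;G)\cong\cech_{\ast}(V;G)$ for finite polyhedra $V$. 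Theorem 5.11 of \cite{FF:1960} and Proposition 3.7 of \cite{Pauw:2007} concern integral currents, integral flat chains and normal currents (essentially $\mathbb{Z}$ coefficients); saying the $G$-coefficient versions for $\LC$, $\NC$, $\RC$, $\FC$ follow ``by the same reasoning'' is exactly the point at issue, not a proof. In particular, for $\LC$ and $\RC$ the boundary operator leaves the class, so exactness of the pair sequence and excision for the support-defined relative groups in \eqref{eq:hg} are genuinely delicate for general complete normed $G$; your remark that the definition is ``designed to accommodate'' this, and your closing paragraph conceding that the bookkeeping via Theorem \ref{thm:pca}, Lemma \ref{le:fap} and Lemma \ref{le:fnsm} is ``the genuinely technical part,'' amount to deferring the heart of the proposition. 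There is also a categorical wrinkle you pass over: these chain theories are functorial only for Lipschitz maps, so invoking the uniqueness theorem of \cite{ES:1945} requires setting it up on the Lipschitz (or PL) category and then extending naturality of $h_V$ to the Lipschitz map $i_{X,V}\circ\rho\vert_V$ by simplicial approximation and the homotopy axiom --- doable, but again unaddressed.

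It is worth noting that the paper's proof is organized precisely to avoid verifying any Eilenberg--Steenrod axioms for the chain theories. It first shows, directly on $X$ (no polyhedra needed), that the inclusion $\LC_{\ast}(X;G)\subseteq\mathscr{S}_{\ast}(X;G)$ induces an isomorphism $\Hom_{\ast}^{\LC}(X;G)\to\Hom_{\ast}^{\mathscr{S}}(X;G)$: surjectivity comes from approximating any cycle $T\in\mathcal{Z}_{\ast}^{\mathscr{S}}(X;G)$ by Lipschitz cycles $T_m$ in $X$ with $\fn(T-T_m)\to0$ (Lemma \ref{le:appl}) and then filling $T-T_m$ inside $X$ by Lemma \ref{le:fnsm}, and injectivity is handled by the same two lemmas. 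Only then does it compare $\Hom_{\ast}^{\LC}$ with singular homology, on a polyhedral neighborhood $V$, by quoting Proposition 1.3 of \cite{Yamaguchi:1997}, and finally it descends to $X$ and passes to $\cech_{\ast}$ via Proposition \ref{prop:homeq} and Corollary \ref{co:sc}. If you want to salvage your outline, the most economical fix is to replace your ``verify the axioms'' step by this reduction to $\Hom^{\LC}$, i.e.\ prove the single comparison $\Hom_{\ast}^{\LC}\cong\Hom_{\ast}^{\mathscr{S}}$ with the approximation and filling lemmas and then lean on the known comparison of Lipschitz and singular chains on polyhedra, rather than attempting excision and exactness for four flat-chain theories with general coefficients.
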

\begin{proof}
	For any set $Y\subseteq \mathbb{R}^n$ and $\mathscr{S}\in \{\NC,\RC,\FC\}$, 
	we see that $\LC_{\ast}(Y;G)\subseteq \mathscr{S}_{\ast}(Y,G)$, thus 
	there is a natural homomorphism $h_{Y}=h_{Y}^{\mathscr{S}}:\Hom_{\ast}^{\LC}
	(Y;G)\to\Hom_{\ast}^{\mathscr{S}}(Y;G)$ defined by
	\[
		h_Y(T+\mathcal{B}_{\ast}^{\LC}(Y;G))=T+\mathcal{B}_{\ast}^{\FC}(Y;G),\
		\forall T\in \mathcal{Z}_{\ast}^{\LC}(Y;G).
	\]
	We will show that $h_X$ is an isomorphism. Indeed, for any $T_1,T_2\in
	\mathcal{Z}_{\ast}^{\LC}(X;G)$, if $T_1-T_2\not\in
	\mathcal{B}_{\ast}^{\LC}(X;G)$, then $T_1-T_2\not\in
	\mathcal{B}_{\ast}^{\FC}(X;G)$, thus $h_X$ is a monomorphism. 
	For any $T\in \mathcal{Z}_{\ast}^{\mathscr{S}}(X;G)$, by Lemma
	\ref{le:appl}, we can find a sequence of Lipschitz chains $\{T_m\}\subseteq
	\LC_{\ast}(X;G)$ such that $T_m\ora{\fn} T$ and $\partial T_m=0$. By Lemma
	\ref{le:fnsm}, we can find $R_m\in \mathscr{S}_{\ast+1}(X;G)$ such that
	$T-T_m=\partial R_m$. Thus $T+\mathcal{B}_{\ast}^{\mathscr{S}}(X;G)=
	T_m+\mathcal{B}_{\ast}^{\mathscr{S}}(X;G)$, and 
	\[
		h_X(T_m+\mathcal{B}_{\ast}^{\LC}(X;G))=
		T+\mathcal{B}_{\ast}^{\mathscr{S}}(X;G).
	\]
	Hence, $h_X$ is an epimorphism, and we get that $h_X$ is an isomorphism.

	For any $Y\subseteq \mathbb{R}^n$, we denote by
	$C_{\ast}(Y;G)$ the group of singular chains, then $\LC_{\ast}(X;G)\subseteq
	C_{\ast}(X;G)$, so there is a natural 
	homomorphism $\varphi_V:\Hom_{\ast}^{\LC}(V;G)\to\Hom_{\ast}^{s}(V;G)$.
	Since $X$ is a Lipschitz neighborhood retract, we can find open set
	$U\supseteq X$ and Lipschitz mapping $\rho:U\to X$ such that
	$\rho\vert_X=\id_X$. We take a closed set $V$ which is a union of finite
	number of polyhedra and satisfies that $X\subseteq V\subseteq U$. Then,
	by Proposition 1.3 in \cite{Yamaguchi:1997}, we see that the natural 
	homomorphism $\varphi_V:\Hom_{\ast}^{\LC}(V;G)\to\Hom_{\ast}^{s}(V;G)$ is 
	an isomorphism. It is easy to see that, for any Lipschitz mapping 
	$f:X\to Z$, we have that $\varphi_Z\circ
	\Hom_{\ast}^{\LC}(f)=\Hom_{\ast}^{s}(f)\circ \varphi_X$. By an analogous
	argument as in the proof of Corollary \ref{co:sc}, we may get that
	$\varphi_X$ is an isomorphism. Put $\Psi_{\ast}^{\mathscr{S}}=v_{\ast}^{X}
	\circ \varphi_X\circ (h_{X}^{\mathscr{S}})^{-1}$. Then
	$\Psi_{\ast}^{\mathscr{S}}:\Hom_{\ast}^{\mathscr{S}}(X;G)\to
	\cech_{\ast}(X;G)$ is an isomorphism.
\end{proof}

\section{Infimum of Plateau's problem}
\begin{proof}[Proof of Theorem \ref{thm:hequ}]
	We assume $F\leq b<+\infty$ and $\lambda\leq b<+\infty$.
	Let $U_0$ be an open set which contains $B_0$, and $\rho:U_0\to B_0$ be a 
	Lipschitz retraction. We take $\varepsilon>0$ such that $B_0+\oball(0,10
	\varepsilon)\subseteq U_0$, and take $B_{\varepsilon}$ to be a union of 
	a finite number of polyhedra such that 
	\[
		B_0+\oball(0,2 \varepsilon)\subseteq
		B_{\varepsilon}\subseteq B_0+\oball(0,3 \varepsilon).
	\]

	For any $E\in \ch(B_0,G,L)$, we choose $R_0$ large enough such that
	$E\subseteq \cball(0,R_0-1)$. Applying Theorem
	\ref{thm:polyapp} with $\varepsilon_1=\varepsilon_2=\varepsilon$,
	$U=\oball(0,R_0)\setminus B_0$, $X=E$,
	$K=\cball(0,R_0-1)\setminus (B_0+\oball(0,\varepsilon)$,
	we get a Lipschitz mapping $\varphi_{\varepsilon} :\mathbb{R}^n\to 
	\mathbb{R}^n$ such that \ref{polyapp1}, \ref{polyapp2}, \ref{polyapp3} and
	\ref{polyapp4} hold. We put $E_{\varepsilon}=\varphi_{\varepsilon}(E)$, and 
	$\mathcal{O}_{\varepsilon}=\cball(0,R_0)\setminus (B_0+\oball(0,4
	\varepsilon))$. Let $\bar{\psi}_{\varepsilon}:\mathbb{R}^n\to \mathbb{R}^n$
	be a Lipschitz extension of $\psi_{\varepsilon}$ with  
	$ \Lip(\bar{\psi}_{\varepsilon})= \Lip(\psi_{\varepsilon})$, where
	$\psi_{\varepsilon}:\mathcal{O}_{\varepsilon}\cup B_{\varepsilon}\to
	\mathbb{R}^n$ is defined by
	\[
		\psi_{\varepsilon}(x)=\begin{cases}
			x,&x\in \mathcal{O}_{\varepsilon},\\
			\rho(x),&x\in B_{\varepsilon}.
		\end{cases}
	\]
	For each $x\in \mathcal{O}_{\varepsilon}$, $y\in  B_{\varepsilon}$, we choose
	$y_0\in B_0$, such that$ \left|y-y_0 \right|=\dist(y,B_0)\leq 3\varepsilon $,
	then we have
	\[
		\begin{aligned}
			\left|\psi_{\varepsilon}(x)-\psi_{\varepsilon}(y)\right|&=	
			\left|x-\rho(y)\right| \leq \left|x-y \right|+\left|y-y_0 \right|+
			\left|y_0-\rho(y)\right| \\
			&\leq \left|x-y \right|+3\varepsilon+3\Lip(\rho)\varepsilon 
			\leq \left|x-y \right|+3(1+\Lip(\rho)) \left|x-y \right| \\
			&\leq (4+3\Lip(\rho)) \left|x-y \right|.
		\end{aligned}	
	\]
	Thus $\Lip(\bar{\psi}_{\varepsilon})=\Lip(\psi_{\varepsilon})\leq 4+3\Lip(\rho))$.

	We will show that $B_{\varepsilon}\cup E_{\varepsilon}$ spans $L$ in Homology
	$\Hom_{\ast}$. Indeed, we define the inclusion mappings
	\[
		i_1:B_0\hookrightarrow E,\ i_2:E_{\varepsilon}	\hookrightarrow
		B_{\varepsilon}\cup E_{\varepsilon}.
	\]
	Since $\varphi_{\varepsilon}$ is homotopic to identity, we can get
	$ i_2\circ\varphi_{\varepsilon}\circ i_1$ is homotopic to $ i_{B_0,
	B_{\varepsilon}\cup E_{\varepsilon}}$, thus
	\[
		\Hom_{\ast}(i_2\circ\varphi_{\varepsilon}\circ i_1)=
		\Hom_{\ast}(i_{B_0, B_{\varepsilon}\cup E_{\varepsilon}}),
	\]
	and we get that 
	\[
		B_{\varepsilon}\cup E_{\varepsilon}\in  \ch(B_0,G,L).
	\]
	Since $B_{\varepsilon}\cup E_{\varepsilon}$ is a union of finite number of
	polyhedra, we have that 
	\[
		B_{\varepsilon}\cup E_{\varepsilon}\in  \cc(B_0,G,L).
	\]
	Since $\bar{\psi}_{\varepsilon}|_{B_0}=\rho|_{B_0}=\id_{B_0}$, we have that
	$\bar{\psi}_{\varepsilon}\circ  i_{B_0,  B_{\varepsilon}\cup
	E_{\varepsilon}}= i_{B_0, \bar{\psi}_{\varepsilon}(B_{\varepsilon}\cup
	E_{\varepsilon})}$, and 
	\[
		\Hom_{\ast}(\bar{\psi}_{\varepsilon}\circ  i_{B_0,  B_{\varepsilon}\cup
		E_{\varepsilon}})=\Hom_{\ast}(i_{B_0,
		\bar{\psi}_{\varepsilon}(B_{\varepsilon}\cup E_{\varepsilon})}),
	\]
	thus
	\[
		\bar{\psi}_{\varepsilon}(B_{\varepsilon}\cup E_{\varepsilon})\in  \cc(B_0,G,L).
	\]
	Since $ B_{\varepsilon}\cup E_{\varepsilon}=(E_{\varepsilon}\cap
	\mathcal{O}_{\varepsilon})\cup((B_{\varepsilon}\cup
	E_{\varepsilon})\setminus\mathcal{O}_{\varepsilon})$ and 
	\[
		\bar{\psi}_{\varepsilon}((B_{\varepsilon}\cup E_{\varepsilon})\setminus
		\mathcal{O}_{\varepsilon})=\bar{\psi}_{\varepsilon}(B_{\varepsilon})
		\cup\bar{\psi}_{\varepsilon}( E_{\varepsilon}\setminus
		\mathcal{O}_{\varepsilon}) =B_0\cup \bar{\psi}_{\varepsilon}
		(E_{\varepsilon}\setminus\mathcal{O}_{\varepsilon}) =\bar{\psi}_{
		\varepsilon}(E_{\varepsilon}\setminus\mathcal{O}_{\varepsilon}),
	\]
	we have that 
	\[
		\begin{aligned}
			\Phi_{F,\lambda}((\bar{\psi}_{\varepsilon}(B_{\varepsilon}\cup
			E_{\varepsilon}))\setminus B_0)&\leq \Phi_{F,\lambda}
			(\bar{\psi}_{\varepsilon}(E_{\varepsilon}\cap\mathcal{O}_{\varepsilon}))
			+\Phi_{F,\lambda}(\bar{\psi}_{\varepsilon}((B_{\varepsilon}\cup
			E_{\varepsilon})\setminus\mathcal{O}_{\varepsilon})\setminus B_0) \\
			&\leq \Phi_{F,\lambda}(E_{\varepsilon}\cap \mathcal{O}_{\varepsilon})+
			\Phi_{F,\lambda}(\bar{\psi}_{\varepsilon}(E_{\varepsilon}\setminus
			\mathcal{O}_{\varepsilon})\setminus B_0) \\
			&\leq \Phi_{F,\lambda}(\varphi_{\varepsilon}(E\setminus B_0))+
			b \Lip(\bar{\psi}_{\varepsilon})^d\HM^d((E_{\varepsilon}\setminus
			\mathcal{O}_{\varepsilon})\setminus B_0) \\
			&\leq \Phi_{F,\lambda}(E\setminus B_0)+b\Lip(\bar{\psi}_{\varepsilon} )^d
			\HM^d(\varphi_{\varepsilon}(E\cap (B_0+\oball(0,5
			\varepsilon)))\setminus B_0) +\varepsilon \\
			&\leq \Phi_{F,\lambda} (E\setminus B_0)+bC\Lip(\psi_{\varepsilon})^d
			\HM^d((E\cap(B_0+\oball(0,5\varepsilon)))\setminus B_0)+\varepsilon \\
			&\leq \Phi_{F,\lambda} (E\setminus
			B_0)+C_2\HM^d((E\cap(B_0+\oball(0,5\varepsilon)))\setminus
			B_0)+\varepsilon,
		\end{aligned}
	\]
	where $C_2=bC\cdot(4+3\Lip(\rho))^d$ and $C$ is the constant in Theorem
	\ref{thm:polyapp}. But we see that $\HM^d(E\cap (B_0+\oball(0,5 \varepsilon))
	\setminus B_0)\to 0$ as $\varepsilon\to 0$, we get so that 
	\[
		\inf\{\Phi_{F,\lambda}(E\setminus B_0):E\in \cc(B_0,G,L)\}\leq 
		\inf\{\Phi_{F,\lambda}(E\setminus B_0):E\in \ch(B_0,G,L)\}.
	\]
	By a similar argument as above, we will obtain the reverse inequality 
	\[
		\inf\{\Phi_{F,\lambda}(E\setminus B_0):E\in \ch(B_0,G,L)\}\leq 
		\inf\{\Phi_{F,\lambda}(E\setminus B_0):E\in \cc(B_0,G,L)\}.
	\]

\end{proof}

\begin{proof}[Proof of Theorem \ref{thm:scequ}]
	By Lemma \ref{le:cnon}, we only need to consider the case $\mathscr{S}\in
	\{\LC,\NC,\RC,\FC\}$. By Proposition \ref{prop:flatsize}, we get that 
	\[
		\inf\{\size(S): \partial S=T,S\in\mathscr{S}_{d}(\mathbb{R}^{n};G)\}
		=\inf\{\HM^{d}(E): E\in \mathscr{C}_{\mathscr{S}}(B_0,G,T)\}.
	\]
	By Theorem \ref{thm:hequ} and Proposition \ref{prop:hfc}, we get that 
	\[
		\inf\{\HM^{d}(E): E\in \mathscr{C}_{\mathscr{S}}(B_0,G,T)\}=
		\inf\{\HM^{d}(E): E\in \cc(B_0,G,T)\}.
	\]
	Thus \eqref{eq:scequ} holds.
\end{proof}
\begin{proof}[Proof of Corollary \ref{co:scequ}]
	It follows from Theorem \ref{thm:scequ} and Lemma
	\ref{le:isoR} and Corollary \ref{co:isoI}.	
\end{proof}
\begin{corollary}
	Let $B_0\subseteq \mathbb{R}^{n}$ be a compact $C^{1,\alpha}$ submanifold of
	dimension $d-1$ without boundary, $0<\alpha\leq 1$. Suppose that $L$ is a 
	subgroup of $\cech_{d-1}(B_0;\mathbb{Z})$, $E$ is a \v Cech minimizer. If
	$\Theta_E^{d}(x)=1/2$ for any $x\in B_0$,
	then $E$ is a minimizer with the singular homology boundary conditions,
	and 
	\begin{equation}\label{eq:half}
		\HM^d(E)=\inf\{\size(T):T\in \IC_{d}(\mathbb{R}^n), [\partial T]\in L\}.
	\end{equation}
\end{corollary}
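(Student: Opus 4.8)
The plan is to read the statement off Theorem~\ref{thm:hequ}, Corollary~\ref{co:scequ} and the boundary regularity of $E$ forced by the density hypothesis. First, singular homology $\Hom_\ast^s$ satisfies the homotopy axiom and agrees with $\cech_\ast$ on finite polyhedra, so Theorem~\ref{thm:hequ} applies to it; taking $F\equiv 1$ and $\lambda\equiv 1$, so that $\Phi_{F,\lambda}(E'\setminus B_0)=\HM^d(E')$ (using $\HM^d(B_0)=0$, as $\dim B_0=d-1$), it gives
\[
\inf\{\HM^d(E'):E'\in\mathscr{C}_{\Hom^s}(B_0,L,\mathbb{Z})\}=\inf\{\HM^d(E'):E'\in\cc(B_0,L,\mathbb{Z})\}=\HM^d(E),
\]
the last equality because $E$ is a \v Cech minimizer. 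It remains to see $E\in\mathscr{C}_{\Hom^s}(B_0,L,\mathbb{Z})$: being a Hausdorff-measure minimizer with $C^{1,\alpha}$ boundary datum, $E$ is a rectifiable Almgren-type minimal set and, by interior and boundary regularity, a compact Lipschitz neighbourhood retract, so Corollary~\ref{co:sc} gives a natural isomorphism $\Hom_\ast^s(E;\mathbb{Z})\cong\cech_\ast(E;\mathbb{Z})$ compatible with $i_{B_0,E}$; since $B_0$ is a manifold, $L\subseteq\cech_{d-1}(B_0;\mathbb{Z})$ corresponds to a subgroup of $\Hom_{d-1}^s(B_0;\mathbb{Z})$, and $\cech_{d-1}(i_{B_0,E})(L)=0$ then forces the analogous vanishing in singular homology, i.e.\ $E$ spans $L$ there. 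Hence $E$ is a minimizer for the singular homology boundary conditions.

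For the equality \eqref{eq:half}, read the right side as the infimum of $\size(T)$ over $T\in\IC_d(\mathbb{R}^n)$ with $\spt\partial T\subseteq B_0$ whose carrier together with $B_0$ spans $L$. For such a $T$, restricting the infimum (harmlessly, by the deformation and existence theory) to currents whose carriers are near-minimal sets, we may assume $\size(T)=\HM^d(\spt T)$; since $\spt T\cup B_0\in\cc(B_0,L,\mathbb{Z})$, \v Cech minimality of $E$ gives $\size(T)=\HM^d(\spt T\cup B_0)\ge\HM^d(E)$, hence $\inf\{\size(T):\dots\}\ge\HM^d(E)$. (For a generator-by-generator decomposition of $\partial T$ one may instead invoke Corollary~\ref{co:scequ} directly.)

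For the reverse inequality the hypothesis $\Theta_E^d\equiv 1/2$ on $B_0$ is essential. As $B_0$ is a closed $C^{1,\alpha}$ submanifold of codimension one and $1/2$ is exactly the density of a half-hyperplane bounded by a $(d-1)$-plane, Allard-type boundary regularity shows that in a neighbourhood of $B_0$ the set $E\cup B_0$ is a $C^{1,\alpha}$ embedded $d$-manifold with boundary precisely $B_0$, while off $B_0$, $E$ has the usual interior regularity. Choosing a coherent orientation of the regular part of $E$ --- possible near $B_0$ by the half-manifold structure, and in the interior provided $E$ carries no orientation-obstructing singular configuration (which one must know or reduce to) --- yields a multiplicity-one integral current $T_E$ with carrier $E$, $\spt\partial T_E\subseteq B_0$, $\spt T_E\cup B_0=E\cup B_0$ spanning $L$, and $\size(T_E)=\HM^d(E)$ since the multiplicity is one; hence $\inf\{\size(T):\dots\}\le\HM^d(E)$, which with the previous step yields \eqref{eq:half}. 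The main obstacle is exactly this last step: the boundary regularity of a Hausdorff-measure minimizer with density $1/2$ along a $C^{1,\alpha}$ codimension-one submanifold --- the reason for the $C^{1,\alpha}$ and $1/2$ hypotheses --- together with orienting the regular part of $E$ so that the associated integral current has boundary a cycle supported in $B_0$.
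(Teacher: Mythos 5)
Your first half (that $E$ is a minimizer for the singular homology boundary condition) follows essentially the paper's route: the same regularity input --- interior regularity of minimal sets away from $B_0$ plus Allard-type boundary regularity, which is exactly where the $C^{1,\alpha}$ and $\Theta_E^{d}\equiv 1/2$ hypotheses are used --- makes $E$ a compact neighborhood retract, after which one compares singular and \v Cech homology on $E$ and invokes Theorem \ref{thm:hequ}. One correction: the interior regularity is only biH\"older, so the retraction onto $E$ is H\"older, not Lipschitz; this is harmless because Corollary \ref{co:sc} (and the paper's retraction of a minimizing sequence into $E$) only needs a continuous retraction, but you should not claim $E$ is a Lipschitz neighborhood retract.

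Your treatment of \eqref{eq:half}, however, departs from the paper and the key step fails. For the inequality $\inf\{\size(T):\dots\}\leq \HM^d(E)$ you propose to orient the regular part of $E$ and produce a multiplicity-one integral current $T_E$ carried by $E$ with $\spt\partial T_E\subseteq B_0$ and $\size(T_E)=\HM^d(E)$. The density hypothesis only controls $E$ near $B_0$; in the interior a \v Cech (soap-film-like) minimizer generically has $Y$- and $T$-type singularities along which three or more sheets meet, and no assignment of multiplicity-one orientations makes the boundary contributions cancel along such a junction (nor need the regular part be orientable). This is precisely why Reifenberg minimizers need not carry integral currents of the same area, and why the theorems of this paper compare infima rather than exhibit a common competitor; the obstruction you flag as the ``main obstacle'' is real and cannot be removed under the stated hypotheses. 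The paper sidesteps the construction entirely: since $E$ spans every class in $L$, for any cycle $\sigma$ with $[\sigma]\in L$ one has $E\in\cc(B_0,\sigma,\mathbb{Z})$, so Corollary \ref{co:scequ} (i.e.\ Theorem \ref{thm:scequ}, proved by polyhedral deformation, no orientation of $E$ needed) gives $\inf\{\size(T):\partial T=\sigma\}\leq\HM^d(E)$ directly. Similarly, in the other direction your main argument is not justified: one cannot ``harmlessly'' assume $\size(T)=\HM^d(\spt T)$ (the support may have much larger, even infinite, measure than the size), and the claim $\spt T\cup B_0\in\cc(B_0,L,\mathbb{Z})$ is exactly the nontrivial content that Theorem \ref{thm:scequ} supplies and is false to assert without it; your parenthetical fallback --- apply Corollary \ref{co:scequ} with $\sigma=\partial T$ --- is the correct route and is what the paper does.
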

\begin{proof}
	Put $U=\mathbb{R}^n\setminus B_0$. Then $E\cap U$ is minimal in $U$. By
	Theorem 16.1 in \cite{David:2009}, for any $x\in E\cap U$, there is a ball
	$\oball(x,r)$ such that $E\cap \oball(x,r)$ is locally biH\"older equivalent 
	to a minimal cone in $\mathbb{R}^n$ at $x$. By the Allard-type boundary 
	regularity theorem \cite{Allard:1975, Bourni:2016}, we get that for any
	$x\in B_0$, there is a ball $\oball(x,r)$ such that $E\cap \oball(x,r)$ is
	$C^{1,\gamma}$ equivalent to a half $d$-plane for some $0<\gamma<\alpha$. 
	Thus $E$ is a local H\"older
	neighborhood retract. That is, there is an open set $U$ with $E\subseteq U$,
	and H\"older mapping $\rho:U\to E$ such that $\rho\vert_E=\id_E$. Let $E_m$
	be a minimizing sequence under the singular homology conditions such that 
	$E_m$ converges to $E$ in Hausdorff distance, then $E_m\subseteq U$ for $m$ 
	large enough, thus $\rho(E_m)\subseteq E$, and $\rho(E_m)$ are minimizers.
	Equality \eqref{eq:half} is easily follows from Corollary \ref{co:scequ}.
\end{proof}
\begin{proof}[Proof of Proposition \ref{prop:com}]
	Put $U=\mathbb{R}^3\setminus M$.
	By Proposition \ref{prop:fssb},
	\[
		\inf\{\size(T):[T]\in \Hom_2^{\IC}(\mathbb{R}^3;M),[\partial
		T]=\sigma\}=\inf\{\HM^2(F\setminus M):F\in \cc(M,\mathbb{Z},\sigma)\}.
	\]
	We take a sequence of compact sets $\{E_m\}\subseteq \cc(M,\mathbb{Z},\sigma)$ 
	such that $E_m\ora{\HD}E\in \cc(M,\mathbb{Z},\sigma)$, 
	\[
		\HM^2(E\setminus M)=\lim_{m\to \infty}\HM^2(E_m\setminus M)=
		\inf\{\HM^2(F\setminus M):F\in \cc(M,\mathbb{Z},\sigma)\},
	\]
	and there exists $T_m\in \IC_2(\mathbb{R}^3)$ satisfying $T_m\mr E_m=T_m$ and
	$[\partial T_m]=\sigma$. $E$ is a \v Cech minimizer, thus $E\cap U$ is minimal
	in $U$. By Jean Taylor's regularity theorem \cite{Taylor:1976}, we get that at 
	every point $x\in E\cap U$, $E\cap U$ is locally $C^{1,\gamma}$ diffeomorphic 
	to a minimal cone in $\mathbb{R}^3$. By Theorem 1.2 in \cite{Fang:2021}, we 
	get that at every point $x\in E\cap M$, $E$ is locally $C^{1,\gamma}$ 
	diffeomorphic to a minimal cone in the half space. Hence, $E$ is a
	Lipschitz neighborhood retract, and there exit minimizers which are contained
	in $E$.  
\end{proof}
\begin{bibdiv}
\begin{biblist}

\bib{Allard:1975}{article}{
      author={Allard, W.},
       title={On the first variation of a varifold: boundary behavior},
        date={1975},
     journal={Ann. of Math.},
       pages={418\ndash 446},
}

\bib{Almgren:1968}{article}{
      author={Almgren, F.},
       title={Existence and regularity almost everywhere of solutions to
  elliptic variational problems among surfaces of varying topological type and
  singularity structure},
        date={1968},
     journal={Ann. of Math.},
      volume={87},
      number={2},
       pages={321\ndash 391},
}

\bib{Bourni:2016}{article}{
      author={Bourni, Theodora},
       title={Allard-type boundary regularity for ${C}^{1,\alpha}$ boundaries},
        date={2016},
     journal={Advances in Calculus of Variations},
      volume={9},
      number={2},
       pages={143\ndash 161},
}

\bib{David:2009}{article}{
      author={David, G.},
       title={H\"older regularity of two-dimensional almost-minimal sets in
  {$\mathbb{R}^n$}},
        date={2009},
     journal={Annales de la faculté des sciences de Toulouse},
      volume={18},
      number={1},
       pages={65\ndash 246},
}

\bib{David:2012}{article}{
      author={David, G.},
       title={Should we solve {Plateau's} problem again?},
        date={2012},
     journal={Advances in Analysis: The Legacy of Elias M. Stein. Edited by C.
  Fefferman, A. D. Ionescu, D. H. Phong, and S. Wainger, Princeton Mathematical
  Series},
      volume={50},
       pages={108\ndash 145},
}

\bib{Douglas:1931}{article}{
      author={Douglas, Jesse},
       title={Solutions of the problem of {Plateau}},
        date={1931},
     journal={Trans. Amer. Math. Soc.},
      volume={33},
       pages={263\ndash 321},
}

\bib{ES:1945}{article}{
      author={Eilenberg, S.},
      author={Steenrod, N.},
       title={Axiomatic approach to homology theory},
        date={1945},
        ISSN={0027-8424},
     journal={Proc. Nat. Acad. Sci. U. S. A.},
      volume={31},
       pages={117\ndash 120},
}

\bib{ES:1952}{book}{
      author={Eilenberg, S.},
      author={Steenrod, N.},
       title={Foundations of algebraic topology},
   publisher={Princeton},
        date={1952},
}

\bib{Fang:2013}{article}{
      author={Fang, Yangqin},
       title={Existence of minimizers for the {Reifenberg Plateau} problem},
        date={2016},
     journal={Ann. Scuola Norm. Sup. Pisa Cl. Sci.},
      volume={16},
      number={5},
       pages={817\ndash 844},
}

\bib{Fang:2021}{article}{
      author={Fang, Yangqin},
       title={Local ${C}^{1,\beta}$-regularity at the boundary of two
  dimensional sliding almost minimal sets in $\mathbb{R}^3$},
        date={2021},
     journal={Trans. Amer. Math. Soc. Ser. B},
      volume={8},
       pages={130\ndash 189},
}

\bib{FS:2018}{article}{
      author={Fang, Yangqin},
      author={Kolasi{\'{n}}ski, S{\l}awomir},
       title={Existence of solutions to a general geometric elliptic
  variational problem},
        date={2018},
     journal={Calculus of Variations and Partial Differential Equations
  (accepted)},
}

\bib{Federer:1969}{book}{
      author={Federer, H.},
       title={Geometric measure theory},
   publisher={Springer-Verlag, New York},
        date={1969},
}

\bib{FF:1960}{article}{
      author={Federer, H.},
      author={Fleming, W.},
       title={Normal and integral currents},
        date={1960},
     journal={Ann. of Math.},
      volume={72},
      number={3},
       pages={458\ndash 520},
}

\bib{Feuvrier:2008}{thesis}{
      author={Feuvrier, V.},
       title={Un r\'esultat d'existence pour les ensembles minimaux par
  optimisation sur des grilles poly\'edrales},
        type={Ph.D. Thesis},
        date={2008},
}

\bib{Feuvrier:2012}{article}{
      author={Feuvrier, V.},
       title={Remplissage de l'espace euclidien par des complexes
  poly\'edriques d'orientation impos\'ee et de rotondit\'e uniforme},
        date={2012},
     journal={Bulletin de la Soci\'et\'e Math\'ematique de France},
      volume={140},
      number={2},
       pages={163\ndash 235},
         url={http://www.numdam.org/item/BSMF_2012__140_2_163_0},
}

\bib{Fleming:1966}{article}{
      author={Fleming, W.},
       title={Flat chains over a finite coefficient group},
        date={1966},
     journal={Trans. Amer. Math. Soc.},
      volume={121},
       pages={160\ndash 186},
}

\bib{Mardesic:1959}{article}{
      author={Marde\v{s}i\'{c}, Sibe},
       title={Comparison of singular and \v{C}ech homology in locally connected
  spaces.},
        date={1959},
     journal={Michigan Math. J.},
      volume={6},
       pages={151\ndash 166},
}

\bib{Mattila:1995}{book}{
      author={Mattila, P.},
       title={Geometry of sets and measures in euclidean spaces: Fractals and
  rectifiability},
      series={Cambridge Studies in Advanced Mathematics},
   publisher={Cambridge U. Press},
        date={1995},
      volume={44},
}

\bib{Milnor:1962}{article}{
      author={Milnor, J.},
       title={On axiomatic homology theory},
        date={1962},
     journal={Pacific J. Math.},
      volume={12},
      number={1},
       pages={337\ndash 341},
}

\bib{Morgan:1989}{article}{
      author={Morgan, F.},
       title={Size-minimizing rectifiable currents},
        date={1989},
     journal={Invent. Math.},
      volume={96},
      number={2},
       pages={333\ndash 348},
}

\bib{Osserman:1970}{article}{
      author={Osserman, R.},
       title={A proof of the regularity everywhere of the classical solution to
  {Plateau's} problem},
        date={1970},
        ISSN={0003486X},
     journal={Ann. of Math.},
      volume={91},
      number={3},
       pages={550\ndash 569},
         url={http://www.jstor.org/stable/1970637},
}

\bib{Pauw:2007}{article}{
      author={Pauw, T.~De},
       title={Comparing homologies: \u{C}ech's theory, singular chains,
  integral flat chains and integral currents},
        date={2007},
     journal={Rev. Mat. Iberoamericana},
      volume={23},
}

\bib{Pauw:2009}{article}{
      author={Pauw, T.~De},
       title={Size minimizing surfaces},
        date={2009},
     journal={Ann. Sci. Ecole Norm. Sup.},
      volume={42},
      number={1},
       pages={37\ndash 101},
}

\bib{Rado:1930}{article}{
      author={Rad{\'o}, T.},
       title={The problem of the least area and the problem of {Plateau}},
        date={1930},
     journal={Math. Z.},
      volume={32},
      number={1},
       pages={763\ndash 796},
}

\bib{Reifenberg:1960}{article}{
      author={Reifenberg, E.~R.},
       title={{Solution of the Plateau problem for $m$-dimensional surfaces of
  varying topological type}},
        date={1960},
     journal={Acta Mathematica},
      volume={104},
       pages={1 \ndash  92},
         url={https://doi.org/10.1007/BF02547186},
}

\bib{Taylor:1976}{article}{
      author={Taylor, J.~E.},
       title={{The structure of singularities in soap-bubble-like and
  soap-film-like minimal surfaces}},
        date={1976},
     journal={Ann. of Math},
      volume={103},
       pages={489\ndash 539},
}

\bib{White:1999:Acta}{article}{
      author={White, B.},
       title={The deformation theorem for flat chains},
        date={1999},
     journal={Acta Math.},
      volume={183},
       pages={255\ndash 271},
}

\bib{White:1999:Ann}{article}{
      author={White, B.},
       title={Rectifiability of flat chains},
        date={1999},
     journal={Ann. of Math.},
      volume={150},
      number={1},
       pages={165\ndash 184},
}

\bib{Yamaguchi:1997}{article}{
      author={Yamaguchi, Takao},
       title={Simplicial volumes of alexandrov spaces},
        date={1997},
     journal={Kyushu Journal of Mathematics},
      volume={51},
      number={2},
       pages={273\ndash 296},
}

\end{biblist}
\end{bibdiv}

\bigskip
\footnotesize

\textsc{Yangqin FANG, School of Mathematics and Statistics,
Huazhong University of Science and Technology,
430074, Wuhan, P.R. China}\par\nopagebreak
\textit{E-mail address}: \texttt{yangqinfang@hust.edu.cn}

\medskip

\textsc{Vincent FEUVRIER, Institut de Math\'ematiques de Toulouse,
Universit\'e Paul Sabatier de Toulouse, 118 Route de Narbonne
31400, Toulouse, France}\par\nopagebreak
\textit{E-mail address}: \texttt{vincent.feuvrier@math.univ-toulouse.fr}

\medskip

\textsc{Chunyan LIU, School of Mathematics and Statistics,
Huazhong University of Science and Technology,
430074, Wuhan, P.R. China}\par\nopagebreak
\textit{E-mail address}: \texttt{chunyanliu@hust.edu.cn}

\end{document}